\newcommand{\E}{\mathcal{E}}
\newcommand{\F}{\mathcal F}
\newcommand{\X}{\mathcal{X}}
\newcommand{\R}{\mathbb{R}}
\newcommand{\e}{\varepsilon}
\newcommand{\s}{\sigma}
\newcommand{\loc}{{\rm loc}}
\newcommand{\spt}{{\rm spt}}
\newcommand{\pa}{\partial}
\newcommand{\cc}{\subset\!\subset}
\newcommand{\cl}{\mathrm{cl}\,}
\newcommand{\p}{\mathbf{p}}
\newcommand{\m}{\mathbf{m}}
\newcommand{\var}{\mathbf{var}\,}
\newcommand{\ehn}{\overset{\hn}{=}}
\newcommand{\ehone}{\overset{\mathcal{H}^1}{=}}
\newcommand{\shk}{\overset{\mathcal{H}^k}{\subset}}
\newcommand{\shn}{\overset{\mathcal{H}^{n-1}}{\subset}}
\newcommand{\xbub}{\X_{\rm bub}^m}
\newcommand{\rn}{\mathbb{R}^{n}}
\newcommand\restr[2]{{
  \left.\kern-\nulldelimiterspace 
  #1 
  \right|_{#2} 
  }}
\newcommand{\one}{{\scriptscriptstyle{(1)}}}
\newcommand{\zero}{{\scriptscriptstyle{(0)}}}
\newcommand{\half}{{\scriptscriptstyle{(1/2)}}}
\newcommand{\ste}{S}
\newcommand{\ovx}{{\overline{x}}}
\newcommand{\oner}{{\scriptscriptstyle{(1)_{\mathbb{R}}}}}
\newcommand{\zeror}{{\scriptscriptstyle{(0)_{\mathbb{R}}}}}
\newcommand{\pastr}{\pa^*_{\mathbb{R}}}
\newcommand{\hone}{\mathcal{H}^1}
\newcommand{\onerk}{{\scriptscriptstyle{(1)_{\mathbb{R}^k}}}}
\newcommand{\zerork}{{\scriptscriptstyle{(0)_{\mathbb{R}^k}}}}
\newcommand{\pastrk}{\pa^*_{\mathbb{R}^k}}
\newcommand{\pcprime}{P_{{\bf c}'}}
\newcommand{\mres}{\mathbin{\vrule height 1.6ex depth 0pt width 
0.13ex\vrule height 0.13ex depth 0pt width 1.3ex}}
\theoremstyle{plain}
\newtheorem{theorem}{Theorem}[section]
\newtheorem{lemma}[theorem]{Lemma}
\newtheorem{corollary}[theorem]{Corollary}
\newtheorem*{theorem*}{Theorem}
\newtheorem*{corollary*}{Corollary}
\theoremstyle{definition}
\newtheorem{definition}[theorem]{Definition}
\newtheorem{remark}[theorem]{Remark}
\newtheorem*{notation*}{Notation}
\numberwithin{equation}{section}
\numberwithin{figure}{section}
\renewcommand{\m}{\mathbf{m}}
\newcommand{\hn}{\mathcal{H}^{n-1}}
\newcommand{\xl}{\X_{\rm lens}}
\newcommand{\pc}{P_{\bf c}}
\newcommand{\rl}{r_{\textup{lens}}}
\newcommand{\ee}{\mathbf{e}}
\title{An Infinite Double Bubble Theorem}
\subjclass[2010]{49Q20, 49Q05}
\author{Lia Bronsard}
\address{Department of Mathematics \& Statistics, McMaster University, Hamilton ON L8S 4L8, Canada}
\email{bronsard@mcmaster.ca}
\author{Michael Novack}
\address{Department of Mathematical Sciences, Carnegie Mellon University, Wean Hall 6113, Pittsburgh, PA 15213, United States of America}
\email{mnovack@andrew.cmu.edu}
\begin{document}

\begin{abstract} 
The classical double bubble theorem characterizes the minimizing partitions of $\rn$ into three chambers, two of which have prescribed finite volume. In this paper we prove a variant of the double bubble theorem in which two of the chambers have infinite volume. Such a configuration is an example of a {\it(1,2)-cluster}, or a partition of $\rn$ into three chambers, two of which have infinite volume and only one of which has finite volume \cite{AlaBroVri23}. A $(1,2)$-cluster is locally minimizing with respect to a family of weights $\{c_{jk}\}$ if for any $B_r(0)$, it minimizes the interfacial energy $\sum_{j<k} c_{jk} \hn(\pa \X(j) \cap \pa\X(k) \cap B_r(0))$ among all variations with compact support in $B_r(0)$ which preserve the volume of $\X(1)$. For $(1,2)$ clusters, the analogue of the weighted double bubble is the {\it weighted lens cluster}, and we show that it is locally minimizing. Furthermore,
under a symmetry assumption on $\{c_{jk}\}$ that includes the case of equal weights, the weighted lens cluster is the unique local minimizer in $\mathbb{R}^n$ for $n\leq 7$, with the same uniqueness holding in $\mathbb{R}^n$ for $n\geq 8$ under a natural growth assumption. We also obtain a closure theorem for locally minimizing $(N,2)$-clusters.


\end{abstract}

\maketitle

\setcounter{tocdepth}{2}


\section{Introduction}\label{sec:intro}

The classical cluster problem in $\rn$ is to find the configuration of $N$ regions of prescribed finite volumes and an exterior region that minimizes the total area of the interfaces between regions \cite{Mor16}. Variants include the immiscible fluid problem, in which the interfaces between pairs of regions are weighted by coefficients that depend on the pair. The existence of minimizers for a general class of problems of this type has been proved by Almgren in \cite{Alm76}. This existence opens the door to the analysis and possible characterization of their shape, an issue which has been extensively studied but still presents many interesting open questions.

\medskip

When $N \leq n+1$ and the weights on the interfaces are equal, the canonical configuration of $N$ bounded chambers and an exterior region known as the {\it standard $N$-bubble} is a natural candidate for minimality \cite[Problem 2]{SulMor96}. The $N$-bubble conjecture states that the standard $N$-bubble is the unique minimizer of the equal weights energy. If $N=1$, this reduces to the isoperimetric problem. For any pair of volumes, the double bubble conjecture was fully settled first in the plane by Foisy-Alfaro-Brock-Hodges-Zimba in \cite{FoiAlfBroHodZim93}, next in $\mathbb{R}^3$ by Hutchings-Morgan-Ritor\'{e}-Ros \cite{HutMorRitRos02}, then in $\mathbb{R}^4$ by Reichardt-Heilmann-Lai-Spielman \cite{ReiHeiLaiSpi03}, and finally in $\rn$ for arbitrary $n$ by Reichardt \cite{Rei08}. We refer the reader to the references therein for additional results that also contributed to the complete resolution of the double bubble conjecture. The triple bubble conjecture was verified in $\mathbb{R}^2$ by Wichiramala \cite{Wic04}. Recently, Milman and Neeman have shown in \cite{MilNee22} that $N$-bubbles are the unique minimizers when $N\leq \min \{4, n\}$ and in \cite{MilNee23} that quintuple bubbles are minimizers when $n\geq 5$. Also, minimizers for the equal volumes quadruple bubble problem in $\mathbb{R}^2$ were characterized by Paolini and Tortorelli \cite{PaoTor20}. The characterization of minimizers in the immiscible fluids problem with non-equal weights has received less attention, having only been resolved in the case of two chambers by Lawlor \cite{Law14}; see also \cite[Theorem 7.2]{HutMorRitRos02}. 

\medskip

In classical clusters, there is a single chamber with infinite volume. In \cite{AlaBroVri23}, Alama-Bronsard-Vriend have generalized this concept through the introduction of $(N,M)$ clusters, which partition $\rn$ into $N$ chambers of finite prescribed volume and $M$ chambers of infinite volume, with $M$ allowed to be strictly greater than one \cite{AlaBroVri23}. This requires changing the definition of minimality, since any $(N,M)$-cluster when $M\geq 2$ automatically has infinite energy on all of space. One possible notion is thus {\it local minimality} subject to compactly supported perturbations which preserve the volume of the non-infinite chambers. The generalization of the standard weighted double bubble is the {\it standard weighted lens cluster}, which the same authors showed is the unique local minimizer for the equal weights energy in the plane. The equal weights problem was further studied by Novaga-Paolini-Tortorelli \cite{NovPaoTor23}, who obtained a general closure theorem for local minimizers when the limiting $(N,M)$ cluster has flat interfaces outside some compact set. The closure theorem allows for the construction of several locally minimizing $(N,M)$-clusters in any dimension. They also proved that any local minimizer in the plane necessarily has at most $3$ chambers with infinite volume and completely characterized planar local minimizers when $N+M \leq 4$.

\medskip

In this paper we study $(1,2)$-clusters $\{\X(j)\}_{j=1}^3$, where $|\X(1)|<\infty$, and the weighted energy $\sum_{j<k}c_{jk}\hn(\pa^* \X(j) \cap \pa^* \X(k))$, where the family of weights satisfies standard positivity and triangularity conditions, and first show that the standard weighted lens cluster is locally minimizing (Theorem \ref{thm:local minimality of lens}). Our main result (Theorem \ref{thm:double bubble}) can be summarized as follows:

\begin{center}
    If $c_{12}=c_{13}$, then, up to rigid motions of $\rn$, the standard weighed lens cluster is the unique locally minimizing $(1,2)$-cluster in $\rn$ for $n\leq 7$. The same uniqueness holds in $\rn$ for $n\geq 8$ among locally minimizing $(1,2)$-clusters with planar growth at infinity.
\end{center}

\noindent The planar growth assumption means that asymptotically, the two infinite volume chambers have the same energy density as a pair of complementary halfspaces. This assumption is natural in light of the existence of singular perimeter minimizers in higher dimensions; see Remark \ref{remark:n>7}. The proof of Theorem \ref{thm:double bubble} is based on an energy comparison argument with the standard weighted lens cluster and the geometric rigidity entailed by the resulting energy identity. The comparison is made possible by decay properties of exterior minimal surfaces with planar growth; see Section \ref{subsec:discussion} for a more detailed discussion of the proof. We also include 
a proof of the equivalence between the above notion of local minimality and another natural one (Lemma \ref{lemma:equiv min notions}) and use it to
strengthen the closure theorem from \cite{NovPaoTor23} when there are two chambers with infinite volume (Corollary \ref{corollary:closure theorem}).

\begin{figure}
\begin{overpic}[scale=0.65,unit=1mm]{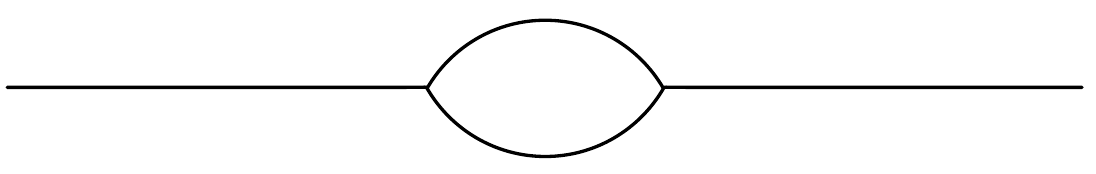}
\put(75,12.5){\small{$\xl(2)$}}
\put(45,7){\small{$\xl(1)$}}
\put(75,1.5){\small{$\xl(3)$}}
\end{overpic}
\caption{The standard weighted lens cluster $\xl$.}\label{fig:lens}
\end{figure}

\medskip
The partitioning problem for $(N,M)$ clusters introduced in \cite{AlaBroVri23} comes from the study of triblock copolymer models in the 2D torus by Alama-Bronsard-Lu-Wang in \cite{AlaBroLuWanprep}. In \cite{AlaBroLuWan22}, these authors study a partitioning problem in the torus with three phases, one of which occupies nearly all of the total area with the other two accounting for only a tiny fraction. In that case, the global  minimizers form weighted double-bubble or core-shell patterns depending on the parameters. In \cite{AlaBroLuWanprep}, they
consider this partitioning problem but with two of the phases occupying nearly all of the total area and the third accounting for only a tiny fraction. In that case, we expect minimizers of the nonlocal triblock copolymer energy to form a lamellar pattern with the two majority phases, with tiny droplets of the third phase aligned on each lamellar stripe.  By blowing up the droplets in the limit of vanishing area of the third phase we expect to recover the lens shape in the plane, and the characterization of the  \textit{vesica piscis} as the unique minimizer (up to symmetries) of this problem is critical to the analysis of the small-area limit of the triblock problem.

\medskip

The rest of the paper is organized as follows. In Section \ref{sec:results}, we give precise definitions and statements of our results, and Section \ref{sec:notat and prelim} contains some background results. Section \ref{subsec:prelim} establishes various useful properties of locally minimizing $(N,M)$-clusters. Some of these may be applicable in future investigations of $(N,M)$-clusters, and so we have opted for generality regarding weights/number of chambers/etc.~ when possible in this part. Finally, we prove Theorem \ref{thm:local minimality of lens} and Theorem \ref{thm:double bubble} in Section \ref{sec:main proof}.

\section{Results}\label{sec:results}

\subsection{General {\bf (\textit{N},\textit{M})}-clusters}

We begin by recalling the notion of an $(N,M)$-cluster due to Alama-Bronsard-Vriend \cite{AlaBroVri23} and some basic terminology for these objects. Definitions \ref{def:nm cluster}-\ref{def:interfaces} can be found in \cite[Section 1]{AlaBroVri23}, and Definitions \ref{def:weighted perimeter}-\ref{def:loc mins} are the weighted analogues of the energy and local minimizers in \cite{AlaBroVri23}.

\begin{definition}[$(N,M)$-clusters]\label{def:nm cluster}
    An $(N,M)${\it-cluster} $(\E,\F)$ is a pair of finite families of sets of locally finite perimeter
\begin{align}\notag
\E = \{ \E(h) \}_{h=1}^N\,,\quad \F = \{ \F(i)\}_{i=1}^M    
\end{align}
such that, denoting by $|\cdot|$ the Lebesgue measure,
\begin{enumerate}[label=(\roman*)]
    \item $0<|\E(h)|<\infty$ for $1\leq h \leq N$,
    \item $|\F(i)|=\infty$ for $1\leq i \leq M$,
    \item any two distinct sets from either family are Lebesgue disjoint, that is
    \begin{align*}
        &|\E(h) \cap \E(h')|=0\quad \textup{for all $1\leq h < h' \leq N$}\,, \\
        &|\F(i) \cap \F(i')|=0\quad \textup{for all $1\leq i < i' \leq M$}\,,\quad\textup{and}\\
        &|\E(h) \cap \F(i)| = 0 \quad \textup{for all $1\leq h \leq N$ and $1\leq i \leq M$}\,,\quad\textup{and}
    \end{align*} 
    \item $\big| \mathbb{R}^n \setminus (\cup_{h=1}^N \E(h) \cup \cup_{i=1}^M \F(i))| = 0$.
\end{enumerate}
The sets with finite Lebesgue measure are called {\it proper chambers} and the sets with infinite Lebesgue measure are called {\it improper chambers}.
\end{definition}

\noindent We will streamline the notation by referring to an $(N,M)$-cluster $(\E,\F)$ by the single family $\X=\{\X(j) \}_{j=1}^{N+M}$, where $\X(j)=\E(j)$ if $1\leq j \leq N$ and $\X(j+N) = \F(j)$ if $1 \leq j \leq M$.

\begin{definition}[Volume vector]
    For an $(N,M)$-cluster $\X$, the {\it volume vector} is the $(N+M)$-tuple
\begin{align}\notag
    \m(\X) = (|\X(1)|,\dots,|\X(N+M)|)
\end{align}
    with entries in the extended positive real numbers $(0,\infty]$.
\end{definition}

\begin{definition}[Interfaces]\label{def:interfaces}
    For an $(N,M)$-cluster $\X$, the {\it interfaces} are the locally $\hn$-rectifiable sets formed by the intersection of the reduced boundaries of two distinct chambers, so
\begin{align}\notag
    \X(j,k) = \pa^* \X(j) \cap \pa^* \X(k)\qquad \forall  1\leq j < k \leq N+M\,.
\end{align}
\end{definition}

\noindent Since an $(N,M)$-cluster $\X$ corresponds to a Caccioppoli partition of $\rn$, the Lebesgue points of $\X(j)$ and the interfaces partition $\rn$ up to a $\hn$-null set (see e.g. \cite[Theorem 4.17]{AmbFusPal00}), that is

\begin{align}\label{eq:hn partition}
   \hn \Bigg( \rn \setminus \Bigg[\bigcup_{1\leq j\leq N+M} \X(j)^\one \cup \bigcup_{1\leq j<k\leq N+M} \X(j,k)\Bigg]\Bigg)=0\,,
\end{align}
and any pair of sets from either union are disjoint.

\begin{definition}[Relative (weighted) perimeter]\label{def:weighted perimeter}
    For an $(N,M)$-cluster $\X$ and family of weights ${\bf c}:=\{c_{jk}\}_{1\leq j<k\leq N+M}$, the {\it $\bf c$-perimeter of $\X$ relative to a Borel set $B\subset \rn$} is
\begin{align}\label{eq:weighted cluster energy}
    \pc(\X;B) = \sum_{1\leq j < k \leq N+M} c_{jk}\hn( \X(j,k)\cap B)\,.
\end{align}
\end{definition}

\begin{definition}[$\bf c$-locally minimizing clusters]\label{def:loc mins}
    A {\it $\bf c$-locally minimizing $(N,M)$-cluster} $\X$ associated to a family $\bf c$ of weights satisfies $\cl \pa^* \X(j) = \pa \X(j)$ for all $1\leq j \leq N+M$ (following the convention for minimizing clusters in \cite[Part IV]{Mag12}) and, for every $r>0$, the local minimality condition
\begin{align}\label{eq:min condition}
    \pc(\X;B_r(0)) \leq \pc(\X';B_r(0))
\end{align}
for every $(N,M)$-cluster $\X'$ such that $\m(\X) = \m(\X')$ and $\X(j) \Delta \X'(j) \cc B_r(0)$ for each $1\leq j \leq N+M$.
\end{definition}



\subsection{Statements}
To state the main theorem, we must first discuss the behavior of $\bf c$-locally minimizing $(1,2)$-clusters at infinity; the proofs of the following statements are included in Section \ref{subsec:prelim}. Let us assume that the family of weights ${\bf c}=\{c_{12},c_{13},c_{23}\}$ satisfies
\begin{align}\tag{pos.}\label{eq:pos}
    & \min\{c_{12},c_{13},c_{23}\}>0,\\ \tag{$\triangle$-ineq.} \label{eq:triangle}
    &c_{12}< c_{13}+c_{23},\quad c_{13}<c_{12}+c_{23},\quad \textup{and}\quad c_{23}<c_{13}+c_{23}\,.
\end{align}
Comparison arguments based on \eqref{eq:pos}-\eqref{eq:triangle} show that if $\X$ is a $\bf c$-locally minimizing $(1,2)$-cluster, then $\X(1)\cc B_R(0)$ for some $R<\infty$. In particular, the volume constraint is trivially satisfied when testing the minimality \eqref{eq:min condition} with $\X'$ such that $\X(1)=\X'(1)$, which is the case for any $\X'$ which only differs from $\X$ on $\rn \setminus \cl B_R(0)$. As a consequence, $\X(2)$ and $\X(3)$ are perimeter minimizers in $\rn \setminus \cl B_R(0)$, with common boundary satisfying the minimal surface equation distributionally, so the monotonicity formula allows us to compute the energy density of $\X$ at infinity.

\begin{definition}[Density at infinity]\label{def:growth def}
    If the family $\bf c$ satisfies \eqref{eq:pos}-\eqref{eq:triangle} and $\X$ is a $\bf c$-locally minimizing $(1,2)$-cluster, then the {\it density of $\X$ at infinity} is
\begin{align}\label{eq:density at infinity}
\Theta_\infty(\X) = \lim_{r\to \infty}\frac{\pc(\X;B_r(0))}{r^{n-1}}\,.
\end{align} 
We say $\X$ has {\it planar growth at infinity} if $\X$ is a $\bf c$-locally minimizing $(1,2)$-cluster and $\Theta_\infty(\X) = c_{23}\,\omega_{n-1}$, where $\omega_{n-1}$ is the $(n-1)$-dimensional measure of the unit ball in $\mathbb{R}^{n-1}$.
\end{definition}




\medskip

Next we define the standard weighted lens cluster; see Figure \ref{fig:lens}.

\begin{definition}[Standard weighted lens cluster]
    The {\it standard weighted lens cluster} $\xl$ in $\rn$ associated to weights $\{c_{12},c_{13},c_{23}\}$ satisfying \eqref{eq:pos}-\eqref{eq:triangle} is the $(1,2)$-cluster such that
\begin{enumerate}[label=(\roman*)]
    \item $|\xl(1)|=1$,
    \item $\pa \xl(1)=\cl \xl(1,2) \cup \cl \xl(1,3)$, where $\xl(1,2)\subset \{x_n>0\}$ and $\xl(1,3)\subset \{x_n<0\}$ are spherical caps with boundaries contained in the plane $\{x_n=0\}$,
    \item $\xl(2,3)=\{x_n=0\}\setminus D$, where $D\subset \{x_n=0\}$ is the closed $(n-1)$-dimensional disk with boundary (relative to $\{x_n=0\}$) given by $\pa D = \{x_n=0\} \cap \pa \xl(1)$, and
    \item for each $j$, the two smooth surfaces forming $\partial \xl(j)$ and meeting along $\pa D$ form an angle $\theta_j\in (0,\pi)$, and 
\begin{align}\notag
\frac{\sin \theta_1}{c_{23}}=\frac{\sin \theta_2}{c_{13}}=\frac{\sin\theta_3}{c_{12}}\,,    
\end{align}
where $\theta_1+\theta_2+\theta_3=2\pi$. \end{enumerate}
\end{definition}

\noindent The following theorems establish the $\bf c$-local minimality of the standard weighted lens cluster as well as its uniqueness under the additional symmetry assumption $c_{12}=c_{13}$ and a natural growth assumption in higher dimensions. By scaling, they may also be applied to $(1,2)$ clusters with volume vector $(m,\infty,\infty)$ for any $m\in (0,\infty)$.

\begin{theorem}[Local minimality of the standard weighted lens cluster]\label{thm:local minimality of lens}
    If the family $\bf c$ of weights satisfies \eqref{eq:pos}-\eqref{eq:triangle}, then the standard weighted lens cluster $\xl$ is a $\bf c$-locally minimizing $(1,2)$ cluster in $\rn$.
\end{theorem}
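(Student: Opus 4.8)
\textbf{Proof strategy for Theorem \ref{thm:local minimality of lens}.}

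The plan is to prove local minimality by a calibration-type argument, exhibiting a vector field (or rather a triple of vector fields, one per chamber, or equivalently a single "paired calibration" in the sense of Lawlor--Morgan) that certifies $\xl$ as a minimizer among all competitors with the same volume of the first chamber and compactly supported symmetric difference. First I would reduce to the localized problem: since $\xl(1) \cc B_{\rl}(0)$ for the lens radius $\rl$, it suffices to work in a fixed large ball $B_r(0)$, and on such a ball the volume constraint on $\X(1)$ must be handled, so the calibration will carry a Lagrange-multiplier term. Concretely, following the paired-calibration framework for immiscible fluids (as in Lawlor \cite{Law14} and the treatment in \cite[Theorem 7.2]{HutMorRitRos02}), I would seek vector fields $\xi_1,\xi_2,\xi_3$ on $\rn$ with $|\xi_j - \xi_k| \le c_{jk}$ pointwise, $\Div(\xi_j - \xi_k) = 0$ away from $\xl(1)$ and equal to a fixed constant (the multiplier) on $\xl(1)$, such that on the interface $\xl(j,k)$ one has $(\xi_j - \xi_k)\cdot \nu_{jk} = c_{jk}$ with $\nu_{jk}$ the unit normal. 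The divergence theorem then gives $\pc(\xl;B_r) - \pc(\X';B_r) \le 0$ for every admissible competitor $\X'$, because the boundary terms on $\pa B_r$ cancel (competitors agree with $\xl$ near $\pa B_r$) and the volume term vanishes by the constraint $|\X'(1)| = |\xl(1)|$.

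The key steps, in order, would be: (1) record the first-variation/stationarity of $\xl$ — the two spherical caps $\xl(1,2),\xl(1,3)$ have constant mean curvature $H_1$ determined by the multiplier, the flat piece $\xl(2,3)$ is a hyperplane hence minimal, and along the singular circle $\pa D$ the three sheets meet at angles $\theta_1,\theta_2,\theta_3$ obeying the balance condition with the stated sine law, which is exactly the force-balance $c_{23}\tau_1 + c_{13}\tau_2 + c_{12}\tau_3 = 0$ for the appropriate conormals; (2) construct $\xi_2$ and $\xi_3$ first on the flat side — take them to be constant vertical fields $\pm \tfrac{c_{23}}{2} e_n$-type in the region far from $\xl(1)$, interpolating near the lens so that the constraint $|\xi_2-\xi_3|\le c_{23}$ is saturated exactly on $\{x_n=0\}\setminus D$; (3) extend all three fields through the region around $\xl(1)$, using the CMC structure of the caps — a natural choice is $\xi_1$ built from the position field scaled by $H_1$ inside $\xl(1)$ so that $\Div \xi_1$ is the constant multiplier, matched to the $\xi_2,\xi_3$ from step (2) on the caps where $(\xi_1 - \xi_j)\cdot\nu = c_{1j}$; (4) verify the pointwise inequalities $|\xi_j - \xi_k| \le c_{jk}$ globally, which is where the triangle inequality \eqref{eq:triangle} on the weights and the sine-law angle relations enter decisively; (5) apply the divergence theorem on $B_r(0)$ to each chamber of $\xl$ and of $\X'$, sum with signs, and conclude. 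Alternatively, if building a globally smooth calibration proves awkward near $\pa D$ and near $\pa \xl(1)$, one can instead run a direct slicing/comparison argument: foliate space by the level sets of a distance-like function adapted to the lens and use the known minimality of each cap (isoperimetric/CMC) together with the minimality of hyperplanes, but the calibration route is cleaner and gives the equality cases for free.

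The main obstacle I anticipate is constructing the calibration cleanly across the singular set $\pa D$ where all three interfaces meet, and simultaneously across $\pa\xl(1) \cap \{x_n = 0\} = \pa D$ where the geometry transitions from the spherical caps to the flat plane: one must ensure the vector fields $\xi_j$ remain Lipschitz (or at least of bounded variation with the right jump conditions so the divergence theorem applies to sets of finite perimeter), that the constraint $|\xi_j - \xi_k|\le c_{jk}$ is not violated in the transition annulus, and that the divergence condition is maintained. This is exactly the point where the precise angle condition $\theta_1 + \theta_2 + \theta_3 = 2\pi$ with the sine law is needed — it is the compatibility condition that makes the three saturated-constraint conditions on the three interfaces simultaneously realizable by a single continuous triple. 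A secondary technical point is justifying the divergence theorem for the competitor $\X'$, whose chambers are merely sets of locally finite perimeter; this is standard (Gauss--Green for sets of finite perimeter against Lipschitz fields) but must be stated carefully, and one should also confirm that the class of competitors in \eqref{eq:min condition} can be assumed to have $\xl$-like behavior outside $B_r$ without loss of generality, so that all boundary integrals on $\pa B_r$ indeed cancel.
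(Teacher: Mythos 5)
Your calibration strategy is a genuinely different route from the paper's. The paper instead approximates $\xl$ by standard weighted double bubbles $\xbub$ with volume vector $(1,m,\infty)$: given a competitor $\X$ for $\xl$ on $B_r(0)$, it builds a $(2,1)$-cluster $\X_m$ that agrees with $\X$ inside $B_R(0)$ and with a volume-fixed perturbation of $B_{r_m}(y_m)$ (the ball bounding $\partial\xbub(2)$) outside, applies Lawlor's global minimality $\pc(\xbub) \le \pc(\X_m)$ as a black box, and passes to the limit $m\to\infty$ using Lemma \ref{lemma:convergence of weighted double bubbles}. This sidesteps the calibration construction entirely: the main difficulties you flag — building the triple $\xi_1,\xi_2,\xi_3$ so that the saturation and pointwise constraints are compatible across the singular circle $\partial D$ and across the transition from the CMC caps to the unbounded flat interface $\xl(2,3)$ — are exactly what Lawlor's theorem packages away, and the paper's limiting argument inherits this for free. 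The trade-off is that the paper's route requires some care with volume-fixing variations on the annular region near $\partial B_{r_m}(y_m)$ and with the ``gluing'' error along $\partial B_R(0)$, but these are standard cut-and-paste estimates. If executed, your direct calibration would be self-contained and would likely deliver the rigidity/equality statement at no extra cost, but as written the hard step — actually constructing the calibrating fields near $\partial D$ and matching them to constant fields near infinity — is identified but not carried out, and this is precisely where the paper's approximation argument gains the advantage. (Note also that when $c_{12}=c_{13}$ the paper's Remark \ref{remark:alternate proof of minimality} offers yet another proof via Steiner symmetrization and Theorem \ref{thm:liquid drops}, but only in the symmetric case; your proposal and the paper's main argument both cover general weights satisfying \eqref{eq:pos}--\eqref{eq:triangle}.)
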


\begin{theorem}[Weighted double bubble theorem for $(1,2)$-clusters]\label{thm:double bubble}
    If $1\leq n \leq 7$ and the family $\bf c$ of weights satisfies \eqref{eq:pos}-\eqref{eq:triangle} and $c_{12}=c_{13}$, then, up to rigid motions of $\rn$, the standard weighted lens cluster $\xl$ is the unique $\bf c$-locally minimizing $(1,2)$-cluster with volume vector $(1,\infty,\infty)$. If $n\geq 8$, the same uniqueness holds among $\bf c$-locally minimizing $(1,2)$-clusters that also have planar growth at infinity.
\end{theorem}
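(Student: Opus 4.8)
The plan is to establish Theorem~\ref{thm:double bubble} via an energy comparison with $\xl$ together with a rigidity analysis of the equality case. Let $\X$ be a $\bf c$-locally minimizing $(1,2)$-cluster with $\m(\X)=(1,\infty,\infty)$ (and, if $n\geq 8$, planar growth at infinity). By the remarks preceding Definition~\ref{def:growth def}, $\X(1)\cc B_R(0)$ for some $R<\infty$, and $\X(2,3)$ is, outside $\cl B_R(0)$, a two-sided perimeter minimizer whose reduced boundary solves the minimal surface equation. The first step is to control the behavior of $\X(2,3)$ at infinity: when $1\leq n\leq 7$ there are no nonflat entire minimal graphs or singular minimizing cones, and when $n\geq 8$ the planar growth hypothesis $\Theta_\infty(\X)=c_{23}\,\om_{n-1}$ forces $\X(2,3)$ to be flat at infinity; in either case, a blow-down argument shows $\X(2,3)$ is asymptotic to a hyperplane $\Pi$, and decay estimates for exterior minimal surfaces (of the type used in exterior Bernstein problems) give that $\X(2,3)\setminus B_r(0)$ converges to $\Pi\setminus B_r(0)$ in $C^1$ with a quantitative rate as $r\to\infty$. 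After a rigid motion we may take $\Pi=\{x_n=0\}$.

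The second step is the comparison argument. Fix the lens cluster $\xl$ so that its flat interface $\xl(2,3)$ also lies in $\{x_n=0\}$ and its proper chamber $\xl(1)$ sits in a bounded region; choose $r$ large so that both $\X(1)$ and $\xl(1)$ are contained in $B_{r/2}(0)$. The idea is to build a competitor for $\X$ in $B_r(0)$ by excising $\X$ inside $B_{r/2}(0)$, inserting a rescaled/translated copy of the core of $\xl$, and interpolating between the (nearly flat) interface $\X(2,3)$ and the flat plane $\{x_n=0\}$ in the annulus $B_r\setminus B_{r/2}$; symmetrically, build a competitor for $\xl$ using the core of $\X$. Because $\X(2,3)$ is $C^1$-close to $\{x_n=0\}$ on the annulus with a rate that beats the $r^{n-1}$ volume factor, the interpolation costs only $o(1)$ energy as $r\to\infty$. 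Applying \eqref{eq:min condition} to $\X$ and, via Theorem~\ref{thm:local minimality of lens}, to $\xl$, and letting $r\to\infty$, one obtains the matching inequalities $\pc(\X;B_\infty)\le \pc(\xl;B_\infty)$ (suitably interpreted, subtracting the common halfspace energy $c_{23}\om_{n-1}r^{n-1}$) and its reverse, hence an energy identity: $\X$ has exactly the $\bf c$-perimeter of $\xl$ in its core. Here the symmetry assumption $c_{12}=c_{13}$ is used to guarantee the competitor constructed by gluing the $\xl$-core into $\X$ (and vice versa) respects the weights correctly and to make the rigidity in the next step available.

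The third step extracts rigidity from this energy identity. Since $\X$ achieves the minimal energy, $\X$ is itself a $\bf c$-locally minimizing $(1,2)$-cluster that is \emph{globally} energy-matched to $\xl$ outside a compact set and attains equality; I would run a calibration/first-variation argument. The interfaces $\X(1,2)$, $\X(1,3)$ are constant mean curvature hypersurfaces meeting $\X(2,3)$ along a common $(n-2)$-dimensional set with the Young angles $\theta_1,\theta_2,\theta_3$ prescribed by the weight relation; with $c_{12}=c_{13}$ one gets $\theta_2=\theta_3$ and the reflection symmetry across $\{x_n=0\}$. Using that $\X(2,3)$ is a \emph{minimal} graph that is flat at infinity and bounds a region containing the CMC caps, a standard moving-planes / Alexandrov-type argument — or, more in the spirit of the paper, the equality case of the isoperimetric-type inequality underlying the comparison — forces $\X(2,3)$ to be exactly the plane minus a disk $D$, and $\X(1,2),\X(1,3)$ to be spherical caps over $\pa D$; the volume normalization $|\X(1)|=1$ then pins down the configuration, so $\X=\xl$ up to a rigid motion.

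The main obstacle is the second step: making the gluing in the annulus $B_r\setminus B_{r/2}$ precise while (a) exactly preserving the volume constraint $|\X(1)|=1$ on the competitor, (b) keeping all the pairwise interfaces admissible as a Caccioppoli partition, and (c) showing the interpolation error is genuinely $o(1)$ and not merely $O(1)$ — this is where the $C^1$ decay rate for exterior minimal surfaces with planar growth is essential, and where the dimensional restriction $n\le 7$ (or the growth hypothesis for $n\ge 8$) really enters. A secondary difficulty is ensuring the equality case of the comparison is rigid enough to conclude $\X=\xl$ rather than merely some cluster with the same energy; this is handled by combining the constancy of mean curvature from the first variation with the flatness-at-infinity of $\X(2,3)$.
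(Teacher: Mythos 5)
Your high-level outline---decay of $\X(2,3)$ to a plane at infinity, an energy comparison with a competitor, and a rigidity step---matches the paper's strategy, but the two inner steps deviate in ways that leave genuine gaps.

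First, the comparison: you propose gluing the core of $\xl$ into $\X$ and vice versa, extracting an energy identity $\pc(\X)\approx\pc(\xl)$ after renormalizing by the plane. The paper instead compares $\X$ to its own \emph{Steiner symmetrization} $\X^S$ over $\{x_n=0\}$, not to $\xl$ directly. It introduces an excess function
$\ee(r)=c_{13}[P(\X(1))-P(\X^S(1))]+c_{23}[\hn(\X(2,3)\cap C_r)-\hn(E(\X)\cap B_r^{n-1})]$
which is non-negative by a Steiner-type inequality for $(1,2)$-clusters (Lemma~\ref{lemma:proj and symm}), monotone in $r$, and bounded above by $\mathrm{O}(r^{-1})$ using the asymptotic expansion and gluing onto tilted planes $\Pi_r$ over cylinders $C_r$ (not balls). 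Hence $\ee\equiv 0$. This is stronger and more useful than the scalar identity $\pc(\X)=\pc(\xl)$: vanishing of the excess \emph{term by term} triggers the equality cases of two inequalities, which is exactly what powers the rigidity. Your version of the comparison would only give one scalar equality, which by itself does not force $\X$ to be anything in particular. Also note your reverse gluing (inserting the core of $\X$ into $\xl$) is technically awkward: $\xl(2,3)$ is \emph{exactly} flat outside a compact set, so gluing a non-flat core of $\X$ into it creates a mismatch on $\pa C_{r/2}$ that is not obviously $o(1)$; the paper avoids this asymmetry entirely by only ever modifying $\X$.

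Second, and this is the more serious gap, your rigidity step (``calibration/first-variation argument,'' ``moving-planes / Alexandrov-type argument'') is not a workable path here. Alexandrov and moving planes apply to closed embedded CMC hypersurfaces or to graphs over fixed domains with controlled boundary, not to pieces of CMC caps meeting a free boundary $\X(2,3)$ along a singular $(n-2)$-dimensional set whose structure is part of what is being proved; and no calibration for the weighted three-chamber energy with a volume constraint is available. The paper's rigidity is of an entirely different, measure-theoretic nature: equality in the projection inequality from Lemma~\ref{lemma:proj and symm}.$(ii)$ forces $\nu_{\X(2)}=-e_n$ $\hn$-a.e.\ on $\X(2,3)$ and hence (via a divergence-theorem argument) $\X(2,3)=\{x_n=0\}\setminus \cl B_{\rl}^{n-1}(0)\times\{0\}$; then the identity $\pc(\X;C_r)=\pc(\X^S;C_r)$, together with Lemma~\ref{lemma:rewriting cluster energy} and the uniqueness of half-space liquid-drop minimizers (Theorem~\ref{thm:liquid drops} $=$ \cite[Theorem 19.21]{Mag12}), pins down $\X^S=\xl$; finally, equality in the classical Steiner inequality plus the Barchiesi--Cagnetti--Fusco rigidity theorem (Theorem~\ref{thm:symm}) gives $\X(1)=\X^S(1)$ up to a vertical translation, which is then excluded by the flatness of $\X(2,3)$. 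None of these ingredients appear in your sketch; without them, you cannot pass from your energy identity to $\X=\xl$. You also need a preliminary step you omit: the convex-hull property to confine $\cl\X(1)\cup\X(2,3)$ to a horizontal slab $\mathbb{R}^{n-1}\times[-a,a]$, which is what makes the slicing and symmetrization over $\{x_n=0\}$ applicable at all. In short, the right tools are Steiner symmetrization, slicing rigidity, and the liquid-drop classification---not calibration or moving planes.
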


\noindent Theorem \ref{thm:double bubble} characterizes all $\bf c$-locally minimizing $(1,2)$-clusters in $\rn$ when $c_{12}=c_{13}$ and $1\leq n \leq 7$. In $\rn$ for $n\geq 8$, there may be other $\bf c$-locally minimizing $(1,2)$-clusters.

\begin{remark}[Other $\bf c$-locally minimizing clusters in $\R^n$ for $n\geq 8$]\label{remark:n>7}
   The blow-down of any $\bf c$-locally minimizing $(1,2)$-cluster $\X$ in $\rn$ corresponds to a perimeter minimizer in all of space. When $n\leq 7$, this blow-down necessarily corresponds to a halfspace, as there are no other entire perimeter minimizers. However when $n\geq 8$, the blow-down might be a singular minimizing cone. By Allard's theorem, the blow-down cone is singular and non-planar if and only if $\Theta_\infty(\X) >c_{23} \omega_{n-1}$. It seems possible that locally minimizing $(1,2)$-clusters can be constructed using a cone such as the Simons cone in $\R^8$ to define $\X(2)$ and $\X(3)$ outside $\X(1)$, in which case the planar growth restriction is optimal. 
\end{remark}

\begin{remark}[Uniqueness of the lens under a different notion of minimality]
    An alternative notion of local minimality for a $(1,2)$-cluster $\X$ to Definition \ref{def:loc mins} would be for \eqref{eq:min condition} to hold on every $B_r(0)$ among those $\X'$ such that $\X(j) \Delta \X'(j) \cc B_r(0)$ and $|\X(j) \cap B_r(0)|=|\X'(j) \cap B_r(0)|$ for each $1\leq j \leq 3$. The lens cluster $\xl$ is certainly minimal under this definition since it is less restrictive than Definition \ref{def:loc mins}, in that it requires minimality against fewer competitors. However, these two notions are actually equivalent; see Lemma \ref{lemma:equiv min notions}. Thus the symmetric lens cluster is the unique local minimizer under this definition as well by Theorem \ref{thm:double bubble}. As a corollary to the equivalency of these minimality notions, the closure theorem from \cite{NovPaoTor23} can be strengthened in the case of two improper chambers by removing the asymptotic flatness assumption; see Corollary \ref{corollary:closure theorem}.
\end{remark}

\begin{remark}[Connection with large-volume exterior isoperimetry]\label{remark:residue connection}
 In \cite{MagNov22}, for a compact set $W$, the second author and F. Maggi studied the limit as $v\to \infty$ of minimizers for the exterior isoperimetric problem
\begin{align}\notag
  \min \{P(E;\rn \setminus W): |E|=v,\,\, E \subset \rn \setminus W \}\,.  
\end{align}
The limiting object as $v\to \infty$ of the boundaries $\pa E_v$ of a sequence $\{E_v\}_{v>0}$ of minimizers is an exterior minimal surface in $\rn \setminus W$. A key tool in obtaining a sharp geometric description of $\pa E_v$ for large $v$ is the analysis of fine properties of this minimal surface, in particular uniqueness of blow-down cones and asymptotic decay. The properties of the interface $\pa^* \X(2) \cap \pa^* \X(3)$, which is an exterior minimal surface, play a similar role in our analysis. This connection is visible in for example Corollary \ref{corollary:asymptotic expansion} and the beginning of the proof of Theorem \ref{thm:double bubble}, in which asymptotic properties of $\pa^* \X(2) \cap \pa^* \X(3)$ are important. 
\end{remark}

\subsection{Discussion}\label{subsec:discussion}
The proof of Theorem \ref{thm:local minimality of lens} utilizes the minimality of the weighted double bubbles as shown by Lawlor \cite{Law14}. When $c_{12}=c_{13}$, an alternate proof using the symmetry is also available; see Remark \ref{remark:alternate proof of minimality}. The proof of Theorem \ref{thm:double bubble} requires showing that given $\bf c$ with $c_{12}=c_{13}$, any $\bf c$-locally minimizing $(1,2)$-cluster $\X$ must in fact be equivalent via null sets and rigid motions to $\xl$. The starting point is using the planar growth and the uniqueness of blow-downs/asymptotic expansion from Maggi-N.~ \cite{MagNov22} to show that outside some compact set, $\pa^* \X(2) \cap \pa^* \X(3)$ is the graph, say over $H:=\{x\in \rn:x_n=0\}$ of an exterior solution $u$ to the minimal surface equation decaying very fast to $H$. The hyperplane $H$ is thus our candidate for the plane over which we expect $\X$ to be symmetric, in the sense that $\X(1)$ is symmetric over $H$ and $\pa^* \X(2) \cap \pa^* \X(3) = H \setminus \cl \X(1)$. In fact, if we knew that $\X$ possessed this symmetry, then we could use the uniqueness of minimizers for the classical liquid drop problem in a halfspace and the relationship between $\pc$ and the liquid drop energy (Lemma \ref{lemma:rewriting cluster energy}) to conclude that, up to translations along $H$, $\X = \xl$. So to prove the uniqueness of the lens, it remains to show that $\X$ is symmetric over $H$. For the proper chamber $\X(1)$, one possible way of obtaining such symmetry is via a rigidity result for the Steiner inequality due to Barchiesi-Cagnetti-Fusco \cite{BarCagFus13}; for the improper chambers, symmetry, or flatness, should come from the fact that the projection of $\pa^* \X(2) \cap \pa^* \X(3)$ onto $H$ strictly decreases $\hn$ measure if $\nu_{\X(2)}$ deviates from $- e_n$; see Figure \ref{fig:symmetrization}. We therefore prove a local Steiner-type inequality for $(1,2)$ clusters in Lemma \ref{lemma:proj and symm} -- it is here we use the symmetry assumption $c_{12}=c_{13}$ -- and would like to compare the energy of the ``symmetrized" $\X$ to $\X$ itself.
\begin{figure}
\begin{overpic}[scale=0.7,unit=1mm]{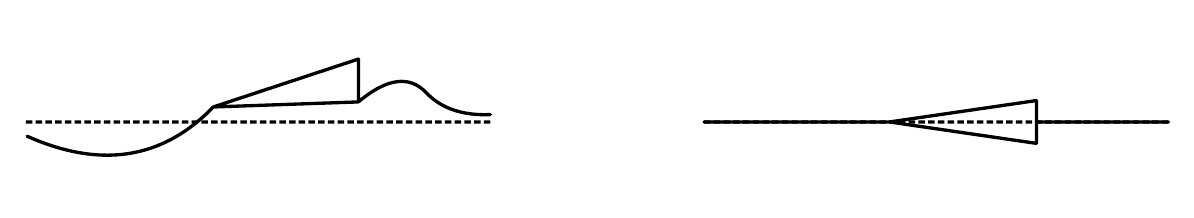}
\put(10,14){\small{$\X(2)$}}
\put(25,5){\small{$\X(3)$}}
\put(26,14){\small{$\X(1)$}}
\put(63,5){\small{$\X^S(3)$}}
\put(63,9.5){\small{$\X^S(2)$}}
\put(85,10.5){\small{$\X^S(1)$}}
\put(3,8.5){\small{$H$}}
\end{overpic}
\caption{On the left is the original cluster $\X$, and on the right is the ``symmetrized" cluster $\X^S$ over the dotted line $H$. Unless $\X$ is planar outside some compact set, $\X^S$ will not be a compactly supported variation of $\X$. If $\pa^* \X(2) \cap \pa^* \X(3)$ decays fast enough to $H$, then $\pc(\X;B_r)$ and $\pc(\X^S;B_r)$ can be compared up to small error.}\label{fig:symmetrization}
\end{figure}
This is in general not allowable, since the symmetrization will not be a compactly supported variation of $\X$. However, the decay of $\pa^*\X(2) \cap \pa^* \X(3)$ to $H$ makes such a comparison possible on $B_r(0)$ up to a small error of order $r^{-1}$. By sending $r\to \infty$ and making use of a monotonicity property of the energy gap between $\X$ and its symmetrization, we are able to conclude that, up to translating $\X$, $\pc (\X;B_r(0)) = \pc(\xl;B_r(0))$ for all $r>0$. At this point we can finally show that $\X$ must be symmetric over $H$ due to the aforementioned rigidity considerations.

\section{Notation and Preliminaries}\label{sec:notat and prelim}
\subsection{Notation}
Let $C_r$ be the infinite cylinder
\begin{align}\notag
C_r = \{x\in \rn : x_1^2 + \cdots + x_{n-1}^2 < r^2 \}\,.
\end{align}
We will write $\overline{x}$ to distinguish points in $\mathbb{R}^{n-1}$ from points in $\rn$, and set $B_r^{n-1}(\overline{x})$ to be the $(n-1)$-dimensional ball of radius $r$ centered at $\overline{x}\in \mathbb{R}^{n-1}$. 

\medskip

For Borel sets $A,B\subset \mathbb{R}^n$ and $1\leq k \leq n$, we write $A\overset{\mathcal{H}^k}{=} B$ when $\mathcal{H}^k(A \Delta B)=0$
and $A\shk B$ when $\mathcal{H}^k(A \setminus B)=0$.

\medskip

We will adhere to common notation regarding sets of finite perimeter; see for example the book \cite{Mag12}. For a set $E\subset \rn$ of locally finite perimeter $P(E;B)$ is the perimeter of $E$ inside $B$ if $B\subset \rn$ is Borel, $E^{\scriptsize{(t)}}$ is the set of points of Lebesgue density $t\in [0,1]$, $\pa^e E = \rn \setminus (E^\one \cup E^\zero)$ is the essential boundary of $E$, and $\pa^* E$ is the reduced boundary with outer (measure-theoretic) normal $\nu_E:\pa^* E \to \mathbb{S}^{n-1}$. When we are using these concepts on some $k$-dimensional set of locally finite perimeter $F\subset \mathbb{R}^k$ where $k\neq n$, we will often write $F^\onerk$, $F^\zerork$, and $\pastrk F$ to emphasize that these operations are taken with respect to $\mathbb{R}^k$. The $L^1_\loc$-convergence of the characteristic functions $\mathbf{1}_{E_m}$ of sets of finite perimeter to some $\mathbf{1}_E$ will be denoted by $E_m \overset{\loc}{\to}E$.

\medskip

To describe the convergence of clusters, we say that {\it $\X_m$ locally converges to $\X$}, or
\begin{align}\notag
    \X_m \overset{\loc}{\to} \X\,,
\end{align}
if, for each $1\leq j \leq N+M$,
\begin{align}\notag
   \big|(\X_m(j) \Delta  \X(j))\cap K\big|\to 0\qquad \forall K \cc \rn\,.
\end{align}

\medskip

Lastly, we remark that given weights $\{ c_{12},c_{13},c_{23}\}$ and corresponding $c_1$, $c_2$, $c_3$ defined via the linear system $c_{jk}=c_j + c_k$, \eqref{eq:triangle} is equivalent to 
\begin{equation}\label{eq:pos for alt form}
    \min\{c_1,c_2,c_3\}>0\,,
\end{equation} 
and, for any Borel $B\subset \rn$,
\begin{align}\label{eq:pc rewrite}
    \pc(\X;B) &= c_1P(\X(1);B) + c_2P(\X(2);B) + c_3P(\X(3);B)\,.
\end{align}

\subsection{Preliminaries}\label{subsec:sfop prelim} Here we collect some facts and theorems from the literature that will be used in the proofs.

\medskip

First is Federer's theorem \cite[Theorem 3.61]{AmbFusPal00}, which says that if $E\subset \rn$ is a set of locally finite perimeter, then
\begin{align}\label{eq:fed thm}
    \pa^* E \subset E^\half \subset \pa^e E\,,\quad \pa^e E \ehn \pa^* E\,,\quad \mbox{and}\quad \rn \ehn E^\one \cup E^\zero \cup E^\half\,.
\end{align}
Second are the formulas for perimeters of unions and intersections \cite[Theorem 16.3]{Mag12}. For any Borel $G\subset \rn$ and sets of locally finite perimeter $E$ and $F$, we have
\begin{align}\label{eq:cut and paste union}
    P(E\cup F;G) &= P(E;F^\zero \cap G) + P(F;E^\zero \cap G) + \hn(\{\nu_E = \nu_F\}\cap G)\quad\textup{and} \\ \label{eq:cut and past intersection}
    P(E\cap F;G) &= P(E;F^\one \cap G) + P(F;E^\one \cap G) + \hn(\{ \nu_E=\nu_F\}\cap G)\,.
\end{align}
As a consequence of \eqref{eq:cut and paste union}-\eqref{eq:cut and past intersection} and \eqref{eq:fed thm}, if $E$ and $F$ are sets of finite perimeter and $H$ is a set of locally finite perimeter such that $\hn(\partial^* H \cap (\partial^* E \cup \partial^* F))=0$, then for every Borel $G\subset \rn$,
\begin{align}\notag
    P((E \cap H) \cup (F \setminus H);G) &= \hn(\pa^* E \cap H^\one \cap G) + \hn(\partial^* F \cap H^\zero \cap G) \\ \label{eq:cut and paste}
    &\qquad + \hn((E^\one \Delta F^\one)\cap \partial^* H \cap G)\,.
\end{align}

\medskip

Next, following the presentation in \cite[Section 1]{BarCagFus13}, we state a rigidity result from symmetrization. Decomposing $\rn=\{(\overline{x},y):\overline{x} \in \mathbb{R}^{n-1},\,y\in  \mathbb{R}\}$, the {\it one-dimensional slices} of a Borel measurable $E\subset \rn$ (with respect to the subspace $\{(\overline{x},y)\in \rn:y=0 \}$) and their accompanying measures are defined for each $\ovx\in \mathbb{R}^{n-1}$ as
\begin{align}\notag
   E_\ovx:= \{y\in \mathbb{R} : (\ovx,y)\in E \} \quad\textup{and}\quad L_E(\ovx) = \mathcal{H}^1(E_\ovx)\,. 
\end{align}
We also set 
\begin{align}\notag
 \pi(E)^+ := \{\ovx \in \mathbb{R}^{n-1}: L_E(\ovx)>0 \}\,.   
\end{align}
The {\it Steiner symmetral} of $E$ is
\begin{align}\notag
  E^S := \{(\ovx,y) \in \mathbb{R}^n: \ovx\in \pi(E)^+,\,\, |y| < \mathcal{H}^1(E_\ovx)/2 \}\,.  
\end{align}
The classical Steiner inequality says that if $E$ is a set of finite perimeter, then
\begin{align}\label{eq:steiner inequality}
P(E^S ; B \times \mathbb{R}) \leq P(E; B \times \mathbb{R}) \quad \forall \mbox{ Borel $B\subset \mathbb{R}^{n-1}$}\,,
\end{align}
with equality when $B=\mathbb{R}^{n-1}$ implying that $E_{\ovx}$ is an interval for $\mathcal{H}^{n-1}$-a.e.~ $\ovx\in \pi(E)^+$ \cite[Theorem 1.1.(a)]{BarCagFus13}. 

\medskip

To state the theorem on equality cases in \eqref{eq:steiner inequality}, we first introduce the condition
\begin{align}\label{eq:no vertical parts}
    \hn\big(\{x\in \pa^* (E^S) : \nu_{E^S}(x) \cdot e_n = 0 \}\cap (\Omega\times \mathbb{R})\big)=0\,,
\end{align}
for any open $\Omega \subset \mathbb{R}^{n-1}$. Geometrically speaking, \eqref{eq:no vertical parts} says that $\partial^* (E^S)$ has no non-negligible ``vertical parts" over $\Omega$. Under the assumption \eqref{eq:no vertical parts}, the function $L_E$ belongs to $W^{1,1}(\mathbb{R}^{n-1})$ \cite[Proposition 3.5]{BarCagFus13} (as opposed to the weaker $BV(\mathbb{R}^{n-1})$ when \eqref{eq:no vertical parts} fails). Therefore, the Lebesgue average of $L_E$ exists for $\mathcal{H}^{n-2}$-a.e.~ $\ovx\in \mathbb{R}^{n-1}$ \cite[pages 160 and 156]{EvaGar92}. We define the {\it precise representative}
\begin{align}\notag
  L_E^*(\ovx) =  \begin{cases}
  \displaystyle\lim_{r\to 0}\displaystyle\frac{1}{\omega_{n-1}r^{n-1}}\int_{\{|\overline{z}-\ovx|<r\}} L_E(\overline{z})\,d\mathcal{L}^{n-1}(\overline{z}) & \mbox{if the limit exists} \\ 
    0  & \mbox{otherwise} \,,
\end{cases}
\end{align}
so that $L_E^*(\ovx)$ is equal to its Lebesgue average for $\mathcal{H}^{n-2}$-a.e.~ $\ovx\in \mathbb{R}^{n-1}$. The following rigidity result, which we state in co-dimension 1, was proved by Barchiesi-Cagnetti-Fusco in \cite{BarCagFus13} for Steiner symmetrization of arbitrary co-dimension; see also \cite{CagColDePMag14} for additional results on rigidity in \eqref{eq:steiner inequality}.

\begin{theorem}\cite[Theorem 1.2]{BarCagFus13}\label{thm:symm}
   If $\Omega \subset \mathbb{R}^{n-1}$ is a connected open set, $E\subset \mathbb{R}^n$ is a set of finite perimeter such that $P(E^S;\Omega\times \mathbb{R})=P(E;\Omega\times \mathbb{R})$, \eqref{eq:no vertical parts} holds, and $L_E^*(\ovx)>0$ for $\mathcal{H}^{n-2}$-a.e.~ $\ovx\in \Omega$, then $E \cap (\Omega \times \mathbb{R})$ is Lebesgue equivalent to a translation along $\{\overline{0}\}\times \mathbb{R} $ of $E^S \cap (\Omega \times \mathbb{R})$.
\end{theorem}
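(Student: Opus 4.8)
The plan is to first promote the single equality $P(E^S;\Omega\times\mathbb{R})=P(E;\Omega\times\mathbb{R})$ to the same equality on every vertical slab $B\times\mathbb{R}$ with $B\subset\Omega$ Borel: by the localized Steiner inequality \eqref{eq:steiner inequality}, the set function $B\mapsto P(E;B\times\mathbb{R})-P(E^S;B\times\mathbb{R})$ is a nonnegative Borel measure on $\Omega$, and since it vanishes on all of $\Omega$ it vanishes identically. In particular, by the equality statement recalled after \eqref{eq:steiner inequality} (namely \cite[Theorem~1.1(a)]{BarCagFus13}), for $\mathcal{H}^{n-1}$-a.e.\ $\ovx\in\pi(E)^+$ the slice $E_{\ovx}$ is an interval, so up to a Lebesgue-null set
\[
E\cap(\Omega\times\mathbb{R})=\{(\ovx,y):\ \ovx\in\pi(E)^+\cap\Omega,\ f(\ovx)<y<g(\ovx)\}
\]
for measurable $f\le g$ with $g-f=L_E$. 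It then suffices to show that the barycenter $b:=\tfrac12(f+g)$ is Lebesgue-a.e.\ equal to a constant $c$ on $\Omega$, since then $E\cap(\Omega\times\mathbb{R})$ agrees up to null sets with the translate of $E^S\cap(\Omega\times\mathbb{R})$ along $\{\overline{0}\}\times\mathbb{R}$ by $c$.

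Next I would write out the standard decomposition of the perimeter of a set with segment-shaped slices over a slab (cf.\ \cite[Section~3]{BarCagFus13}) for both $E$ and $E^S$: $P(\,\cdot\,;B\times\mathbb{R})$ splits into a ``graph'' term, a ``vertical'' term supported on the jump sets of the upper and lower profiles, and a ``lateral'' term over the boundary of $\pi(E)^+$ in $\mathbb{R}^{n-1}$ given by the integral of the trace of $L_E$. Since $\pi(E^S)^+=\pi(E)^+$ and $L_{E^S}=L_E$, the lateral terms of $E$ and $E^S$ coincide; since \eqref{eq:no vertical parts} holds for $E^S$, \cite[Proposition~3.5]{BarCagFus13} gives $L_E\in W^{1,1}(\mathbb{R}^{n-1})$, so the vertical term of $E^S$ is absent and its graph term is $\int_{\pi(E)^+\cap B}2\sqrt{1+|\nabla(L_E/2)|^2}$. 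For $E$ the graph term is $\int_{\pi(E)^+\cap B}\big(\sqrt{1+|\nabla g|^2}+\sqrt{1+|\nabla f|^2}\big)$, and the elementary strict-convexity inequality
\[
\sqrt{1+|p|^2}+\sqrt{1+|q|^2}\ \ge\ 2\sqrt{1+\big|\tfrac{p-q}{2}\big|^2}\,,\qquad\text{with equality iff } p=-q\,,
\]
applied with $p=\nabla g(\ovx)$ and $q=\nabla f(\ovx)$ (so $\tfrac{p-q}{2}=\nabla(L_E/2)$), shows $P(E;B\times\mathbb{R})\ge P(E^S;B\times\mathbb{R})$ with the defect controlled from below by these terms. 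Imposing equality for every $B$ then forces the vertical term of $E$ to vanish over $\Omega$ — whence $f,g\in W^{1,1}_{\loc}$ on $\pi(E)^+\cap\Omega$ — and $\nabla g=-\nabla f$, i.e.\ $\nabla b=0$, a.e.\ on $\pi(E)^+\cap\Omega$.

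Finally I would upgrade ``$\nabla b=0$'' to ``$b\equiv c$''. Here the hypothesis $L_E^*(\ovx)>0$ for $\mathcal{H}^{n-2}$-a.e.\ $\ovx\in\Omega$, together with $L_E\in W^{1,1}$ and the fine (quasicontinuity) properties of Sobolev functions, ensures that $\pi(E)^+$ agrees with $\Omega$ outside an $\mathcal{H}^{n-2}$-negligible set and, in combination with the vanishing of the vertical term of $E$, that $\pi(E)^+\cap\Omega$ is indecomposable (measure-theoretically connected); a $W^{1,1}_{\loc}$ function with zero gradient on an indecomposable set is constant, so $b\equiv c$ and the conclusion follows. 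I expect the main obstacle to be the bookkeeping behind the comparison in the second paragraph — in particular establishing that $E$ has no vertical parts once equality holds, which is delicate because $f$ and $g$ are linked only through the single Sobolev function $L_E=g-f$ rather than being individually regular a priori — and making the indecomposability argument rigorous through the precise representative $L_E^*$; these two points are the genuine content of the Barchiesi-Cagnetti-Fusco proof, while the rest reduces to the localization of the Steiner defect and the convexity inequality above.
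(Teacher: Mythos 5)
The paper does not prove this theorem; it is cited verbatim from \cite[Theorem 1.2]{BarCagFus13}, so there is no in-paper argument to compare against and the question is whether your sketch correctly reconstructs the BCF proof. Your overall shape is right and matches theirs: localize the Steiner defect to all Borel subslabs, use the equality case to reduce to slices being intervals so that $E=\{f<y<g\}$, use strict convexity of $t\mapsto\sqrt{1+t^2}$ to force $\nabla g = -\nabla f$ (i.e.\ $\nabla b=0$ for the barycenter $b$), and then exploit $L_E^*>0$ and connectedness of $\Omega$ to get $b$ constant. The convexity inequality and the equality case $p=-q$ are stated correctly.

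The genuine gap is in the second paragraph. Writing $P(E;B\times\mathbb{R})$ as a ``graph $+$ vertical $+$ lateral'' sum involving $\nabla f$ and $\nabla g$ presupposes that $f,g$ -- equivalently $b$ -- are at least in $BV_\loc$ on $\pi(E)^+\cap\Omega$, with distributional gradients whose singular parts can be cleanly separated out. That is not a hypothesis: condition \eqref{eq:no vertical parts} plus \cite[Prop.~3.5]{BarCagFus13} gives $L_E=g-f\in W^{1,1}(\mathbb{R}^{n-1})$, but it gives \emph{no} a priori regularity for $b=(f+g)/2$, and $f,g$ individually need not even be locally integrable. Establishing that $b$ has a distributional derivative at all, and then that equality kills not just the absolutely continuous ``barycentric tilt'' but also the jump \emph{and} Cantor parts of $Db$, is precisely the technical core of the BCF argument; you cannot reach it by first writing the pointwise formula that assumes it. You do flag this as ``the genuine content,'' which is honest, but it means the sketch as written presupposes its main step rather than proves it. A rigorous version must instead argue at the level of Vol'pert-type slicing and the structure of $\partial^* E$ (as in the paper's Lemma~\ref{lemma:slice}), which is how BCF proceed. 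The last step (``zero gradient plus indecomposable implies constant'') also needs the capacity-type fact that removing the $\mathcal{H}^{n-2}$-null set where $L_E^*$ might vanish does not disconnect $\Omega$ for the purpose of propagating the constant value of the quasicontinuous representative of $b$; this is in the spirit of what you wrote but is not automatic.
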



We will also use a slicing result for clusters adapted from the corresponding result for slices of sets of finite perimeter by lines from \cite[Theorem 2.4]{BarCagFus13}, which in turn is based on Vol'pert \cite{Vol67}. 

\begin{lemma}[Slicing by lines]\label{lemma:slice}
    If $\X$ is an $(N,M)$-cluster in $\rn$, then there exists Borel measurable $F\subset \mathbb{R}^{n-1}$ such that $\hn(\mathbb{R}^{n-1}\setminus F)=0$, and, if $\overline{x}\in F$, then 
\begin{enumerate}[label=(\roman*)]
\item $\X(j)_\ovx\subset \mathbb{R}$ is a set of locally finite perimeter for all $1\leq j \leq N+M$,
\item $(\X(j)^\one)_\ovx \ehone (\X(j)_\ovx)^\oner$ and $(\X(j)^\zero)_\ovx \ehone (\X(j)_\ovx)^\zeror$ for $1\leq j \leq N+M$, 
\item $[\pa^* \X(j) \cap \pa^*\X(k)]_\ovx = \pastr (\X(j)_\ovx) \cap \pastr(\X(k)_\ovx)$ for all $1\leq j < k \leq N+M$, and 
\item $\nu_{\X(j)}(\overline{x},t) \cdot e_n\neq 0$ and $\nu_{\X(j)}(\overline{x},t) \cdot e_n / |\nu_{\X(j)}(\overline{x},t) \cdot e_n| = \nu_{\X(j)_\ovx}(t)$ for every $t\in [\pa^* \X(j)]_\ovx$ and each $1\leq j \leq N+M$.
\end{enumerate}
\end{lemma}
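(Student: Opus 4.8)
\textbf{Proof plan for Lemma \ref{lemma:slice}.}

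The plan is to reduce the statement to the known slicing theorem for a single set of finite perimeter, namely \cite[Theorem 2.4]{BarCagFus13}, applied successively to each chamber $\X(j)$, $1\leq j \leq N+M$, and then intersect the resulting full-measure sets of good slices. More precisely, for each fixed $j$, localize by writing $\rn$ as an increasing union of slabs $B_R^{n-1}(\overline 0)\times (-R,R)$ and apply the cited result on each such bounded cylinder to the set $\X(j)$ (which has finite perimeter there, since $\X$ is an $(N,M)$-cluster and hence each chamber has locally finite perimeter). This produces, for each $j$, a Borel set $F_j\subset \mathbb{R}^{n-1}$ with $\hn(\mathbb{R}^{n-1}\setminus F_j)=0$ such that for $\overline x\in F_j$: the slice $\X(j)_\ovx$ is a set of locally finite perimeter in $\mathbb R$; the density relations $(\X(j)^\one)_\ovx \ehone (\X(j)_\ovx)^\oner$ and $(\X(j)^\zero)_\ovx \ehone (\X(j)_\ovx)^\zeror$ hold; the slice of the reduced boundary satisfies $[\pa^*\X(j)]_\ovx \ehzero \pastr(\X(j)_\ovx)$; and $\nu_{\X(j)}(\ovx,t)\cdot e_n\neq 0$ with $\nu_{\X(j)}(\ovx,t)\cdot e_n/|\nu_{\X(j)}(\ovx,t)\cdot e_n| = \nu_{\X(j)_\ovx}(t)$ at every $t$ in that slice. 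Then I would set $F:=\bigcap_{j=1}^{N+M} F_j$, which is Borel and has full measure since it is a finite intersection; items (i), (ii), and (iv) are then immediate for $\overline x\in F$.

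The one item that is not literally a coordinate-by-coordinate consequence of the single-set theorem is (iii), the statement about slices of \emph{interfaces} $\X(j,k)=\pa^*\X(j)\cap\pa^*\X(k)$. Here I would argue as follows. For $\overline x\in F$ we have from the single-set slicing that $[\pa^*\X(j)]_\ovx$ and $\pastr(\X(j)_\ovx)$ agree up to $\mathcal H^0$-null sets, i.e.\ they are equal as sets of slice points after discarding finitely many exceptional $t$'s; the same for $k$. Thus
\begin{align}\notag
[\pa^*\X(j)\cap\pa^*\X(k)]_\ovx = [\pa^*\X(j)]_\ovx \cap [\pa^*\X(k)]_\ovx \ehzero \pastr(\X(j)_\ovx)\cap\pastr(\X(k)_\ovx)\,,
\end{align}
which already gives (iii) as an $\mathcal H^0$-identity; promoting the ``$\ehzero$'' to genuine set equality for a.e.\ slice requires knowing that the exceptional $\mathcal H^0$-null sets are in fact empty for a.e.\ $\overline x$. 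For a single set of finite perimeter this is part of Vol'pert's theorem (for a.e.\ line the slice of $\pa^*E$ equals the reduced boundary of the slice exactly, not just up to null sets), and the cited \cite[Theorem 2.4]{BarCagFus13} is stated in this exact form; so intersecting the good-slice sets $F_j$ preserves the exact equalities $[\pa^*\X(j)]_\ovx=\pastr(\X(j)_\ovx)$, and hence the intersection identity (iii) holds exactly. I would also invoke the Caccioppoli-partition structure \eqref{eq:hn partition}: up to an $\hn$-null set, $\rn$ is the disjoint union of the chambers' density-one points and the interfaces, and slicing commutes with this decomposition for a.e.\ $\overline x$ by Fubini, which is what guarantees that the slice of the interface between $j$ and $k$ is accounted for entirely by $\pastr(\X(j)_\ovx)\cap\pastr(\X(k)_\ovx)$ and contains no spurious extra points.

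The main obstacle, and the only place any care is needed, is precisely this bookkeeping in (iii): ensuring that the finitely-many exceptional points allowed in the single-set slicing theorem can be discarded simultaneously for all chambers on a set of $\overline x$ of full measure, and that after doing so the slice of an intersection of reduced boundaries is exactly the intersection of the sliced reduced boundaries. Everything else — (i), (ii), (iv) — transfers directly from \cite[Theorem 2.4]{BarCagFus13} chamber by chamber, with the localization-to-bounded-cylinders step being routine since all the relevant properties are local in $\overline x$ and the countable union of slabs exhausts $\rn$. I would present the proof by first quoting the single-set statement, then doing the finite intersection, then spelling out the interface computation above with a one-line appeal to Vol'pert/\cite[Theorem 2.4]{BarCagFus13} for the exactness of the reduced-boundary slice identity.
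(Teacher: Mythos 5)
Your plan is essentially the paper's proof: apply the single-set slicing theorem of Barchiesi--Cagnetti--Fusco (Theorem 2.4, which is Vol'pert's theorem) to each chamber, intersect the finitely many full-measure good-slice sets, and observe that item (iii) then follows because slicing commutes with intersection and BCF's statement gives the \emph{exact} set equality $[\pa^*\X(j)]_\ovx = \pastr(\X(j)_\ovx)$ for good $\ovx$, so the interface slice is literally $\pastr(\X(j)_\ovx)\cap\pastr(\X(k)_\ovx)$. This matches the paper's one-line proof for items (i), (iii), (iv).

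The one mis-attribution is item (ii): you list the density-point relations $(\X(j)^\one)_\ovx \ehone (\X(j)_\ovx)^\oner$ and $(\X(j)^\zero)_\ovx \ehone (\X(j)_\ovx)^\zeror$ among the conclusions of BCF Theorem 2.4 and claim they ``transfer directly'' from that theorem chamber by chamber. They are not part of that statement, which concerns the slices themselves, their reduced boundaries, and normals. The paper explicitly flags (ii) as requiring a separate (elementary) argument via Fubini's theorem and the one-dimensional Lebesgue points theorem: since $|\X(j)\,\Delta\,\X(j)^\one|=0$, Fubini gives $\mathcal{H}^1\big(\X(j)_\ovx\,\Delta\,(\X(j)^\one)_\ovx\big)=0$ for a.e.\ $\ovx$, and then the Lebesgue density theorem in $\mathbb{R}$ identifies $(\X(j)_\ovx)^\oner$ with $\X(j)_\ovx$ up to $\mathcal{H}^1$-null sets, and similarly for the density-zero sets. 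This is a genuinely separate step from what BCF Theorem 2.4 provides and should not be folded into that citation.
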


\begin{proof}
Items $(i)$, $(iii)$, and $(iv)$ follow from \cite[Theorem 2.4.$(i)$-$(iii)$]{BarCagFus13} and \eqref{eq:hn partition}, while $(ii)$ is a consequence of Fubini's theorem and the Lebesgue points theorem.
\end{proof}

Finally, we will need a classical result drawn from \cite[Chapter 19]{Mag12} on the liquid drop problem in a halfspace. For a halfspace $H\subset \rn$, set of finite perimeter $E\subset H$, and real number $\beta\in \mathbb{R}$, let
\begin{align}\label{eq:gauss free energy}
    \mathcal{F}_{\beta}(E;H)= P(E;H) - \beta P(E;\partial H)\,.
\end{align}
Before stating the relevant theorem, we motivate it through a lemma connecting \eqref{eq:gauss free energy} to $\pc$ under some symmetry assumptions. 

\begin{lemma}\label{lemma:rewriting cluster energy}
If $\bf c$ is a family of weights satisfying $c_{12}=c_{13}$, $H$ is an open halfspace, $R$ is the reflection map over $\pa H$, and $\X$ is a $(1,2)$-cluster such that $\X(1)\cc B_r(0)$,
    \begin{align}\label{eq:symmmetry assumptions in lemma}
        \X(1) \overset{\mathcal{L}^n}{=} R(\X(1))\,, \quad \X(2) \overset{\mathcal{L}^n}{=} H \setminus \X(1)\,,\quad \mbox{and}\quad\X(3) \overset{\mathcal{L}^n}{=} \rn \setminus (\X(1) \cup \X(2))\,,
    \end{align}
then 
\begin{eqnarray}\label{eq:reduced boundary comps 1}
    &\pa H \shn \X(1)^\one \cup \X(1)^\zero\,,\quad \pa H \cap  \pa^*(\X(1) \cap H) \ehn \X(1)^\one \cap \pa H\,,& \\ \label{eq:reduced boundary comps 2}
    & \pa H \cap (\X(1) \cap H)^\zero= \pa H \cap \X(1)^\zero \ehn \X(2,3)\,,&
\end{eqnarray}
and
\begin{align}\label{eq:energy of symmetric cluster}
    \pc(\X;C_r) = 2c_{13}\mathcal{F}_{c_{23}/(2c_{13})}(\X(1) \cap H;H) +  c_{23}\omega_{n-1}r^{n-1} \,.
\end{align}
\end{lemma}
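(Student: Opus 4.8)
\textbf{Proof plan for Lemma \ref{lemma:rewriting cluster energy}.}

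The plan is to treat the three claims in turn, deriving the boundary identities \eqref{eq:reduced boundary comps 1}--\eqref{eq:reduced boundary comps 2} first, since they feed directly into the energy computation \eqref{eq:energy of symmetric cluster}. Write $A = \X(1)$ and note that by the symmetry hypothesis $A = R(A)$ up to null sets, so (after adjusting $A$ on a null set) $A$ is genuinely $R$-invariant; in particular the reflection symmetry forces $\pa H$ to contain no part of $\pa^* A$ up to $\hn$-measure. The cleanest way to see this is via the slicing in Lemma \ref{lemma:slice} applied with the decomposition $\rn = \pa H \times \mathbb{R}$: for $\hn$-a.e.~$\ovx\in \pa H$, the slice $A_\ovx$ is a symmetric (about $0$) set of finite perimeter in $\mathbb{R}$, hence either a symmetric interval containing $0$ in its interior, or a symmetric set with $0$ in its measure-theoretic exterior, or $0$ on its essential boundary --- but the last possibility occurs for only $\hn$-many $\ovx$. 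Combined with Lemma \ref{lemma:slice}(ii), this gives $\pa H \shn A^\one \cup A^\zero$ (the first part of \eqref{eq:reduced boundary comps 1}), and moreover $\pa H \cap A^\one \ehn \{\ovx : L_A^*(\ovx) > 0\}$ while $\pa H \cap A^\zero \ehn \{\ovx : L_A^*(\ovx) = 0\}$, again up to the choice of precise representatives.

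Next I would identify the reduced boundary of $A\cap H$ relative to $\pa H$. Since $H$ is a halfspace and $A$ is $R$-symmetric, a point $\ovx \in \pa H$ lies in $\pa^*(A\cap H)$ with normal $-\nu_H$ (i.e.~pointing out of $H$) precisely when the slice $A_\ovx$ has positive length, i.e.~when $\ovx \in A^\one \cap \pa H$; this is exactly the second assertion of \eqref{eq:reduced boundary comps 1}. For \eqref{eq:reduced boundary comps 2}: a point of $\pa H$ where the slice $A_\ovx$ is null has density $0$ for $A$ and also density $0$ for $A\cap H$ (it has density $1/2$ for $H$ itself), and by the partition identity \eqref{eq:hn partition} together with $\X(2) = H\setminus A$, $\X(3) = \rn\setminus(A\cup \X(2))$, such a point lies on the interface $\X(2,3) = \pa^*\X(2)\cap \pa^*\X(3)$; conversely $\hn$-a.e.~point of $\X(2,3)$ lies in $\pa H$ (since away from $\pa H$, $\X(2)$ and $\X(3)$ are locally on opposite sides of $\pa H$ except through $A$) and has null $A$-slice. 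This yields $\pa H \cap A^\zero \ehn \X(2,3)$, completing \eqref{eq:reduced boundary comps 2}.

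For the energy identity I would decompose $\pc(\X;C_r)$ using \eqref{eq:pc rewrite} or, more directly, the interface formulation \eqref{eq:weighted cluster energy}: the three interfaces inside the cylinder $C_r$ are $\X(1,2) = \pa^* A \cap H$, $\X(1,3) = \pa^* A \cap (\rn\setminus \cl H)$, and $\X(2,3) \cap C_r = \pa H \cap A^\zero \cap C_r$. By the $R$-symmetry of $A$, $\hn(\X(1,2)\cap C_r) = \hn(\X(1,3)\cap C_r) = \hn(\pa^*(A\cap H); H)$ --- here I use that $\pa^* A$ does not charge $\pa H$, so $\pa^*(A\cap H)\cap H = \pa^* A \cap H$. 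Since $c_{12} = c_{13}$, the first two terms contribute $2c_{13}\,P(A\cap H; H)$. For the $\X(2,3)$ term, $\hn(\pa H \cap A^\zero \cap C_r) = \hn((\pa H \cap C_r)\setminus A^\one) = \omega_{n-1}r^{n-1} - \hn(\pa H \cap A^\one)$, and by \eqref{eq:reduced boundary comps 1} this last quantity equals $\hn(\pa H \cap \pa^*(A\cap H)) = P(A\cap H;\pa H)$. Multiplying by $c_{23}$ and summing gives
\begin{align}\notag
\pc(\X;C_r) = 2c_{13}\,P(A\cap H;H) - c_{23}\,P(A\cap H;\pa H) + c_{23}\,\omega_{n-1}r^{n-1},
\end{align}
which is exactly \eqref{eq:energy of symmetric cluster} after recognizing $2c_{13}\,P(A\cap H;H) - c_{23}\,P(A\cap H;\pa H) = 2c_{13}\big(P(A\cap H;H) - \tfrac{c_{23}}{2c_{13}}P(A\cap H;\pa H)\big) = 2c_{13}\,\mathcal{F}_{c_{23}/(2c_{13})}(A\cap H;H)$.

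The main obstacle is the rigorous handling of the boundary traces on $\pa H$: one must be careful that $\pa^* A$ genuinely has $\hn$-null intersection with $\pa H$ (so that $A\cap H$ "inherits" its reduced boundary in $H$ from $A$ and picks up exactly the trace set on $\pa H$), and that the competitor set identities in \eqref{eq:symmmetry assumptions in lemma}, which hold only up to $\mathcal{L}^n$-null sets, translate into the stated $\hn$-a.e.~identities between essential boundaries and density sets. Both points are resolved by the slicing Lemma \ref{lemma:slice} and the symmetry of the one-dimensional slices, but the bookkeeping with precise representatives $L_A^*$ versus $A^\one \cap \pa H$ requires some care; this is where \eqref{eq:hn partition} and Federer's theorem \eqref{eq:fed thm} do the heavy lifting.
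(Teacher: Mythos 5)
Your proposal is correct and arrives at \eqref{eq:energy of symmetric cluster} by essentially the same computation as the paper (the paper groups $\X(1,2)\cup\X(1,3)$ via \eqref{eq:hn partition} into $P(\X(1))$ and then splits by symmetry; you keep the two interfaces separate, but it is the same accounting). The difference lies in how you establish \eqref{eq:reduced boundary comps 1}--\eqref{eq:reduced boundary comps 2}: you slice by lines orthogonal to $\pa H$ and exploit symmetry of the one-dimensional sections $A_\ovx$, whereas the paper notes that the Lebesgue density of $A\cap H$ along $\pa H$ is $\leq 1/2$, so Federer's theorem alone yields \eqref{eq:hyperplane breakdown}, and the symmetry of $A$ (density of $A$ is twice that of $A\cap H$ on $\pa H$) then gives the rest without slicing. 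The paper's route is shorter; yours is also valid and more explicit where the paper says ``routine manipulations.''

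One point of imprecision worth flagging: you invoke Lemma \ref{lemma:slice}(ii) to pass from ``$0\notin\pastr A_\ovx$ for a.e.~$\ovx$'' to ``$\pa H\shn A^\one\cup A^\zero$.'' But item (ii) only gives $(A^\one)_\ovx\ehone(A_\ovx)^\oner$, an equality of slices \emph{up to $\mathcal{H}^1$-null sets}, which cannot pin down the behavior at the single value $t=0$. What you actually need is the exact equality of slices of the reduced boundary, $[\pa^* A]_\ovx=\pastr A_\ovx$ for a.e.~$\ovx$ (Vol'pert's theorem, or \cite[Theorem 2.4]{BarCagFus13}; the cluster version in Lemma \ref{lemma:slice}(iii) is the analogous statement for interfaces). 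With that substitution the slicing argument goes through: for a.e.~$\ovx$ the slice $A_\ovx$ is a genuinely symmetric one-dimensional set of finite perimeter, so $0\notin\pastr A_\ovx$, hence $(\ovx,0)\notin\pa^*A$, hence $\hn(\pa^*A\cap\pa H)=0$, and Federer then gives $\pa H\shn A^\one\cup A^\zero$.
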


\begin{proof}
First, by Federer's theorem \eqref{eq:fed thm} and the fact that the Lebesgue density of $\X(1) \cap H$ along $\pa H$ is at most $1/2$,
\begin{align}\label{eq:hyperplane breakdown}
    \pa H \ehn \pa H \cap \big[\pa^*(\X(1) \cap H) \cup (\X(1) \cap H)^\zero \big]\,.
\end{align}
Through routine manipulations which we omit based on \eqref{eq:hyperplane breakdown}, \eqref{eq:fed thm}, and \eqref{eq:cut and past intersection}, one concludes \eqref{eq:reduced boundary comps 1}-\eqref{eq:reduced boundary comps 2}. For \eqref{eq:energy of symmetric cluster}, we start by using the equality $c_{12}=c_{13}$, \eqref{eq:hn partition}, and $\X(1) \cc B_r(0) \subset C_r$ to rewrite
\begin{align}\label{eq:0th term}
   \pc(\X;C_r) &= c_{13}P(\X(1)) + c_{23}\hn(\X(2,3) \cap C_r)\,.
\end{align}
By the first equivalence in \eqref{eq:reduced boundary comps 1} and the fact that $\X(1)$ is symmetric over $\pa H$,
\begin{align}\label{eq:1st term}
    c_{13}P(\X(1)) = c_{13}P(\X(1);\rn  \setminus \pa H) =  2c_{13}P(\X(1); H) = 2c_{13}P(\X(1) \cap H ; H)\,.
\end{align}
Also, from \eqref{eq:reduced boundary comps 1}-\eqref{eq:reduced boundary comps 2},
\begin{align}\notag
   c_{23}\hn(\X(2,3) \cap C_r) &= c_{23}\hn( \pa H \cap (\X(1) \cap H)^\zero \cap C_r) = c_{23}\hn(\pa H \cap \X(1)^\zero \cap C_r) \\ \label{eq:2nd term}
   &= c_{23}\big[\hn(\pa H \cap C_r) - \hn(\pa H \cap \pa^*(\X(1) \cap H) \cap C_r) \big]\,.
\end{align}
The equality \eqref{eq:energy of symmetric cluster} follows by inserting the sum of \eqref{eq:1st term} and \eqref{eq:2nd term} into \eqref{eq:0th term}.
\end{proof}

\noindent The next theorem is a rephrasing of \cite[Theorem 19.21]{Mag12} and characterizes minimizers for the liquid drop problem without gravity in a halfspace in terms of standard weighted lens clusters.

\begin{theorem}\label{thm:liquid drops}
    If $\bf c$ is a family of weights satisfying \eqref{eq:pos}-\eqref{eq:triangle} and $c_{12}=c_{13}$, $\xl$ is the standard weighted lens cluster corresponding to the weights $\bf c$, and $H=\{x\in \rn:x_n>0\}$, then $\xl(1)\cap H$ is the unique minimizer up to horizontal translations and $\mathcal{L}^n$-null sets for the variational problem
\begin{align}\label{eq:liquid drop problem anchoring}
   &\inf \big\{\mathcal{F}_{c_{23}/(2c_{13})}(E;H): |E|=1/2,\, E \subset H,\, P(E)<\infty \big\}\,. 
\end{align}
\end{theorem}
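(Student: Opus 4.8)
The plan is to recognize \eqref{eq:liquid drop problem anchoring} as an instance of the sessile-drop (capillarity) problem in a half-space and to invoke the characterization of its minimizers from \cite[Chapter 19]{Mag12}; the only genuinely new content is the identification of the resulting truncated ball with $\xl(1)\cap H$. First I would record that $\beta:=c_{23}/(2c_{13})$ lies in $(0,1)$: positivity of $\beta$ is immediate from \eqref{eq:pos}, while $c_{12}=c_{13}$ together with \eqref{eq:triangle} gives $c_{23}<c_{12}+c_{13}=2c_{13}$, hence $\beta<1$. In particular $\mathcal{F}_\beta(\cdot;H)$ is bounded below on $\{E\subset H:|E|=1/2\}$ and \cite[Theorem 19.21]{Mag12} applies: minimizers of \eqref{eq:liquid drop problem anchoring} exist, and up to $\mathcal{L}^n$-null sets and translations along $\pa H$ the unique minimizer is a truncated ball $B_\rho(x_0)\cap H$ of volume $1/2$ whose spherical portion meets $\pa H$ at the contact angle prescribed by the Young relation associated to $\beta$.

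Next I would identify $\xl(1)\cap H$ with this truncated ball. Because $c_{12}=c_{13}$, the angle law $\sin\theta_1/c_{23}=\sin\theta_2/c_{13}=\sin\theta_3/c_{12}$ forces $\sin\theta_2=\sin\theta_3$; since $\theta_1\in(0,\pi)$ and $\theta_1+\theta_2+\theta_3=2\pi$, the alternative $\theta_2+\theta_3=\pi$ would give $\theta_1=\pi$, so necessarily $\theta_2=\theta_3$. Hence the bounding caps $\xl(1,2)\subset\{x_n>0\}$ and $\xl(1,3)\subset\{x_n<0\}$ are congruent mirror images across $\pa H=\{x_n=0\}$, so $\xl(1)$ is symmetric under the reflection $R$ and $\xl(1)\cap H=\xl(1)\cap\{x_n>0\}$ is exactly the solid region enclosed by $\pa H$ and the spherical cap $\xl(1,2)$; being bounded by a portion of a sphere together with a flat disk sharing its boundary sphere, this region is a truncated ball. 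Moreover $\xl(1)\cap H\subset H$, and $|\xl(1)\cap H|=\tfrac12|\xl(1)|=\tfrac12$ by the symmetry and $|\xl(1)|=1$.

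It then remains to check that the contact angle of $\xl(1,2)$ with $\pa H$ is the one singled out by \cite[Theorem 19.21]{Mag12}. Substituting $\theta_1=2\pi-2\theta_2$ into $\sin\theta_1/c_{23}=\sin\theta_2/c_{13}$ yields $-2\sin\theta_2\cos\theta_2=(c_{23}/c_{13})\sin\theta_2$, i.e.\ $\cos\theta_2=-c_{23}/(2c_{13})=-\beta$, which is precisely the first-order (Young) condition characterizing the capillarity minimizer. Since a truncated ball in $H$ with prescribed volume $1/2$ and prescribed contact angle is determined uniquely up to translation along $\pa H$, we conclude that $\xl(1)\cap H$ is the unique minimizer of \eqref{eq:liquid drop problem anchoring} up to horizontal translations and $\mathcal{L}^n$-null sets. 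I expect the only point requiring care to be aligning the angle and sign conventions of \cite[Chapter 19]{Mag12} with those used in the definition of the lens cluster; the identity $\cos\theta_2=-\beta$ is exactly what reconciles them, and the remainder is a routine transcription.
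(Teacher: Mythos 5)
Your proposal is correct and follows essentially the same route as the paper: verify $\beta=c_{23}/(2c_{13})\in(0,1)$ from \eqref{eq:pos}--\eqref{eq:triangle} and $c_{12}=c_{13}$, invoke \cite[Theorem 19.21]{Mag12} to get the truncated-ball minimizer with Young's contact angle, and match that angle to $\theta_2$ of $\xl$ via $\sin\theta_1/c_{23}=\sin\theta_2/c_{13}$ together with $\theta_1=2\pi-2\theta_2$, arriving at $\cos\theta_2=-\beta$. The only cosmetic difference is that you argue from $\xl$ to the ball while the paper constructs the lens cluster from the ball and checks it satisfies the defining angle relations; the explicit elimination of the spurious case $\theta_2+\theta_3=\pi$ is a nice touch but does not change the argument.
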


\begin{proof}
    First, the assumptions \eqref{eq:pos}-\eqref{eq:triangle} and $c_{12}=c_{13}$ imply that
\begin{align}\notag
  \beta := c_{23}/(2c_{13}) = c_{23}/(c_{12}+c_{13})\in (0,1)\,.  
\end{align}
    The only assumption needed to apply the characterization of minimizers in \eqref{eq:liquid drop problem anchoring} from \cite[Theorem 19.21]{Mag12} is $|\beta|<1$, and thus the unique minimizer up to horizontal translations and Lebesgue null sets for \eqref{eq:liquid drop problem anchoring} is $B \cap H$, where $B$ is the ball determined by the two conditions $|B \cap H| = 1/2$ and
    \begin{align}\label{eq:young's law}
    -e_n \cdot \nu_B(x) =-c_{23}/(2c_{13}) \qquad \forall x\in \partial B \cap \partial H\,.
    \end{align}
    The angle $\theta$ formed by $\nu_B$ and $-e_n$ along $\partial H$ is a $90^\circ$ counterclockwise rotation of the angle $\theta_2$ formed by $\partial (H \setminus B)$ along its singular set $\partial B \cap \partial H$. So the cluster $\X$ given by
\begin{align}\notag
\X(1) = (B \cap H) \cup R(B \cap H)\,, \quad \X(2) = H \setminus B\,, \quad \X(3) = H^c \setminus R(B \cap H)     
\end{align}
is the standard symmetric lens cluster corresponding to the angles $\theta_2=\theta_3$ along the singular sets of $\pa \X(2)$ and $\pa \X(3)$ and $\theta_1 = 2\pi - 2\theta_2$ formed by the two spherical caps comprising $\partial \X(1)$. By multiplying \eqref{eq:young's law} by $-2\sin \theta_2/c_{23}$ on both sides, we obtain
\begin{align}\notag
   \frac{-2\cos \theta_2 \sin \theta_2}{c_{23}} = \frac{\sin \theta_2}{c_{13}}\,.
\end{align}
Since $\theta_2 = \pi - \theta_1/2$, the left hand side simplifies to $\sin \theta_1/c_{23}$. By $c_{12}=c_{13}$ and $\theta_2=\theta_3$, the resulting equations for $\theta_i$ define the standard weighted lens cluster corresponding to the family $\bf c$, so $|\X(j)\Delta\xl(j)|=0$ for $1\leq j \leq 3$. Thus $\xl (1)\cap H$ is Lebesgue equivalent to $B \cap H$, and the theorem is complete.
\end{proof}

We end the preliminaries with a short discussion of weighted double bubbles \cite{Law14}. 

\begin{definition}[Standard weighted double bubble]\label{def:standard weighted double bubble}
    The {\it standard weighted double bubble} $\xbub$ in $\rn$ associated to weights ${\bf c}$ satisfying \eqref{eq:pos}-\eqref{eq:triangle} and $m>0$ is the $(2,1)$-cluster such that
\begin{enumerate}[label=(\roman*)]
    \item $|\xbub(1)|=1$, $|\xbub(2)|=m$,
    \item $\pa \xbub(j,k)$ are three spherical caps, meeting along an $(n-2)$-dimensional sphere $\pa D$ contained in the plane $\{x_n=0\}$, with $\pa \xbub(2) \cap \pa \xbub(3) \subset \{x_n\geq 0\}$, and
    \item for each $j$, the two smooth surfaces forming $\partial \xbub(j)$ and meeting along $\pa D$ form an angle $\theta_j\in (0,\pi)$, and 
\begin{align}\notag
\frac{\sin \theta_1}{c_{23}}=\frac{\sin \theta_2}{c_{13}}=\frac{\sin\theta_3}{c_{12}}\,,    
\end{align}
where $\theta_1+\theta_2+\theta_3=2\pi$. \end{enumerate}
\end{definition}

\noindent In \cite{Law14}, Lawlor has proved that 
\begin{align}\label{eq:minimality of standard bubble}
    \pc(\xbub) \leq \pc (\X)
\end{align}
for any $(2,1)$-cluster $\X$ with volume vector $(1,m,\infty)$, and that equality holds if and only if $\X=\xbub$ up to Lebesgue null sets and rigid motions of $\rn$. The last lemma, for which we omit the proof, follows from the definitions of $\xbub$ and $\xl$.

\begin{lemma}[Convergence of weighted double bubbles to weighted lens clusters]\label{lemma:convergence of weighted double bubbles}
    If $\bf c$ satisfies \eqref{eq:pos}-\eqref{eq:triangle}, then as $m\to \infty$, $\xbub(j)$ converges locally in the Hausdorff distance to $\xl(j)$ for each $1\leq j \leq 3$, and for any $R$ with $\xl(1) \cc B_R(0)$, $\pc(\xbub;B_R(0)) \to \pc (\xl;B_R(0))$.
\end{lemma}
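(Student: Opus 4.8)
\textbf{Proof proposal for Lemma~\ref{lemma:convergence of weighted double bubbles}.}
The plan is to verify the Hausdorff convergence $\xbub(j)\to\xl(j)$ directly from the explicit descriptions of the two clusters, and then to deduce the convergence of relative energies from it together with a short perimeter-continuity argument. The starting observation is that both $\xbub$ and $\xl$ are determined by finitely many scalar parameters: the three opening angles $\theta_1,\theta_2,\theta_3$ (which solve the \emph{same} system $\sin\theta_1/c_{23}=\sin\theta_2/c_{13}=\sin\theta_3/c_{12}$, $\theta_1+\theta_2+\theta_3=2\pi$ in both cases, so they literally coincide), the location of the singular $(n-2)$-sphere $\pa D=\pa D_m$, and the radii of the three spherical caps. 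Fixing the normalization $|\xbub(1)|=|\xl(1)|=1$ and the convention that the singular sphere lies in $\{x_n=0\}$ with $\pa\xbub(2)\cap\pa\xbub(3)\subset\{x_n\ge 0\}$, one can solve for all these quantities as explicit functions of $m$. The key elementary computation is that as $m\to\infty$ the cap separating $\xbub(2)$ from $\xbub(3)$ flattens: its radius tends to $+\infty$ while it continues to pass through $\pa D_m$ at the fixed angles $\theta_2,\theta_3$, so on any fixed ball $B_R(0)$ it converges to the flat piece $\{x_n=0\}\setminus D$ that appears in $\xl$. Simultaneously $\pa D_m\to\pa D$ and the two caps forming $\pa\xbub(1)$ converge to the two spherical caps forming $\pa\xl(1)$ (here the volume normalization pins down the limiting radius, which is exactly the radius of $\xl(1)$). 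I would package this as: after a rigid motion placing $\pa D_m$ appropriately, the boundaries $\pa\xbub(j)$ converge locally in Hausdorff distance to $\pa\xl(j)$, and hence the chambers themselves converge locally in Hausdorff distance, for each $j=1,2,3$.

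For the energy statement, fix $R$ with $\xl(1)\cc B_R(0)$; then for $m$ large we also have $\xbub(1)\cc B_R(0)$ by the Hausdorff convergence. Using \eqref{eq:pc rewrite} we write $\pc(\xbub;B_R(0))=c_1P(\xbub(1);B_R(0))+c_2P(\xbub(2);B_R(0))+c_3P(\xbub(3);B_R(0))$, and similarly for $\xl$. Each chamber's boundary is, inside $B_R(0)$, a finite union of smooth spherical caps (or, in the limit, spherical caps plus a flat annulus) depending smoothly on the finitely many parameters above, which converge as $m\to\infty$; so each of the $\mathcal H^{n-1}$-measures $P(\xbub(j);B_R(0))$ converges to the corresponding $P(\xl(j);B_R(0))$ by an explicit surface-area computation, or more robustly by lower semicontinuity together with the matching convergence of the boundary measures as varifolds/currents on the precompact set $\cl B_R(0)$ (one only needs that no mass escapes to $\pa B_R(0)$, which holds because $\pa\xl(j)$ meets $\pa B_R(0)$ transversally and the $\xbub(j)$ are $C^1$-close to $\xl(j)$ there). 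Summing the three limits gives $\pc(\xbub;B_R(0))\to\pc(\xl;B_R(0))$.

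The only mild subtlety — and the step I would be most careful about — is making sure the relative perimeters do not lose mass at the boundary sphere $\pa B_R(0)$ in the limit, i.e.\ ruling out $\mathcal H^{n-1}(\pa^*\xbub(j)\cap\pa B_R(0))\not\to 0$. This is handled by choosing $R$ (equivalently, almost every $R$) so that $\pa B_R(0)$ is transverse to each $\pa\xl(j)$, which forces $\mathcal H^{n-1}(\pa\xl(j)\cap\pa B_R(0))=0$, and then the $C^1_{\mathrm{loc}}$-convergence of the (finitely many, explicitly parametrized) spherical caps comprising $\pa\xbub(j)$ to those of $\pa\xl(j)$ gives $\mathcal H^{n-1}(\pa\xbub(j)\cap B_R(0))\to\mathcal H^{n-1}(\pa\xl(j)\cap B_R(0))$. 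Everything else is bookkeeping with the explicit spherical-cap geometry, which is why the paper elects to omit the details.
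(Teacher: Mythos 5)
The paper does not actually prove this lemma; it simply says ``The last lemma, for which we omit the proof, follows from the definitions of $\xbub$ and $\xl$.'' Your proposal supplies a proof along exactly the lines the paper is alluding to: use the explicit finite-dimensional parametrization of both clusters (angles, cap radii, radius of the singular $(n-2)$-sphere) and observe that, because the opening angles $\theta_1,\theta_2,\theta_3$ are determined by the same equations for $\xbub$ and $\xl$, the only thing that varies with $m$ is the curvature of the cap $\xbub(2,3)$ and, as a consequence, the remaining radii. Your central claims are correct: as $m\to\infty$ the interface $\xbub(2,3)$ flattens (pressure in chamber $2$ tends to ambient), and the volume normalization $|\xbub(1)|=1$ together with the fixed angles forces $\pa D_m\to\pa D$ and the two caps of $\pa\xbub(1)$ to converge to those of $\pa\xl(1)$. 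The energy convergence then follows exactly as you describe.

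One small imprecision in the write-up: you say ``choosing $R$ (equivalently, almost every $R$) so that $\pa B_R(0)$ is transverse to each $\pa\xl(j)$,'' which reads as if transversality might fail for some admissible $R$ and the lemma would only hold for a.e.\ $R$. In fact, once $\xl(1)\cc B_R(0)$ the only interface that meets $\pa B_R(0)$ is the planar piece $\xl(2,3)\subset\{x_n=0\}$, which intersects $\pa B_R(0)$ in an $(n-2)$-sphere of zero $\hn$-measure for \emph{every} such $R$, so the transversality/no-mass-on-the-sphere issue never arises; this is what makes the ``for any $R$'' in the statement correct. With that clarified, and keeping in mind that your assertions that $\pa D_m\to\pa D$ and that the cap radii converge deserve a line or two of justification from the volume normalization (the shape of the lens-like region bounded by the two inner caps is determined by the angles and the flattening of $\xbub(2,3)$, and the size by $|\xbub(1)|=1$), your argument is complete and consistent with what the paper leaves to the reader.
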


\section{Properties of \texorpdfstring{$\bf c$}{c}-locally minimizing \texorpdfstring{$(N,M)$}{NM}-clusters}\label{subsec:prelim}

We present the relevant properties of the weighted energy $\pc$ and $\bf c$-locally minimizing $(N,M)$-clusters. First, by a result of Ambrosio and Braides \cite[Example 2.8]{AmbBra90} (see also \cite[Section 7]{Whi96}), \eqref{eq:pos}-\eqref{eq:triangle} imply that $\pc$ is lower-semicontinuous, that is, if $\X_m\overset{\loc}{\to}\X$ and $U\subset \rn$ is open, then
\begin{align}\label{eq:lsc}
    \pc(\X;U) \leq \liminf_{m\to \infty} \pc (\X_m;U)\,.
\end{align}
The lower-semicontinuity still holds even if some chambers of $\X_m$ vanish in the local limit, so that the limiting object $\X$ is an $(N',M')$-cluster where $N\leq N'$, $M\leq M'$ and the energy on the left hand side is $\pcprime(\X;U)$, where $\bf c'$ is the subfamily of indices corresponding to non-vanishing chambers. 

\begin{lemma}[Volume density estimate]\label{lemma:density estimates}
If $\bf c$ is a family of weights satisfying \eqref{eq:pos}-\eqref{eq:triangle} and $\X$ is a $\bf c$-locally minimizing $(1,2)$-cluster, then there exists $\Lambda>0$, $\e\in (0,1)$, and $r_0>0$ such that
\begin{align}\label{eq:lambda minimality}
    \pc(\X;B_{r_0}(x)) \leq \pc(\X';B_{r_0}(x)) + \Lambda|\X(1) \Delta \X'(1)|
\end{align}
whenever $\X(j) \Delta \X'(j)\subset B_{r_0}(x)$ for some $x\in \rn$ and each $j$, and
\begin{align}\label{eq:lower volume bound}
    |B_r(y) \cap \X(j)| \geq \e\omega_n r^n \quad \mbox{if $y\in \pa \X(j)$, $r<r_0$, and $1\leq j \leq 3$}\,. 
\end{align}
As a consequence, each $\X(j)^\one$ is open and satisfies $\cl \pa^* \X(j) = \pa \X(j)^\one$, and $\X(1)$ is bounded.
\end{lemma}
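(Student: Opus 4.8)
The plan is to prove Lemma \ref{lemma:density estimates} in three stages: first establish the $\Lambda$-minimality \eqref{eq:lambda minimality}, then derive the lower volume bound \eqref{eq:lower volume bound}, and finally deduce the structural consequences (openness of $\X(j)^\one$, the closure identity, boundedness of $\X(1)$).

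For \eqref{eq:lambda minimality}, the idea is a standard volume-fixing variation. Given a competitor $\X'$ with $\X(j)\Delta\X'(j)\subset B_{r_0}(x)$, the chambers $\X'(2)$ and $\X'(3)$ are unconstrained, but the volume of $\X'(1)$ may have changed by some $\delta = |\X'(1)| - |\X(1)|$ with $|\delta|\leq |\X(1)\Delta\X'(1)|$. I would fix an auxiliary ball $B_\rho(z)$ disjoint from $\bar B_{r_0}(x)$ lying in the interior of one chamber, say $\X(1)^\one$ (or straddling $\X(1,2)$), and restore the volume of $\X(1)$ there by inflating or deflating $\X(1)$ at cost $O(|\delta|)$; this is where one needs $\pa^*\X(1)\neq\emptyset$ and a point of positive density, which holds because $|\X(1)|\in(0,\infty)$. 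Comparing $\pc(\X;B_{r_0}(x))$ to the energy of this modified competitor and absorbing the $O(|\delta|)$ auxiliary cost into $\Lambda|\X(1)\Delta\X'(1)|$ yields \eqref{eq:lambda minimality}. The constant $r_0$ is chosen small enough (relative to $\rho$ and $\mathrm{dist}$ to the auxiliary ball) that the two balls stay disjoint; here one should be slightly careful that $r_0$ can be taken uniform in $x$, which follows since only the size of the auxiliary ball inside $\X(1)$ matters.

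The lower volume bound \eqref{eq:lower volume bound} then follows from \eqref{eq:lambda minimality} by the classical density-estimate argument for $\Lambda$-minimizers of perimeter (see e.g.~\cite[Chapter 21]{Mag12}), applied to each chamber $\X(j)$ viewed through \eqref{eq:pc rewrite}: since $\pc(\X;\cdot)$ is comparable to $\sum_j c_j P(\X(j);\cdot)$ with $c_j>0$, and each $\X(j)$ is an (almost-)perimeter-minimizer by \eqref{eq:lambda minimality} when tested against competitors obtained by removing $\X(j)\cap B_r(y)$ and redistributing it among the other chambers, the differential inequality for $m(r):=|\X(j)\cap B_r(y)|$ combined with the isoperimetric inequality gives $m(r)\geq \e\omega_n r^n$ for $r<r_0$ whenever the density $m(r)$ is positive for all small $r$ — which is exactly the condition $y\in\pa\X(j)$ once we know $\pa^*\X(j)$ has full measure in $\pa\X(j)$.

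For the consequences: the lower bound \eqref{eq:lower volume bound} shows that $\X(j)^\one$ coincides with the topological interior of the representative and that $\rn\setminus\X(j)^\one$ likewise has the complementary density bound, so $\pa\X(j)^\one=\cl\pa^*\X(j)$ in the usual way (points of $\pa\X(j)^\one$ have both $\X(j)$ and its complement with positive density, hence lie in $\cl\pa^*\X(j)$ by a density/Federer argument, and the reverse inclusion is immediate). Finally, boundedness of $\X(1)$: since $\X(1,2)$ and $\X(1,3)$ together make up $\pa^*\X(1)$ up to $\hn$-null sets, and $\X(1)$ has finite volume, if $\X(1)$ were unbounded there would be a sequence of disjoint unit balls each meeting $\pa\X(1)$; by \eqref{eq:lower volume bound} each contributes a fixed amount of volume, contradicting $|\X(1)|<\infty$ — alternatively, one runs the standard argument comparing $\X(1)$ outside a large ball to the empty set, using \eqref{eq:triangle}: removing the far-away part of $\X(1)$ (assigning it to whichever of $\X(2),\X(3)$ it borders) strictly decreases energy by \eqref{eq:triangle} while the lost volume is small, and \eqref{eq:lambda minimality} forbids this unless that part is empty.

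\textbf{Main obstacle.} The delicate point is the uniformity of $r_0$ and $\Lambda$ in the volume-fixing step: one must ensure a single auxiliary ball (or a fixed finite collection) inside $\X(1)$ works to restore volume for \emph{all} perturbations localized in \emph{arbitrary} small balls $B_{r_0}(x)$, including those that intersect the auxiliary ball. The fix is to choose, once and for all, two small disjoint balls of positive density in $\X(1)^\one$ and in $(\X(1)^\one)^c$, take $r_0$ smaller than a third of their mutual distance, and for any perturbation ball $B_{r_0}(x)$ use whichever auxiliary ball is disjoint from it; this keeps the construction and the constants independent of $x$.
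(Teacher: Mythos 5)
Your proposal follows essentially the same strategy as the paper: volume-fixing variations using two disjoint auxiliary balls to obtain the $\Lambda$-minimality \eqref{eq:lambda minimality} uniformly in $x$, then a differential inequality for $m(r)=|\X(j)\cap B_r(y)|$ via the isoperimetric inequality and the rewritten energy \eqref{eq:pc rewrite} (which encodes \eqref{eq:triangle} through $c_j>0$), and finally the standard topological consequences. One small inaccuracy in your ``main obstacle'' paragraph: choosing one auxiliary ball entirely inside $\X(1)^\one$ and one entirely in $(\X(1)^\one)^c$ does not suffice, because each such ball can only change the volume of $\X(1)$ in one direction (remove or add, respectively), and for a perturbation near one auxiliary ball you may need the opposite sign of correction at the other. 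The paper instead takes both auxiliary balls centered at points $z_1,z_2\in\pa^*\X(1)$, so that the diffeomorphism family at either ball can increase or decrease $|\X(1)|$; your alternative suggestion of balls ``straddling $\X(1,2)$'' is the right fix and is what the paper does.
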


\begin{proof}
By the volume fixing variations construction for sets of finite perimeter (e.g. \cite[Lemma 17.21]{Mag12}), we may choose $z_1,z_2\in\pa^* \X(1)$, disjoint balls $B_{|z_1-z_2|/4}(z_\ell)$ for $\ell=1,2$, $\sigma_0>0$, $C>0$, and a pair of one parameter families of diffeomorphisms $\{f_t^\ell\}_t$, $\ell=1,2$, such that for every $\s\in [-\s_0,\s_0]$, there exists $t \in [-C\s, C\s]$ such that for each $1\leq j \leq 3$, $f_t^\ell(\X(j)) \Delta \X(j) \subset\!\subset B_{|z_1-z_2|/4}(z_\ell)$,
    \begin{align*}
        |f_t^\ell(\X(1))|=|\X(1)|+\s\,,\quad\mbox{and}\quad |P(f_t^\ell(\X(j));B_{|z_1-z_2|/4}(z_\ell)) - P(\X(j);B_{|z_1-z_2|/4}(z_\ell))| \leq C|\s|\,.
    \end{align*}
If we let $r_0=\min\{|z_1-z_2|/4,(\sigma_0/\omega_n)^{1/n}\}$, then any $B_{r_0}(x)\subset \rn$ is disjoint from $B_{|z_1-z_2|/4}(z_{\ell_0})$ for some $\ell_0\in\{1,2\}$. Therefore, given any $\X'$ and $B_{r_0}(x)$ as in the statement of the lemma, we can modify $\X'$ on $B_{|z_1-z_2|/4}(z_{\ell_0})$ by replacing each $\X'(j) \cap B_{|z_1-z_2|/4}(z_{\ell_0})$ with $f_t^{\ell_0}(\X(j)) \cap B_{|z_1-z_2|/4}(z_{\ell_0})$, where $t$ is chosen based on $\sigma = |\X(1)| - |\X'(1)|$.  We then test the minimality of $\X$ against this modified $\X'$, which is admissible by our choice of $\sigma$, and \eqref{eq:lambda minimality} is verified with $\Lambda=\Lambda(C,{\bf c})$.

\medskip

Moving on to \eqref{eq:lower volume bound}, fix any $r<r_0$ and $y\in \partial \X(1)$; the cases $y\in \partial \X(j)$ where $j=2,3$ are proved similarly. Let $m(r)=|\X(1) \cap B_r(y)|$. We claim that we may decrease $r_0$ and choose $C_1>0$, both independently of $y$, such that, for every $r<r_0$ with
\begin{align}\label{eq:density r assumption 1}
&\hn (\pa^* \X(j) \cap \pa B_r(y))=0\quad\forall 1 \leq j \leq 3\quad \textup{and} \\ \label{eq:density r assumption 2}
&m'(r) = \hn(\X(1)^\one \cap \pa B_r(y))\,,
\end{align}
we have
\begin{align}\label{eq:diff ineq}
    m(r)^{(n-1)/n}\leq C_1 m'(r)\,.
\end{align}
Note that \eqref{eq:density r assumption 1} and \eqref{eq:density r assumption 2} both hold at almost every $r<r_0$ by the local finiteness of $\hn \mres \pa^* \X(j)$ and differentiating $m$ using the co-area formula, respectively.
The derivation of \eqref{eq:lower volume bound} from \eqref{eq:diff ineq} follows dividing by $m(r)^{(n-1)/n}$, which is valid since $y\in \pa \X(1) = \cl \pa^* \X(1)$ implies that $m(r)>0$ for every $r>0$, and integrating. To prove \eqref{eq:diff ineq} for suitable $r<r_0$, let us suppose
\begin{align}\label{eq:smaller choice}
  \hn(\X(1,2) \cap B_r(y))\leq \hn(\X(1,3) \cap B_r(y))\,;  
\end{align}
the case with the opposite inequality is the same. For the subsequent calculation it is convenient to use the rewritten formula for the energy from \eqref{eq:pc rewrite}. Testing \eqref{eq:lambda minimality} on $B_{r_0}(y)$ with $\X'$ where $\X'(1) = \X(1) \setminus B_r(y)$, $\X'(2)=\X(2)$, and $\X'(3) = \X(3) \cup (B_r(y) \cap \X(1))$ and cancelling like terms, we obtain 
\begin{align}\notag
    (c_1+c_2)&\hn(\X(1,2) \cap B_r(y)) + (c_1+c_3)\hn(\X(1,3) \cap B_r(y))\\ \notag
    &\quad \leq (c_2+c_3)\hn( \X(1,2) \cap B_r(y)) + (c_1+c_3)\hn(\X(1)^\one \cap \pa B_r(y)) + \Lambda m(r)\,,
\end{align}
where we used \eqref{eq:density r assumption 1} and \eqref{eq:cut and paste} to compute $\pc(\X';\pa B_r(y))$. After adding $c_1\hn(\X(1)^\one \cap \pa B_r(y))=c_1m'(r)$ to both sides, we simplify using \eqref{eq:smaller choice} and find
\begin{align}
    c_1 P(\X(1)\cap B_r(y)) = c_1P(\X(1);B_r(y)) + c_1 m'(r) \leq (2c_1+c_3)m'(r) + \Lambda m(r)\,.
\end{align}
Applying the isoperimetric inequality to $\X(1) \cap B_r(y)$ then gives
\begin{align}\notag
  c_1 n\,\omega_n^{1/n}m(r)^{(n-1)/n} \leq (2c_1+\max\{c_2,c_3\}) m'(r) + \Lambda m(r)\,.
\end{align}
Decreasing $r_0$ if necessary based on $\Lambda$ so that we may absorb $\Lambda m(r)$ onto the left hand side, \eqref{eq:diff ineq} follows. To see that each $\X(j)^\one$ is open, if $y\in \X(j)^\one$, then  $|B_{\rho}(y) \cap \X(j)| >(1-\e/2^{n+1}) \omega_n\rho^n$ for some $\rho<r_0$. Then for any $z\in B_{\rho/2}(y)$ and $k\neq j$,
\begin{align}\label{eq:z not in the ball}
|B_{\rho/2}(z) \cap \X(k) | \leq |B_{\rho}(y) \cap \X(k)| \leq \omega_n \rho^n - |B_{\rho}(y) \cap \X(j)| < \e\omega_n\rho^n/2^{n+1}\,.    
\end{align}
By the density estimate \eqref{eq:lower volume bound} on $B_{\rho/2}(z)$ when $z\in \pa \X(k)$, \eqref{eq:z not in the ball} prevents $z$ from belonging to $\pa \X(k)$ for any $k\neq j$. Since $z\in B_{\rho/2}(y)$ was arbitrary, $\pa \X(k) \cap B_{\rho/2}(y) = \varnothing$ for each $k\neq j$. Then by $y\in \X(j)^\one$ (which precludes $B_\rho(y) \subset \X(k)$), this implies that
\begin{align*}
    |B_{\rho /2}(y) \cap \X(k) | = 0\quad \mbox{for }k\neq j\,.
\end{align*}
Thus $B_{\rho/2}(y) \subset \X(j)^\one$, and $\X(j)^\one$ is open. For the characterization of $\cl \pa^* \X(j)$, first note that by \eqref{eq:fed thm} and the fact that $x\in \X(j)^\half$ entails $B_r(x) \cap \X(j)^t\neq \varnothing$ for all $r>0$ and $t\in \{0,1\}$, $\cl \pa^* \X(j) \subset \cl \X(j)^\half\subset \pa \X(j)^\one$. The reverse inclusion follows from the equivalence $\cl \pa^* \X(j) = \{x:0<|B_r(x)\cap \X(j)| < \omega_nr^n\,\forall r\}$ \cite[Eq.~ (12.12)]{Mag12}. Lastly, \eqref{eq:lower volume bound} and $|\X(1)|<\infty$ imply that $\pa \X(1)$ is bounded, which together with $|\X(1)|<\infty$ implies that $\X(1)$ is bounded also.  \end{proof}

\begin{remark}[Bounded proper chambers for $\bf c$-locally minimizing $(N,2)$- and $(N,3-N)$-clusters]\label{remark:bounded proper chambers}
The final conclusion of Lemma \ref{lemma:density estimates}, namely the boundedness of the proper chambers, holds for general $\bf c$-locally minimizing $(N,2)$-clusters by a different argument. First, following Almgren \cite[VI.10-12]{Alm76}, one obtains an almost-minimality inequality 
\begin{align}\label{eq:lambda min ineq}
    \pc(\X;B_r(x)) \leq \pc(\X';B_r(x)) + \sum_{j}\Lambda |\X(j) \Delta \X'(j)|
\end{align}
as in \cite[Theorem 5.1, Lemma 5.2, and Remark 5.3]{NovPaoTor23} in the paper of Novaga-Paolini-Tortorelli. Second, \eqref{eq:lambda min ineq} allows one to run the elimination argument of Leonardi \cite[Theorem 3.1]{Leo01} on any subfamily $\mathcal{I} \subset \{1,\dots, N+M\}$ such that $\# \mathcal{I} = N+M-2$, yielding positive $\eta$ and $\rho$ such that
\begin{align}\label{eq:elimination us}
    \mbox{``if $r<\rho$ and $|B_r(x) \cap (\cup_{j\in \mathcal{I}} \X(j))|\leq \eta r^n$, then $(\cup_{j\in \mathcal{I}} \X(j))\cap B_{r/2}(x)=\varnothing$"}\,.
\end{align}
The boundedness of the proper chambers $\cup_{j=1}^N \X(j)$ when $M=2$ follows from applying \eqref{eq:elimination us} to $\mathcal{I}=\{1,\dots,N\}$, although does not appear to be an immediate consequence when $M>2$ and $N>1$. When $N+M=3$, \eqref{eq:elimination us} applied to $\X(j)$ for each $j$ can be used to deduce lower volume density bounds on every chamber and thus boundedness of proper ones. Although this covers the case of Lemma \ref{lemma:density estimates}, we have included that proof since it is simpler than \eqref{eq:elimination us}. To our knowledge, lower volume density bounds as in \eqref{eq:lower volume bound} are not known for locally minimizing clusters with general weights $\bf c$ satisfying \eqref{eq:pos}-\eqref{eq:triangle} when there are four or more chambers. In the equal weights case, lower density bounds/boundedness of the proper chambers were proved by Novaga-Paolini-Tortorelli \cite[Theorem 2.4]{NovPaoTor23}.
\end{remark}

\begin{remark}[Further properties of $\bf c$-locally minimizing $(N,M)$-clusters]
The property \eqref{eq:elimination us} (see also \cite[Property P]{Whi96}) implies that $\X(j)^\one$ is open for each $j$ and, combined with a first variation argument, that each $\X(j,k)$ is an analytic hypersurface with constant mean curvature $\lambda_{jk}$, which is $0$ when $\min\{j,k\}\geq N+1$. In addition, by an announced result of White \cite{Whi86,Whi96}, under the assumptions \eqref{eq:pos}-\eqref{eq:triangle}, the singular set of the interfaces should have Hausdorff dimension at most $n-2$; see also \cite{ColEdeSpo22}. We also refer the reader to \cite{LawMor94,Cha95,Mor98} and the references therein for more on immiscible fluid clusters.
\end{remark}

\begin{theorem}[Exterior monotonicity formula for $\bf c$-locally minimizing $(N,M)$-clusters]
If $\X$ is a $\bf c$-locally minimizing $(N,M)$-cluster and $R>0$ is such that $\cup_h \E(h) \cc B_R(0)$, then 
\begin{align}\label{eq:ext stationary}
  0=  \sum_{N+1\leq j < k \leq N+M}\int_{\X(j,k) \setminus \cl B_R(0)} c_{jk}\,\mathrm{div}^{\X(j,k)}\,X(x)\, \,d\hn(x)\quad \forall X\in C_c^\infty(\rn \setminus \cl B_R(0);\rn)\,.
\end{align}
If $R$ is also a point of differentiability of the increasing function $r\mapsto \pc(\X;B_r(0))$, which includes almost every $r>0$, then there exists $C(R)\in \mathbb{R}$ such that
\begin{align}\label{eq:ext mon formula}
     \frac{\pc(\X ; B_s(0) \setminus B_R(0))}{s^{n-1}} +\frac{C(R)}{s^{n-1}}\leq \frac{\pc(\X;B_r(0) \setminus B_R(0))}{r^{n-1}} + \frac{C(R)}{r^{n-1}}\qquad \forall R<s<r<\infty\,.
\end{align}
\end{theorem}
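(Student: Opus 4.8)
The plan is to derive the exterior stationarity \eqref{eq:ext stationary} from the local minimality of $\X$, and then to integrate it in the spirit of the classical monotonicity formula for stationary varifolds. First I would observe, as noted in the discussion preceding Definition \ref{def:growth def}, that since $\cup_h \E(h) \cc B_R(0)$, any $(N,M)$-cluster $\X'$ that agrees with $\X$ on $B_R(0)$ automatically has the same volume vector as $\X$, so the constrained local minimality \eqref{eq:min condition} gives \emph{unconstrained} local minimality of $\X$ with respect to perturbations supported in $\rn\setminus\cl B_R(0)$. Plugging in competitors of the form $\X'(j)=\phi_t(\X(j))$ for a flow $\phi_t$ generated by $X\in C_c^\infty(\rn\setminus\cl B_R(0);\rn)$ (and using that the weighted perimeter on $\rn\setminus\cl B_R(0)$ is a finite positive combination $c_1P(\X(1);\cdot)+c_2P(\X(2);\cdot)+c_3P(\X(3);\cdot)$, which reduces to the improper interfaces there since $\X(1)\cc B_R(0)$), the first variation of perimeter formula yields $\frac{d}{dt}\big|_{t=0}\pc(\phi_t(\X);\rn\setminus\cl B_R(0))=\sum_{N+1\le j<k\le N+M}c_{jk}\int_{\X(j,k)\setminus\cl B_R(0)}\mathrm{div}^{\X(j,k)}X\,d\hn$; minimality forces this to vanish, which is \eqref{eq:ext stationary}. (Strictly, one should replace $X$ by $\pm X$ to get both inequalities, or note the derivative exists and a minimum is critical.)

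Next I would convert \eqref{eq:ext stationary} into the monotonicity inequality \eqref{eq:ext mon formula}. The statement that $\mu:=\sum_{N+1\le j<k\le N+M}c_{jk}\,\hn\mres(\X(j,k)\setminus\cl B_R(0))$ is stationary away from $\cl B_R(0)$ is exactly the condition that the associated varifold is stationary in the open set $\rn\setminus\cl B_R(0)$; the standard computation (test \eqref{eq:ext stationary} with radial fields $X(x)=\gamma(|x|)\,x$ for suitable cutoffs $\gamma$, or see \cite[Chapter 17]{Mag12}) shows that $r\mapsto r^{1-n}\mu(B_r(0)\setminus B_R(0))$ plus a correction term is monotone. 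The correction arises precisely because the domain $\rn\setminus\cl B_R(0)$ has a boundary at $\pa B_R(0)$: the flux of the radial field through $\pa B_R(0)$ contributes a boundary term, which after integration becomes a constant $C(R)$ divided by $r^{n-1}$. Concretely, for $R$ a point of differentiability of $r\mapsto\pc(\X;B_r(0))$ (hence of $r\mapsto\mu(B_r(0)\setminus B_R(0))$, since the two differ by the constant $\pc(\X;B_R(0))$ plus lower-order interface contributions on $\pa B_R$ which vanish for a.e.\ $R$), one plugs $X(x)=x\,\gamma_\e(|x|)$ into \eqref{eq:ext stationary}, where $\gamma_\e$ approximates $\mathbf 1_{(R,s)}$, and passes to the limit to obtain a differential inequality of the form $\frac{d}{ds}\big(s^{1-n}[\mu(B_s\setminus B_R)+C(R)]\big)\ge 0$ for a.e.\ $s>R$, with $C(R)$ the boundary flux term at radius $R$; integrating from $s$ to $r$ gives \eqref{eq:ext mon formula}, and the monotonicity of $r\mapsto\pc(\X;B_r(0))$ is immediate from nonnegativity of $\hn$ measure.

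The main obstacle I anticipate is the careful bookkeeping at the inner boundary $\pa B_R(0)$: one must justify that $\X$ is genuinely perimeter-stationary (not merely almost-minimizing) outside $\cl B_R(0)$ — this uses crucially that the finite-volume chamber $\X(1)$ is compactly contained in $B_R(0)$ so the volume constraint is inactive — and one must identify the constant $C(R)$ and show the slicing quantities $\hn(\pa^*\X(j)\cap\pa B_R(0))$ vanish for a.e.\ $R$ so that $r\mapsto\pc(\X;B_r(0))$ is differentiable there and its derivative matches the varifold mass density on $\pa B_r$. The rest is the textbook monotonicity computation (cf.\ \cite[Chapter 17]{Mag12} or \cite{MagNov22}), specialized to an exterior domain. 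One should also record that $r\mapsto\pc(\X;B_r(0))$ is increasing simply because it is an increasing function of the domain, which is why differentiability holds for all but countably many $r$, in particular almost every $r$.
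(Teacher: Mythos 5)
Your proposal is correct in outline and follows essentially the same route as the paper: stationarity \eqref{eq:ext stationary} comes from testing local minimality against flows generated by vector fields supported in $\rn\setminus\cl B_R(0)$ (where the volume constraint is inactive because $\cup_h\E(h)\cc B_R(0)$), and monotonicity \eqref{eq:ext mon formula} then follows from the textbook radial test-field computation with a boundary flux correction at $\pa B_R(0)$. Two remarks. First, a monotone function is differentiable \emph{almost everywhere} by Lebesgue's theorem, not merely outside a countable set --- the nondifferentiability set can be uncountable (though Lebesgue-null), so ``all but countably many $r$'' should read ``a.e.~$r$''; this does not affect the conclusion you need. Second, the step you gloss over --- passing $\gamma_\e\to\mathbf{1}_{(R,s)}$ and identifying $C(R)$ --- is exactly where the paper invokes the differentiability of $r\mapsto\pc(\X;B_r(0))$ at $R$: the cutoff estimate \eqref{eq:bounded first var} controls the annular mass near $\pa B_R(0)$ and shows the exterior varifold $V$ has bounded first variation on all of $\rn$, so that Riesz representation produces a finite Radon measure $\sigma$ on $\pa B_R(0)$ and a unit field $\nu_V^{\rm co}$, and the constant is the concrete quantity $C(R)=\tfrac{-1}{n-1}\int x\cdot\nu_V^{\rm co}(x)\,d\sigma(x)$. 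The ensuing exterior monotonicity identity is then cited from \cite[Theorem 2.7.(i)]{MagNov22} (see also \cite{Sim85}); your informal ``boundary flux term'' is correct but needs this bounded-first-variation step to be well-defined, and that step is where the differentiability hypothesis on $R$ is actually consumed.
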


\begin{proof}
It is convenient to use the language of varifolds (see e.g. \cite{Sim83}), so we introduce the locally $\hn$-rectifiable set
\begin{align*}
S=\cup_{N+1\leq j < k \leq N+M }\X(j,k)\setminus \cl B_R(0) = \cup_{1\leq i < i' \leq M} \big[\pa^* \F(i) \cap \pa^* \F(i')\big] \setminus \cl B_R(0)\,,
\end{align*}
Borel measurable multiplicity function
\begin{align}
    \theta = \sum_{N+1\leq j < k \leq N+M} c_{jk} \mathbf{1}_{\X(j,k)\setminus \cl B_R(0)}\,,
\end{align}
and the rectifiable varifold $V=\var(S,\theta)$ induced by $S$ and $\theta$. The assumption that $\cup_h \E(h)\cc B_R(0)$ implies that for any $X\in C_c^\infty(\rn \setminus \cl B_R(0);\rn)$, every cluster in the one parameter family of $(N,M)$-clusters induced by $\X$ and the one parameter family of diffeomorphisms with initial velocity $X$ has volume vector equal to $\m(\X)$. Testing the $\bf c$-local minimality of $\X$ against this family gives \eqref{eq:ext stationary}. In terms of $V$, this shows that $V$ is stationary in $\rn \setminus \cl B_R(0)$. Denoting by $\|V\|$ the weight of $V$, we have $\|V\|(B_R(0))=0$ by construction, and thus
\begin{align}\label{eq:ext stationary proof}
    \delta V(X) = \int \mathrm{div}^S X(x)\,d\|V\|(x)=0 \qquad \forall X \in C_c^\infty(\rn \setminus \pa B_R(0);\rn)\,.
\end{align}

\medskip

Turning now towards \eqref{eq:ext mon formula}, we first claim that there exists a Radon measure $\sigma$ supported on $\pa B_R(0)$ and Borel measurable $\nu^{\rm co}_V:\pa B_R(0) \to \rn$ with $|\nu^{\rm co}_V|=1$ $\sigma$-a.e.~ on $\pa B_R(0)$ such that
\begin{align}\label{eq:div thm}
    \delta V(X) = \int X(x) \cdot \nu^{\rm co}_V(x)\,d\sigma(x)\qquad \forall X \in C_c^\infty(\rn;\rn)\,.
\end{align}
To prove \eqref{eq:div thm}, let $\{\varphi_\e\}_{\e>0}\subset C_c^\infty(\rn;[0,1])$ be a family of cut-off functions such that $\varphi_\e = 1$ on $\pa B_R(0)$, $\spt\, \varphi_\e \subset  B_{R+2\e}(0)\setminus \cl B_{R-2\e}(0)$, and $\|\nabla \varphi \|_{L^\infty}\leq \e^{-1}$. Then by \eqref{eq:ext stationary proof} and our differentiability assumption on $R$,
\begin{align}\notag
    \delta V(X) &= \lim_{\e \to 0} \int \mathrm{div}^S\, \big[X(1-\varphi_\e)\big](x) \,d\|V\|(x) + \int \mathrm{div}^S\, \big[X\varphi_\e\big](x) \,d\|V\|(x) \\ \notag
    &\leq  \limsup_{\e \to 0} \big[\e^{-1}\|X\|_{L^\infty} + \|\mathrm{div}^S\, X\|_{L^\infty}\big]\|V\|(B_{R+2\e}(0)\setminus \cl B_{R-2\e}(0))\\ \label{eq:bounded first var}
    &\leq C(R,\X)\|X\|_{L^\infty}\,.
\end{align}
Thus $V$ has bounded first variation in $\rn$, and so by appealing to the Riesz representation theorem, we obtain a $\rn$-valued Radon measure $\mu$ such that
\begin{align}\notag
 \delta V(X) = \int X(x) \cdot d\mu(x)\qquad \forall X \in C_c^\infty(\rn;\rn)\,.   
\end{align}
But by \eqref{eq:ext stationary proof}, $|\mu|(\rn \setminus \pa B_R(0))=0$, which implies that, setting $\sigma=|\mu|$, $\s$ is supported on $\pa B_R(0)$ and there exists $\nu^{\rm co}_V:\pa B_R(0) \to \rn$ with $|\nu^{\rm co}_V|=1$ $\sigma$-a.e.~ such that \eqref{eq:div thm} holds. The monotonicity formula derived from \eqref{eq:div thm} (following the computations in \cite[Section 17]{Sim83}) with
\begin{equation}\notag
C(R) = \frac{-1}{(n-1)}\int x\cdot \nu^{\rm co}_V(x)\,d\sigma(x)    
\end{equation}
can be found in \cite[Theorem 2.7.$(i)$]{MagNov22}; see also \cite[Equation 1.8]{Sim85}.
\end{proof}

\noindent As an immediate corollary of the previous two results, we can justify Definition \ref{def:growth def}.

\begin{corollary}[Density at infinity]\label{corollary:density at infinity}
    If $\X$ is a $\bf c$-locally minimizing $(N,M)$-cluster and $\cup_h \E(h)\subset B_R(0)$ for some $R>0$, as is the case for $N=1$ and $M=2$, then
\begin{align}\label{eq:dens at inf in corollary}
   \Theta_\infty(\X)= \lim_{r\to \infty}\frac{\pc(\X;B_r(0))}{r^{n-1}}
\end{align}
exists and equals $\lim_{r\to \infty}r^{1-n}\pc(\X;B_r(0)\setminus \cl B_R(0))$ for every $R>0$.
\end{corollary}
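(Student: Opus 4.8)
The plan is to deduce Corollary \ref{corollary:density at infinity} directly from the two results immediately preceding it, namely the exterior monotonicity formula \eqref{eq:ext mon formula} and the fact (established just before Definition \ref{def:growth def}, and also recorded in Lemma \ref{lemma:density estimates}) that for a $\bf c$-locally minimizing $(1,2)$-cluster the proper chamber $\X(1)$ is bounded, so that $\cup_h\E(h)\subset B_R(0)$ for all large $R$. First I would fix a radius $R$ which is simultaneously a point of differentiability of the increasing function $r\mapsto \pc(\X;B_r(0))$ and large enough that $\cup_h\E(h)\cc B_R(0)$; almost every $R$ works by the theorem, so such $R$ exist. Then \eqref{eq:ext mon formula} says that
\[
g(s):=\frac{\pc(\X;B_s\setminus B_R)}{s^{n-1}}+\frac{C(R)}{s^{n-1}}
\]
is monotone nondecreasing in $s$ on $(R,\infty)$. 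A bounded monotone function has a limit, but here $g$ need not be bounded a priori; however it is monotone, so $\lim_{s\to\infty}g(s)$ exists in $(-\infty,+\infty]$. Since $C(R)/s^{n-1}\to 0$, this gives that $\lim_{s\to\infty} s^{1-n}\pc(\X;B_s\setminus \cl B_R(0))$ exists in $(-\infty,+\infty]$; and it is in fact finite and nonnegative, because $\pc(\X;B_s\setminus B_R)\le \pc(\X;B_s)$ and the latter is controlled: outside $B_R$ the interface is an exterior minimal surface, whose area grows at most like $s^{n-1}$ by the (stationary) monotonicity formula, or alternatively one simply compares against the competitor that leaves $\X$ unchanged inside $B_R$ and fills $B_s\setminus B_R$ appropriately. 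Either way, the limit $\lim_{s\to\infty}s^{1-n}\pc(\X;B_s\setminus\cl B_R(0))$ exists and is a finite number.

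Next I would remove the annulus and show the limit is the same with $B_s\setminus \cl B_R(0)$ replaced by $B_s(0)$, and that it is independent of the choice of $R$. For the first point, write
\[
\frac{\pc(\X;B_s(0))}{s^{n-1}}=\frac{\pc(\X;B_s\setminus \cl B_R(0))}{s^{n-1}}+\frac{\pc(\X;\cl B_R(0))}{s^{n-1}},
\]
using that $\pc(\X;\cdot)$ is a finite Radon measure on the bounded set $\cl B_R(0)$ — finite because $\X$ is $\bf c$-locally minimizing, hence has locally finite interfacial measure, here combined with $\pa\X(1)$ bounded — so the last term is a constant divided by $s^{n-1}$ and tends to $0$ as $s\to\infty$. (One should be slightly careful that $\pc(\X;\pa B_R(0))=0$ for our chosen $R$, which holds since $R$ is a differentiability point; this lets us split cleanly between $\cl B_R$ and $B_s\setminus\cl B_R$.) Hence $\lim_{s\to\infty}s^{1-n}\pc(\X;B_s(0))$ exists and equals $\lim_{s\to\infty}s^{1-n}\pc(\X;B_s\setminus\cl B_R(0))$. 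For independence of $R$: if $R'>R$ are two admissible radii, then $\pc(\X;B_s\setminus\cl B_{R'}(0))$ and $\pc(\X;B_s\setminus\cl B_R(0))$ differ by $\pc(\X;B_{R'}\setminus\cl B_R(0))$, a fixed finite constant, which again contributes $0$ in the limit after dividing by $s^{n-1}$. This establishes \eqref{eq:dens at inf in corollary} and the stated equality for every $R>0$ — passing from "almost every $R$" to "every $R$" in the last clause is immediate since the quantity $\lim_{s\to\infty}r^{1-n}\pc(\X;B_s\setminus\cl B_r(0))$, once shown to exist for one $r$, exists for all $r$ by the same finite-correction argument and is independent of $r$.

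The only point that requires a little care — and what I would flag as the main (minor) obstacle — is justifying that the monotone limit in \eqref{eq:ext mon formula} is actually \emph{finite}, i.e. ruling out $\Theta_\infty(\X)=+\infty$. The cleanest route is to invoke that $S=\cup_{i<i'}[\pa^*\F(i)\cap\pa^*\F(i')]\setminus\cl B_R(0)$, equipped with its multiplicity $\theta$, is a stationary rectifiable varifold in $\rn\setminus\cl B_R(0)$ (this is exactly \eqref{eq:ext stationary}/\eqref{eq:ext stationary proof}), so by the monotonicity formula for stationary varifolds together with the finiteness of $\|V\|$ on compact subsets of $\rn\setminus\cl B_R(0)$ — which follows from $\X$ being $\bf c$-locally minimizing, hence having locally finite interfaces — the density ratio $\|V\|(B_s)/s^{n-1}$ is bounded as $s\to\infty$; since $\theta\ge \min_{j<k}c_{jk}>0$ on its support and $\theta\le\max c_{jk}$, the same bound holds for $s^{1-n}\pc(\X;B_s\setminus\cl B_R(0))$ up to the constant $\max c_{jk}/\min c_{jk}$. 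Combined with the monotonicity, the limit exists and is finite, completing the proof.
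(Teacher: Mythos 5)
Your overall structure matches what the paper clearly intends (it declares this an ``immediate corollary'' of the exterior monotonicity theorem and the boundedness of the proper chambers, without writing out a proof): take $R$ a differentiability point of $r\mapsto\pc(\X;B_r(0))$ with $\cup_h\E(h)\cc B_R(0)$, use \eqref{eq:ext mon formula} to get a monotone quantity whose correction $C(R)/s^{n-1}$ dies at infinity, and then note that swapping $B_s\setminus\cl B_R(0)$ for $B_s(0)$ (or changing $R$) only perturbs the numerator by a fixed finite constant, which vanishes after dividing by $s^{n-1}$. That is correct, and the observation that differentiability of $r\mapsto\pc(\X;B_r(0))$ at $R$ forces $\pc(\X;\pa B_R(0))=0$ (so the split across $\pa B_R(0)$ is clean) is a good catch. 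Also note the corollary as stated only asserts \emph{existence} of the limit and the equality of the two expressions, both of which follow from exactly what you wrote; you do not strictly need finiteness here.

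However, the argument you single out as the ``cleanest route'' to finiteness is flawed. Stationarity of $V$ in $\rn\setminus\cl B_R(0)$ combined with local finiteness of $\|V\|$ does \emph{not} bound the density ratio $\|V\|(B_s)/s^{n-1}$ as $s\to\infty$: the monotonicity formula makes this ratio (up to the vanishing correction) \emph{nondecreasing}, so its limit exists but could a priori be $+\infty$. A stack of parallel hyperplanes at heights $\{k^2:k\in\Z\}$ is stationary with locally finite mass yet has infinite density at infinity, illustrating that one cannot get an upper bound from stationarity alone. The upper bound genuinely requires \emph{minimality}, via the comparison you mention only in passing: test $\X$ in $B_s$ against a competitor equal to $\X$ on $B_R$ and given by two half-balls on $B_s\setminus B_R$, which yields $\pc(\X;B_s)\leq C\,s^{n-1}$ (this is in the spirit of \eqref{eq:basic energy bound 2}). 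That comparison is what the paper relies on implicitly; finiteness of $\Theta_\infty(\X)$ is indeed used later, e.g.\ in the proof of Corollary \ref{corollary:tangent cones at infinity} to obtain $\sup_m\pc(\X/R_m;B_R)<\infty$.
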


\noindent The next corollary establishes the existence up to subsequences of blow-down clusters.

\begin{corollary}[Existence of blow-down clusters]\label{corollary:tangent cones at infinity}
If $\bf c$ satisfies \eqref{eq:pos}-\eqref{eq:triangle}, $\X$ is a $\bf c$-locally minimizing $(N,M)$-cluster such that $\cup_h \E(h)\subset B_R(0)$ for some $R>0$, and $R_m\to \infty$ then, up to a subsequence, there exists $M' \leq M$, subfamily $\mathcal{I}:=\{j_1,\dots, j_{M'}\}\subset \{N+1,\dots, N+M\}$ and a conical $(0,M')$-cluster $\X_\infty$ such that, setting ${\bf c}'=\{c_{jk}\}_{j,k\in \mathcal{I}}$, $\X_\infty$ is ${\bf c}'$-locally minimizing,
\begin{align}\label{eq:L1 conv of blow-downs}
    &\X(j_\ell)/R_m \overset{\loc}{\to} \X_\infty(\ell)\quad \forall 1\leq \ell \leq M'\,, \qquad \mbox{and}\\ \label{eq:density of tangent cone}
    &\pcprime(\X_\infty;B_1(0))= \Theta_\infty(\X)\,.
\end{align}
\end{corollary}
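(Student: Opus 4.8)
The plan is to run a standard blow-down/compactness argument using the exterior monotonicity formula and the lower-semicontinuity of $\pc$, taking care that proper chambers disappear in the limit. First I would pass to the rescaled clusters $\X_m := \X/R_m$, noting that each $\X_m$ is $\bf c$-locally minimizing (scale invariance of $\pc$ and of the minimality condition \eqref{eq:min condition}, since volume constraints on the proper chambers become vacuous once $R_m\to\infty$ because $\cup_h\E(h)\subset B_R(0)$ implies $\cup_h\X_m(h)\subset B_{R/R_m}(0)\cc B_1(0)$ for $m$ large; indeed once $R/R_m<r$ the test clusters $\X'$ in \eqref{eq:min condition} supported in $B_r(0)$ automatically have the same proper-chamber volumes). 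Then I would invoke the almost-minimality/volume density bounds from Lemma \ref{lemma:density estimates} and Remark \ref{remark:bounded proper chambers}, together with the exterior monotonicity formula \eqref{eq:ext mon formula}, to get uniform local perimeter bounds $\sup_m \pc(\X_m;B_\rho(0)\setminus\cl B_{R/R_m}(0))\le C(\rho)$ for each $\rho$; combined with a diagonal argument and the compactness theorem for sets of locally finite perimeter (applied to each chamber), I extract a subsequence along which $\X_m(j)\overset{\loc}{\to}$ some limit for each $j$.

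Next I would identify the limit. The proper chambers $\X_m(h)$, $1\le h\le N$, shrink to the origin and hence converge in $L^1_{\loc}$ to the empty set, so only (a subfamily of) the improper chambers survive; relabel the surviving ones by $\mathcal{I}=\{j_1,\dots,j_{M'}\}\subset\{N+1,\dots,N+M\}$, giving the $(0,M')$-cluster $\X_\infty$ and \eqref{eq:L1 conv of blow-downs}. The lower-semicontinuity \eqref{eq:lsc}, in the strengthened form stated right after it (allowing chambers to vanish so that the limiting energy is $\pcprime$), gives $\pcprime(\X_\infty;U)\le\liminf_m\pc(\X_m;U)$ for open $U$. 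To see $\X_\infty$ is ${\bf c}'$-locally minimizing one runs the usual cut-and-paste comparison: given a competitor for $\X_\infty$ supported in $B_\rho(0)$, transplant it onto $\X_m$ for large $m$ on the annulus avoiding $B_{R/R_m}(0)$, use $\bf c$-minimality of $\X_m$ plus the lower bound on $\pc(\X_m;\cdot)$ near $\pa B_\rho$ coming from monotonicity to control the boundary error term, and pass to the limit; this is the usual argument for minimality of blow-down limits. Conicality of $\X_\infty$ follows from the monotonicity formula \eqref{eq:ext mon formula}: after blow-down the constant $C(R)$ term scales away, so $r\mapsto r^{1-n}\pcprime(\X_\infty;B_r(0))$ is constant, which by the equality case in the monotonicity formula (standard varifold argument, using \eqref{eq:ext stationary} to see the relevant varifold is stationary in $\rn\setminus\{0\}$ and then in all of $\rn$ since the density bound kills any mass concentration at $0$) forces the supporting varifold to be a cone.

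Finally, for the density identity \eqref{eq:density of tangent cone}: by the annular form of Corollary \ref{corollary:density at infinity}, $\Theta_\infty(\X)=\lim_{r\to\infty}r^{1-n}\pc(\X;B_r(0)\setminus\cl B_R(0))$, and rescaling gives $r^{1-n}\pc(\X;B_r(0)\setminus\cl B_R(0)) = (r/R_m)^{1-n}\pc(\X_m;B_{r/R_m}(0)\setminus\cl B_{R/R_m}(0))$ after setting $r=R_m$, so $\Theta_\infty(\X)=\lim_m \pc(\X_m;B_1(0)\setminus\cl B_{R/R_m}(0))$. Lower-semicontinuity gives $\pcprime(\X_\infty;B_1(0))\le\Theta_\infty(\X)$, while the reverse inequality comes from the upper-semicontinuity of perimeter on the closed ball under the monotonicity-formula bounds — equivalently from the fact that the blow-down cannot lose mass in the interior of $B_1(0)$, which again uses the monotonicity formula to rule out escape of energy to $\pa B_1(0)$ (one compares $\pcprime(\X_\infty;B_1)$ from below with the monotone quantity evaluated just inside radius $1$). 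I expect the main obstacle to be this last density-preservation step together with the minimality transfer in the presence of the shrinking ``hole'' $B_{R/R_m}(0)$: one must make sure the cut-and-paste competitor construction and the energy accounting are not spoiled by the moving inner boundary, which is handled precisely by the uniform monotonicity bound \eqref{eq:ext mon formula} providing control of $\pc(\X_m;\cdot)$ on small annuli near $\pa B_{R/R_m}(0)$.
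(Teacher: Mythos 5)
Your overall strategy matches the paper's: rescale, extract a subsequential limit by compactness with uniform energy bounds coming from Corollary \ref{corollary:density at infinity}, observe the proper chambers vanish, transfer minimality to the blow-down by cut-and-paste, deduce constant density, and conclude conicality via stationarity of the associated varifold. The skeleton is right, and the place you flagged as the main risk (the shrinking ``hole'' $B_{R/R_m}$ and the gluing boundary) is indeed where the work lies.

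Two of your technical choices diverge from the paper and, as stated, are not quite the right tools. First, you propose to control the gluing error along the comparison sphere $\pa B_\rho$ by a ``lower bound on $\pc(\X_m;\cdot)$ near $\pa B_\rho$ from monotonicity.'' The monotonicity formula gives upper bounds on scaled energy, not the relevant estimate; what is actually needed is to choose the gluing radius $R'$ by the co-area formula so that $\hn\big((\X(j_\ell)^\one/R_m \,\Delta\, \X_\infty(\ell)^\one)\cap\pa B_{R'}\big)\to 0$ and $\hn\big((\X(j)^\one/R_m)\cap\pa B_{R'}\big)\to 0$ for the vanishing chambers; the ``hole'' contribution is then controlled simply by $\sum_j P(\X(j))/R_m^{n-1}\to 0$, not by monotonicity. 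Second, for the lower bound $\pcprime(\X_\infty;B_1(0))\geq\Theta_\infty(\X)$ you invoke ``upper-semicontinuity of perimeter on the closed ball'' (false in general) or a ``no-mass-loss'' argument; the paper instead reruns the same cut-and-paste competitor, now testing $\X/R_m$ in $B_R(0)$ against a cluster agreeing with $\X_\infty$ inside and $\X/R_m$ outside, and uses minimality of $\X/R_m$ to get $\Theta_\infty(\X)R^{n-1}\leq\pcprime(\X_\infty;B_R(0))$ for a.e.~$R$. That second comparison is the idea you are missing; once you have it, both inequalities follow from the same mechanism and the equality case is clean. With these two corrections your proof would coincide with the paper's.
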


\begin{proof}
    By Corollary \ref{corollary:density at infinity}, $\sup_m \pc(\X/R_m;B_R(0)) < \infty$ for every $R>0$. Therefore, the existence of a (not relabelled) subsequence of radii $R_m\to \infty$, $M' \leq M$, subfamily $\{j_1,\dots,j_{M'}\}\subset \{N+1,\dots,N+M\}$, and $(0,M')$-cluster $\X_\infty$ such that \eqref{eq:L1 conv of blow-downs} holds is a consequence of \eqref{eq:pos} and compactness in $BV$. Furthermore, by \eqref{eq:lsc} and \eqref{eq:dens at inf in corollary}, we have
\begin{align}\label{eq:upper bound on density}
    \pcprime(\X_\infty;B_R(0)) \leq \liminf_{m\to \infty}\pc(\X/R_m;B_R(0)) =\Theta_\infty(\X) R^{n-1}\qquad \forall R>0\,.
\end{align}
We claim now that for any $(0,M')$-cluster $\X'$ with $\X'(\ell) \Delta \X_\infty(\ell) \cc B_R(0)$ for some $R>0$ and each $\ell$,
\begin{align}\label{eq:min ineq in tangent proof}
    \pcprime(\X_\infty;B_R(0)) \leq \pcprime(\X';B_R(0))\,.
\end{align}
To prove \eqref{eq:min ineq in tangent proof}, choose $R'<R$ such that $\X'(\ell)\Delta \X_\infty(\ell) \cc B_{R'}(0)$ for each $\ell$ and 
\begin{equation}
\begin{aligned}\label{eq:slice difference vanishes}
&\hn\big((\X(j)^\one / R_m) \cap \pa B_{R'}(0) \big)\to 0\quad \forall j\notin\mathcal{I}\,,\quad\mbox{and}  \\ 
&\hn\big(((\X(j_{\ell})^\one/R_m)\Delta \X_\infty(\ell)^\one) \cap \partial B_{R'}(0)\big) \to 0\quad \mbox{for all $1\leq \ell \leq M'$}\,; 
\end{aligned}
\end{equation}
this is possible by the co-area formula and \eqref{eq:L1 conv of blow-downs}. Let $\X_m'$ be the $(N,M)$-clusters defined by
\begin{align}\notag
    \X_m'(j)&= \X(j) \quad \mbox{if $1\leq j \leq N$}\,, \\ \notag
    \X_m'(j_\ell) &= \big[R_m(\X'(\ell)\cap B_{R'}(0))\cup (\X(j_\ell)\setminus B_{R_mR'}(0))\big]\setminus \cup_{1\leq j\leq N} \X(j)\quad \mbox{if $1\leq \ell \leq M'$}\,, \\ \notag
    \X'_m(j) &= \X(j) \setminus B_{R_mR'}(0) \quad\mbox{if $N+1\leq j \leq N+M$, $j\notin \mathcal{I}$}\,.
\end{align}
Then by \eqref{eq:lsc}, \eqref{eq:min condition} applied to the $\bf c$-local minimizers $\X/R_m$, and \eqref{eq:cut and paste},
\begin{align}\notag
    \pcprime(\X_\infty;B_{R'}(0)) &\leq \liminf_{m\to \infty} \pc (\X/R_m;B_{R'}(0))
    \leq \liminf_{m\to \infty} \pc (\X'_m/R_m; B_{R'}(0))\\ \notag
    &\leq \liminf_{m\to \infty}\bigg[\sup_{j<k} c_{jk}\sum_{1\leq j\leq N} P(\X(j))/R_m^{n-1}\bigg] + \pcprime (\X';B_{R'}(0)) \\ \notag
    &\qquad + \liminf_{m\to \infty}\sup_{j<k} c_{jk} \sum_{j\in \{N+1,\dots,N+M\}\setminus \mathcal{I}} \hn\big((\X(j)^\one / R_m) \cap \pa B_{R'}(0) \big) \\ \notag
    &\qquad + \liminf_{m\to \infty}\sup_{j<k} c_{jk} \sum_{1\leq \ell \leq M'}\hn\big(((\X(j_{\ell})^\one/R_m)\Delta \X_\infty(\ell)^\one) \cap \partial B_{R'}(0)\big)\,.
\end{align}
By the fact that $\sum_{j=1}^N P(\X(j))$ is finite and \eqref{eq:slice difference vanishes}, \eqref{eq:min ineq in tangent proof} follows. 

\medskip

We now choose representatives for each $\X_\infty(\ell)$ such that $\cl \partial^* \X_\infty(\ell) = \partial \X_\infty(\ell)$ (see e.g. \cite[Remark 15.3]{Mag12}). Since $\X_\infty$ also satisfies the minimality inequality \eqref{eq:min ineq in tangent proof}, $\X_\infty$ is a $\bf c'$-locally minimizing $(0,M')$-cluster. It remains to prove \eqref{eq:density of tangent cone} and that $\X_\infty$ is conical (each chamber is a cone). First, we claim that
\begin{align}\label{eq:same density for all r}
 \pcprime(\X_\infty;B_R(0)) = \Theta_\infty(\X) R^{n-1}\qquad \mbox{for a.e.~ $R>0$}\,.  
\end{align}
By \eqref{eq:upper bound on density}, we know that $\pcprime(\X_\infty;B_R(0)) \leq \Theta_\infty(\X) R^{n-1}$ for all $R>0$. The reverse inequality is obtained by choosing $R$ such that \eqref{eq:slice difference vanishes} holds at $R$ (which is a.e.~ $R>0$) and recalling that
\begin{align}\notag
    \Theta_\infty(\X) R^{n-1}= \lim_{m\to \infty} \pc(\X;B_{R_mR}(0))/R_m^{n-1}\,.
\end{align}
We then test the minimality of $\X/R_m$ on $B_{R}(0)$ against the competitor which agrees with $\X_\infty$ inside $B_{R}(0)$ and $\X/R_m$ outside. By a similar argument as in the preceding paragraph, this yields
\begin{align}\notag
 \Theta_\infty(\X)R^{n-1} \leq   \pcprime(\X_\infty;B_R(0))\quad \mbox{for a.e.~ $R>0$}\,,
\end{align}
which finishes the proof of \eqref{eq:same density for all r}. Now $\X_\infty$ is a $\bf c'$-locally minimizing $(0,M')$-cluster on all of space, and so the varifold 
\begin{align}\notag
    V = \var \Big(\bigcup_{1\leq \ell < \ell' \leq M'}\X_\infty(\ell,\ell'), \sum_{1\leq \ell<\ell'\leq M'}c_{{j_\ell}{j_{\ell'}}}\mathbf{1}_{\X_\infty(\ell,\ell')} \Big)
\end{align}
is stationary in all of space. Since $V$ has constant energy density by \eqref{eq:same density for all r} and is stationary, it is a cone, and thus so is $\X_\infty$.
\end{proof}

\begin{remark}[Uniqueness of the blow-down cluster]
    The uniqueness of blow-downs of a $\bf c$-locally minimizing cluster $\X$ is not known in full generality, although it should hold in some specific cases. For example, if a single blow-down has two non-empty chambers and interface which has an isolated singularity at the origin, then \cite[Theorem 1.1]{MinEde24} or versions of the blow-up arguments in \cite{AllAlm81,Sim83a,EngSpoVel19} adapted to blow-downs of exterior minimal surfaces (see also \cite[pg 269]{Sim85a}) yield uniqueness. The next corollary proves uniqueness and decay when some blow-down is a pair of halfspaces.
\end{remark}

\begin{corollary}[Asymptotic expansion for $\bf c$-locally minimizing $(N,2)$-clusters with planar growth]\label{corollary:asymptotic expansion}
    If $\X$ is a $\bf c$-locally minimizing $(N,2)$-cluster in $\rn$ with bounded proper chambers, and in addition $\Theta_\infty(\X) = c_{(N+1)(N+2)}\omega_{n-1}$ if $n\geq 8$, then, up to a rotation, there exists $R_0>0$ such that on $\rn \setminus C_{R_0}$, $\partial \X\setminus C_{R_0}=\X(N+1,N+2) \setminus C_{R_0}$ coincides with the graph of a solution $u:\mathbb{R}^{n-1}\setminus B_{R_0}^{n-1}(\overline{x})$ to the minimal surface equation satisfying the expansion at infinity
\begin{equation}\label{eq:asymptotic exp}
  u(\ovx) = \begin{cases}
  a & \overline{x}\in \mathbb{R},\,\, n=2\,,\\ 
    a + \displaystyle\frac{b}{|\overline{x}|^{n-3}}+ \displaystyle\frac{\overline{c}\cdot \overline{x}}{|\overline{x}|^{n-1}}+ \mathrm{O}(|\overline{x}|^{1-n})  & \overline{x}\in \R^{n-1},\,\,n\geq 3\,, \\ 
\end{cases}
\end{equation}
for some $a,b \in \mathbb{R}$ and $\overline{c} \in \mathbb{R}^{n-1}$.
\end{corollary}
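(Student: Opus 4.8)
The plan is to combine the blow-down analysis of Corollary \ref{corollary:tangent cones at infinity} with the interior regularity theory for the interface $\X(N+1,N+2)$ and the known asymptotic expansions for exterior minimal graphs. First I would observe that the proper chambers being bounded means there is $R>0$ with $\cup_h \E(h)\cc B_R(0)$, so that by the discussion preceding Definition \ref{def:growth def} the interface $\Sigma:=\X(N+1,N+2)$ is, outside $\cl B_R(0)$, a smooth embedded hypersurface solving the minimal surface equation distributionally, and it is the only interface present there since $\X(N+1)$ and $\X(N+2)$ are the only chambers meeting $\rn\setminus\cl B_R(0)$ up to an $\hn$-null set (by \eqref{eq:hn partition} and boundedness of the proper chambers). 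Next I would invoke Corollary \ref{corollary:tangent cones at infinity}: any blow-down is a ${\bf c}'$-locally minimizing conical $(0,M')$-cluster with $M'\le 2$. When $n\le 7$ there are no singular area-minimizing cones, and when $n\ge 8$ the hypothesis $\Theta_\infty(\X)=c_{(N+1)(N+2)}\omega_{n-1}$ forces, via \eqref{eq:density of tangent cone} and the constancy theorem / Allard's theorem (as in Remark \ref{remark:n>7}), that the blow-down varifold is a multiplicity-one hyperplane; hence $M'=2$ and the blow-down cluster is a pair of complementary halfspaces. After a rotation we may take the limiting hyperplane to be $H=\{x_n=0\}$.

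The second step is to upgrade this single blow-down statement to genuine decay of $\Sigma$ toward $H$. Here I would use the sheeting/regularity theory for stable minimal hypersurfaces together with Allard's regularity theorem applied at large scales: since the blow-down at scale $R_m\to\infty$ is a multiplicity-one plane, for $m$ large the rescaled surface $\Sigma/R_m$ is, in an annulus $B_2\setminus B_{1}$, a $C^{1,\alpha}$ graph over $H$ with small gradient; by a standard connectedness/continuation argument (the "no-hole" theorem, cf.\ the discussion in Remark after Corollary \ref{corollary:tangent cones at infinity} and the references \cite{AllAlm81,Sim83a}) this persists on all sufficiently large annuli, so there is $R_0>0$ such that $\Sigma\setminus C_{R_0}$ is the graph of a function $u$ over $\mathbb{R}^{n-1}\setminus B^{n-1}_{R_0}(\ovx)$ with $|\nabla u|\to 0$ at infinity. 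This is the step I expect to be the main obstacle: one must rule out that $\Sigma$ spirals, develops a second sheet, or fails to be a graph at some large radius, and the cleanest route is to cite the decay estimates for exterior minimal surfaces with planar blow-down already established in Maggi--N.\ \cite[Theorem 2.7 and the surrounding analysis]{MagNov22} (see Remark \ref{remark:residue connection}), which were designed precisely for this situation and yield the graphicality together with an initial $\bigo(|\ovx|^{2-n})$ (or $\bigo(\log|\ovx|/|\ovx|^{?})$) bound on $u$.

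The final step is to extract the precise expansion \eqref{eq:asymptotic exp}. Once $u$ is known to be a solution of the minimal surface equation on an exterior domain with $|\nabla u|\to0$ and $u=\littleo(|\ovx|)$, the equation linearizes: writing the minimal surface operator as $\Delta u + (\text{quadratically small corrections})$, $u$ is a solution of $\Delta u = f$ with $f=\bigo(|\ovx|^{-2n+2})$ type decay after the first iteration, and the classical theory of exterior harmonic functions (expansion in solid harmonics / the Kelvin transform, as in Schoen's and Simon's work on minimal surfaces with isolated singularities, cf.\ \cite{Sim85a}) gives, for $n\ge 3$,
\[
u(\ovx)=a+\frac{b}{|\ovx|^{n-3}}+\frac{\overline{c}\cdot\ovx}{|\ovx|^{n-1}}+\bigo(|\ovx|^{1-n}),
\]
with the constant $a$ being the limit of $u$ at infinity (which exists and is finite because the leading nonconstant harmonic on an exterior domain with $u=\littleo(|\ovx|)$ decays), $b$ the coefficient of the fundamental solution, and $\overline{c}$ the dipole term; for $n=2$ the only bounded harmonic functions on an exterior planar domain are constants once one excludes the $\log$ term (again forced by $u=\littleo(|\ovx|)$ and the minimality, which controls the "flux"), giving $u\equiv a$. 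I would remark that \eqref{eq:asymptotic exp} is exactly the expansion recorded in \cite[Theorem 2.7]{MagNov22} for exterior minimal surfaces, so the corollary follows by applying that result to $\Sigma\setminus C_{R_0}$.
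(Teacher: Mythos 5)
Your outline reproduces the paper's broad strategy --- pass to blow-downs, identify the limiting hyperplane, upgrade to graphicality, linearize --- but there are two genuine gaps where you invoke classical asymptotic theory in a way that does not actually close the argument, and both are exactly the places where the paper has to do real work.

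The first concerns the exclusion of the logarithmic term when $n=3$. Writing $\Sigma\setminus C_{R_0}$ as the graph of $u:\mathbb{R}^2\setminus B^{n-1}_{R_0}\to\mathbb{R}$, the linearization/Kelvin-transform arguments you cite (Schoen's expansion, harmonic analysis on exterior planar domains) do not rule out a term $b\ln|\ovx|$: this is an admissible solution of the linearized equation, it satisfies $|\nabla u|\to 0$, and it is $\littleo(|\ovx|)$, so neither of the hypotheses you list forbids it. You gesture at ``the minimality, which controls the flux,'' and that is indeed the right mechanism, but it is not spelled out and the phrase ``again forced by $u=\littleo(|\ovx|)$'' is wrong. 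The paper handles this by an explicit competitor construction: if $b\neq 0$ then the catenoidal growth makes $\pc(\X;C_r)\geq c_{(N+1)(N+2)}\pi r^2 + C\ln r + \mathrm{O}(1)$, whereas replacing $\Sigma\cap C_r$ with a tilted disk (matching the dipole term $\overline{c}\cdot\ovx/|\ovx|^2$ on $\partial C_r$) gives a competitor cluster with energy $c_{(N+1)(N+2)}\pi r^2 + \mathrm{O}(1)$, contradicting local minimality for $r$ large. Something of this form is unavoidable; stationarity alone does not kill the log.

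The second gap is the case $n=2$, where your argument is simply about the wrong object. Here $\mathbb{R}^{n-1}=\mathbb{R}$, so outside a compact interval $u$ is defined on two disjoint rays; the minimal surface equation says $u$ is affine on each, and the planar blow-down forces the slopes to vanish, but it does \emph{not} force the two constants $a_1$ (left ray) and $a_2$ (right ray) to coincide. Your appeal to ``bounded harmonic functions on an exterior planar domain'' is really a statement about $n=3$ (where $\mathbb{R}^{n-1}=\mathbb{R}^2$), not $n=2$, and it cannot detect the offset $a_1\neq a_2$. Again the paper closes this with a minimality competitor: one rotates the compact ``core'' of the cluster about the midpoint of the segment joining $(r_1,a_1)$ and $(r_2,a_2)$ so as to align the two rays, and the resulting energy gain is strictly positive (by the triangle inequality) unless $a_1=a_2$.

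A smaller point: the step where you pass from ``each blow-down at scale $R_m$ is a multiplicity-one plane'' to ``$\Sigma$ is a single global graph over a \emph{fixed} plane outside a cylinder'' is not automatic from Allard plus a continuation argument alone --- a priori the plane could depend on the subsequence. The paper invokes the uniqueness-of-tangent-plane result \cite[Theorem 2.1.(ii)]{MagNov22} for this (not their Theorem 2.7, which is the monotonicity formula), then applies Allard at large scales, and only afterwards uses \cite[Proposition 4.1]{MagNov22} (following Schoen) for the expansion. Your instinct to lean on Maggi--N.\ is correct, but the relevant inputs are split across those two statements rather than contained in Theorem 2.7.
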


\begin{remark}[Applications of the asymptotic expansion]
    In the equal weights case, the proper chambers are bounded \cite[Theorem 2.4]{NovPaoTor23}. Therefore, the asymptotic expansion \eqref{eq:asymptotic exp} holds for locally minimizing $(N,2)$-clusters, and could be used for example in studying the analogue of the $N$-bubble conjecture for $(N-1,2)$-clusters.
\end{remark}

\begin{proof}
Since $\X$ is a $\bf c$-locally minimizing $(N,2)$-cluster in $\rn$ with $\cup_h \E(h) \subset B_R(0)$ for some $R>0$, $\X(N+1)$ and $\X(N+2)$ are perimeter minimizers in $\rn \setminus \cl B_{R}(0)$. Since they both have infinite volume, we can choose $R_m\to \infty$ such that $\pa B_{R_m}(0) \cap \X(N+1,N+2) \neq \varnothing$ for all $m$. Then Corollary \ref{corollary:tangent cones at infinity} implies that there exists a blow-down $\X_\infty$ which is a conical $\bf c$-locally minimizing $(0,M')$-cluster in $\rn$, for $M' \in \{1,2\}$. However, since $\Theta_\infty(\X)>0$, \eqref{eq:density of tangent cone} implies that we cannot have $M'=1$. So we have a locally minimizing, conical $(0,2)$-cluster in all of space: in other words, a pair of complementary entire perimeter minimizing cones. In $\rn$ for $n< 8$, since all entire perimeter minimizers are halfspaces, $\X_\infty(N+1)$ and $\X_\infty(N+2)$ are complementary halfspaces. By Corollary \ref{corollary:tangent cones at infinity}, this gives $\Theta_\infty(\X)=c_{(N+1)(N+2)}\omega_{n-1}$ when $n< 8$. In $\rn$ for $n \geq 8$, the same conclusion is a consequence of our assumption that $\Theta_\infty (\X) = c_{(N+1)(N+2)}\omega_{n-1}$ and Allard's theorem. We finish the proof in cases.

\medskip

\noindent\textit{The case $n=2$}:  Since $\X(N+1)$ and $\X(N+2)$ are perimeter minimizers in $\mathbb{R}^2 \setminus \cup_h \cl \E(h)$, $\X(N+1,N+2) \setminus \cl B_R(0)$ is a union of rays. Since $\Theta_\infty(\X)=c_{(N+1)(N+2)}\omega_{n-1}$, there can be only two such rays, and so every blow-down of $\X$ is the same pair of complementary halfspaces. Since $\X(N+1)$ and $\X(N+2)$ blow down to complementary halfspaces, each of them has one unbounded connected component, which we call $C_1$ and $C_2$, respectively. Let $\{C_\ell\}_{\ell > 2}$ be all the other connected components of the other chambers in addition to any bounded components of $\X(N+1)$, $\X(N+1)$, $\pa \X(N+1)$ and $\pa \X(N+2)$. Then $C_1$ and $C_2$ are perimeter minimizers in the open set $\mathbb{R}^2 \setminus \cl \cup_{\ell>2}C_\ell$, and $\pa C_1 \cap \pa C_2$ blows down to a line, which we take to be $\{x_2=0\}$ by rotating.
Thus we can find $r_i$, $a_i\in \mathbb{R}$ for $i=1,2$ such that $\cup_{\ell>2} C_\ell \subset [r_1,r_2]\times I$ for an interval $I\in \mathbb{R}$ and $\X(N+1,N+2)\setminus ([r_1,r_2] \times I)$ is the graph of a function $u:\mathbb{R}\setminus [r_1,r_2] \to \{a_1,a_2\}$ with
\begin{align}\notag
    u(\overline{x}) =
\begin{cases}
a_1 & \overline{x} < r_1 \\ 
a_2 & \overline{x} > r_2
\end{cases}
\end{align}
and each $(r_i,a_i) \in  \partial (\cup_{\ell > 2} C_\ell)$. We must show that $a_1=a_2$, as in \cite[Proof of Theorem 1.1, step five]{MagNov22} or \cite[Figure 3]{NovPaoTor23}.

\medskip

Let $s$ be the segment connecting $(r_1,a_1)$ and $(r_2,a_2)$. Let $r_1'=r_1-1$ and $r_2'=r_2+1$, and $s'$ be the segment joining $(r_1',a_1)$ and $(r_2',a_2)$. We create a new cluster $\X'$ in the following fashion.
\begin{itemize}
    \item Rotate $\cup_{\ell > 2} C_\ell$ around the midpoint of $s$ via a rotation map $R$ so that $R(r_i,a_i) \in s'$ for $i=1,2$.
    \item Let  $C' = R(\cl \cup_{\ell>2}C_\ell) \cup s'$, so that $C'\cup \{(t,a_1):t \leq r_1'\} \cup \{(t,a_2):t\geq r_2'\}$ disconnects $\mathbb{R}^2$ into two unbounded open sets. Call these $C_1'$ and $C_2'$ and add them to $\X'(N+1)$ and $\X'(N+2)$, respectively.
\end{itemize}
Direct computation shows that on any rectangle $\mathcal{R}=[r_1',r_2']\times I'$ for large interval $I'$,
\begin{align}\notag
    \pc(\X;&\mathcal{R}) -\pc(\X';\mathcal{R})\\ \notag
    &= c_{(N+1)(N+2)} \big[ 2 - \sqrt{(r_2-r_1+2)^2 + (a_1-a_2)^2} + \sqrt{(r_2-r_1)^2 + (a_1-a_2)^2} \big]\geq 0\,,
\end{align}
with equality if and only if $a_1=a_2$ by the triangle inequality. By the $\bf c$-local minimality of $\X$, we deduce that $a_1=a_2$.

\begin{figure}
\begin{overpic}[scale=0.65,unit=1mm]{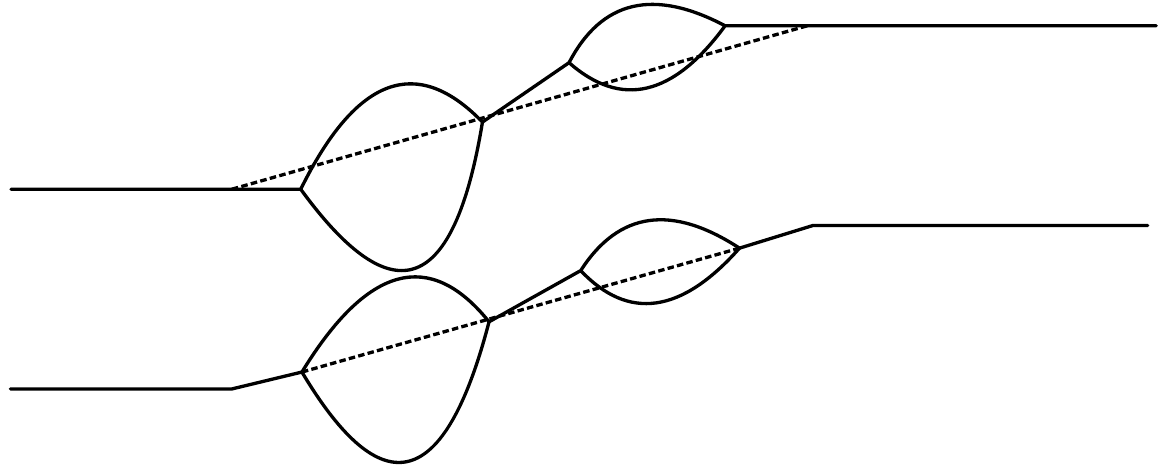}
\put(60.5,40){\small{$(r_2,a_2)$}}
\put(18,21.5){\small{$(r_1,a_1)$}}
\put(30,22){\small{$C_3$}}
\put(42,34){\small{$C_4$}}
\put(53,36){\small{$C_5$}}
\put(8,25.5){\small{$C_1$}}
\put(85,39.5){\small{$C_2$}}
\put(49,30){\small{$s'$}}
\end{overpic}
\caption{On top is the original configuration, and below is the rotated configuration with less energy. The dotted segment is $s'$, and $C_3$, $C_4$, and $C_5$ are the two components of $\X(1)$ and the segment connecting them. They are all rotated around the midpoint of $s'$ so that the rotated $(r_1,a_1)$ and $(r_2,a_2)$ lie on $s'$.}\label{fig:collinear}
\end{figure}

\medskip

\noindent\textit{The case $n\geq 4$}:
In this case, the rectifiable varifold $V=\var(\X(N+1,N+2),1)$ is stationary in $\rn \setminus B_R(0)$ for some $R>0$, has planar growth at infinity, and, for some Radon measure $\s$ on $\partial B_R(0)$ and unit-valued $\nu_V^{\rm co}:\partial B_R(0) \to \mathbb{S}^{n-1}$,
\begin{align}\label{eq:div thm 2}
    \delta V(X) = \int X(x) \cdot \nu^{\rm co}_V(x)\,d\sigma(x)\qquad \forall X \in C_c^\infty(\rn;\rn)
\end{align}
by \eqref{eq:div thm}. These are precisely the assumptions necessary to apply \cite[Theorem 2.1.$(ii)$]{MagNov22} and obtain uniqueness of the tangent plane at infinity to the varifold $V$. Up to a rotation, we can assume this tangent plane is $\{x_n=0\}$. Due to Allard's theorem and the convergence of $V/R$ to $\var(\{x_n=0\},1)$, $V$ can be parametrized as a minimal graph over $\{x_n=0\}$ outside some compact set.  By \cite[Proposition 4.1]{MagNov22}, which derives the asymptotic expansion for exterior solutions to the minimal surface equation following \cite[Propositions 1 and 3]{Sch83}, there exists $R_0>0$ such that $\X(N+1,N+2)\setminus C_{R_0}$ is the graph of $u:\mathbb{R}^{n-1}\setminus B_{R_0}^{n-1}(\overline{0})\to \mathbb{R}$ satisfying the minimal surface equation and
\begin{align}\label{eq:ngeq3 exp}
    u(\overline{x}) = a + \frac{b}{|\overline{x}|^{n-3}}+ \frac{\overline{c}\cdot \overline{x}}{|\overline{x}|^{n-1}}+ \mathrm{O}(|\overline{x}|^{1-n})\qquad |\overline{x}| \geq R_0\,.
\end{align}

\medskip

\noindent\textit{The case $n=3$}: The exact same argument as for $n\geq 4$ can be repeated verbatim, except that the expansion for the function $u$ has a logarithmic term which we must eliminate (cf. \cite[Proposition 4.1]{MagNov22}). So
\begin{align}\label{eq:n=3 exp}
    u(\overline{x}) = a + b \ln |\overline{x}|+ \frac{\overline{c}\cdot \overline{x}}{|\overline{x}|^{2}}+ \mathrm{O}(|\overline{x}|^{-2})\qquad |\overline{x}| \geq R_0
\end{align}
and we would like to show that $b=0$ using the minimality of $\X$. Assume for contradiction that $b\neq 0$. Then by the catenoidal growth in the expansion 
\eqref{eq:n=3 exp}, the energy of $\X$ inside large cylinders can be estimated from below by
\begin{align}\label{eq:area growth of catenoid}
\pc(\X; C_r) &\geq  c_{(N+1)(N+2)}\pi r^2 + C\ln r + \mathrm{O}(1)
\end{align}
for some $C>0$. To make our comparison cluster, for each $r$, let $\Pi_r$ be the plane $\{x_3 = a + b \ln|\bar x| \}$. Up to relabelling, we may assume that inside $C_r$, $\X(N+1)$ is ``below" $\X(N+2)$, in that $\X(N+1)$ is unbounded in the $-e_3$ direction and $\X(N+2)$ in the $e_3$ direction. We define a new cluster $\X_r$ by setting
\begin{align}\notag
    \X_r(j) &= \X(j) \quad \textup{if $1\leq j \leq N$}\\   \notag
     \X_r(N+1)&= (\{x:x_3<a+b \ln |\bar x|\}\cap C_r \setminus \cup_{j=1}^N \X_r(j)) \cup \X(N+1) \setminus C_r \\ \notag
      \X_r(N+2)&= (\{x:x_3>a+b \ln |\bar x|\}\cap C_r \setminus \cup_{j=1}^N \X_r(j)) \cup \X(N+2) \setminus C_r\,.
\end{align}
Then for large $r$, $\X(j) \Delta \X_r(j) \cc B_{r+1}^{2}(\overline{0})\times (-r,r)$. Also, by \eqref{eq:cut and paste} and \eqref{eq:n=3 exp} (which allow us to estimate the error coming from ``gluing" along $\partial C_r$), we can compute
\begin{align}\notag
    \pc(\X_r ; B_{r+1}^{2}(\overline{0}) \times (-r,r)) &\leq c_{(N+1)(N+2)} \pi r^2 + 2\pi r \sup_{|\bar x|=r}|\bar c \cdot \bar x|/r^2  \\ \notag
    &\qquad + \sup_{j<k} c_{jk} \sum_{j=1}^N P(\X(j))  \\ \label{eq:disk plus O1}
    &\leq c_{(N+1)(N+2)}\pi r^2 + \mathrm{O}(1)\,.
\end{align}
For large $r$, the combination of \eqref{eq:disk plus O1} and \eqref{eq:area growth of catenoid} contradicts the $\bf c$-local minimality of $\X$.
\end{proof}

\begin{lemma}[Equivalent notions of minimality]\label{lemma:equiv min notions}
    If $\bf c$ satisfies \eqref{eq:pos}-\eqref{eq:triangle} and an $(N,2)$-cluster $\X$ satisfies, for every $r>0$,
\begin{align}\label{eq:alt min def in lemma}
    \pc(\X;B_r(0)) \leq \pc(\X';B_r(0))
\end{align}
for any $\X'$ such that $\X(j) \Delta \X'(j) \cc B_r(0)$ and $|\X(j) \cap B_r(0)| = |\X'(j) \cap B_r(0)|$ for each $1\leq j \leq N+2$, then $\X$ is a $\bf c$-locally minimizing $(N,2)$-cluster in the sense of Definition \ref{def:loc mins}.
\end{lemma}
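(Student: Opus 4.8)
\emph{Proof plan.} The plan is to show that every competitor $\X'$ admissible in \eqref{eq:min condition} on a fixed ball $B_r(0)$ can be repaired, far from $B_r(0)$ and at an energy cost tending to zero, into a competitor admissible in \eqref{eq:alt min def in lemma} on a much larger ball; letting that ball grow to infinity then forces \eqref{eq:min condition}.

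First I would record the structural consequences of \eqref{eq:alt min def in lemma}. Since its competitor class contains all competitors that fix every chamber volume, Almgren's volume-fixing variations (as in \cite[VI.10--12]{Alm76} and \cite[Theorem 5.1, Lemma 5.2, Remark 5.3]{NovPaoTor23}) upgrade \eqref{eq:alt min def in lemma} to the $\Lambda$-minimality inequality \eqref{eq:lambda min ineq} on every ball; this yields lower volume density bounds, hence each $\X(j)^\one$ is open with $\cl\pa^*\X(j)=\pa\X(j)^\one$, and choosing these representatives secures the first requirement $\cl\pa^*\X(j)=\pa\X(j)$ of Definition \ref{def:loc mins}. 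Applying the elimination argument to the proper indices (Remark \ref{remark:bounded proper chambers}) gives $R_1$ with $\cup_{h=1}^N\X(h)\cc B_{R_1}(0)$, so $\rn\setminus\cl B_{R_1}(0)$ is partitioned, mod $\hn$, by the open sets $\X(N+1)^\one$ and $\X(N+2)^\one$; as both chambers have infinite volume and $\rn\setminus\cl B_{R_1}(0)$ is connected, the interface $\Gamma:=\X(N+1,N+2)$ is unbounded. Moreover, \eqref{eq:alt min def in lemma} makes $\X(N+1)$ and $\X(N+2)$ perimeter minimizers in $\rn\setminus\cl B_{R_1}(0)$ subject to preserving their volumes in every $B_\rho(0)$, so $\Gamma\setminus\cl B_{R_1}(0)$ has constant mean curvature $\lambda$. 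One then upgrades this to $\lambda=0$ using that both improper chambers are infinite --- e.g.\ via the dilation invariance behind Corollary \ref{corollary:tangent cones at infinity}: the blow-downs of $\Gamma$ are area-stationary cones, while a nonzero mean curvature would blow up under dilation; cf.\ \cite{MagNov22,NovPaoTor23}. Consequently $\Gamma$ is area-stationary outside $\cl B_{R_1}(0)$, and the monotonicity formula gives $\hn(\Gamma\cap B_s(y))\ge c_0\,s^{n-1}$ whenever $y\in\Gamma$, $|y|\ge 2R_1$ and $s\le|y|/2$.

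Now fix $\X'$ admissible in \eqref{eq:min condition} on $B_r(0)$. Since $\X=\X'$ outside $B_r(0)$, the proper chambers have equal volume in $B_r(0)$ for $\X$ and $\X'$, and preservation of the total volume forces
\[
|\X'(N+1)\cap B_r(0)|-|\X(N+1)\cap B_r(0)|=-\big(|\X'(N+2)\cap B_r(0)|-|\X(N+2)\cap B_r(0)|\big)=:v ,
\]
with $|v|\le\omega_n r^n$; the same identity holds with $B_r(0)$ replaced by any $B_\rho(0)$, $\rho\ge r$, and
\[
\pc(\X;B_\rho(0))-\pc(\X';B_\rho(0))=\pc(\X;B_r(0))-\pc(\X';B_r(0))\qquad\forall\,\rho\ge r .
\]
For arbitrarily large $\rho$ I would build $\X''_\rho$ from $\X'$ by modifying only the two improper chambers inside $B_{\rho-1}(0)\setminus\cl B_{R_1}(0)$ (a region containing no proper chamber, on which $\X'=\X$): pick $y\in\Gamma$ with $|y|\asymp\rho$ and shift $\Gamma$ by a small normal displacement over a patch of $\Gamma\cap B_{|y|/2}(y)$, whose $\hn$-measure is of order $\rho^{n-1}$ by the lower density bound established above, so as to transfer exactly the volume $v$ between the two chambers and thereby restore every chamber volume inside $B_\rho(0)$ --- an intermediate-value argument in the size of the shift makes the transfer exact. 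Because $\Gamma$ is area-stationary there, the first-order area change of the shift vanishes, the displacement is of order $|v|/\rho^{n-1}$, and the extra ${\bf c}$-perimeter incurred (computed via \eqref{eq:cut and paste}, with the gluing confined to the boundary of the shifted patch since the modification avoids $\pa B_\rho(0)$) is of order $|v|/\rho=:\eta(\rho)\to 0$. Then $\X''_\rho$ is admissible in \eqref{eq:alt min def in lemma} on $B_\rho(0)$, so
\[
\pc(\X;B_\rho(0))\le\pc(\X''_\rho;B_\rho(0))\le\pc(\X';B_\rho(0))+\eta(\rho),
\]
and combining with the identity above, $\pc(\X;B_r(0))-\pc(\X';B_r(0))\le\eta(\rho)\to 0$, i.e.\ \eqref{eq:min condition}.

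The main obstacle is the construction of the repair $\X''_\rho$ at vanishing cost: this is precisely where one needs $\Gamma$ to be minimal, rather than merely of constant mean curvature, outside a compact set --- otherwise a spurious first-order term of size $|\lambda|\,|v|$ survives the shift --- and hence to carry hyperplane-like area $\gtrsim\rho^{n-1}$ at all large scales, which is what permits an arbitrarily thin, arbitrarily cheap, volume-transferring perturbation. Establishing this flatness from the a~priori weaker hypothesis \eqref{eq:alt min def in lemma}, rather than from the full minimality in Definition \ref{def:loc mins}, is the delicate step; the remaining estimates are routine applications of \eqref{eq:cut and paste}.
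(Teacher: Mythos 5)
You correctly identify the strategic skeleton of the proof --- enlarge the ball, repair the competitor far from the original variation at vanishing energy cost, and let the enlargement radius go to infinity --- and this is indeed the same skeleton the paper uses. However, the specific repair you propose, a normal displacement of the interface $\Gamma:=\X(N+1,N+2)$ over a far-away patch, relies on two properties of $\Gamma$ that the hypothesis \eqref{eq:alt min def in lemma} does not supply: that $\Gamma$ is area-minimizing (rather than merely constant mean curvature) outside a compact set, and that it is regular enough there to carry a controlled graphical perturbation without crossing itself or the other interfaces. You flag the first of these as the ``delicate step'' but do not close it, and the appeal to blow-downs and dilation invariance is essentially circular: blow-down compactness requires an a priori upper bound $\pc(\X;B_R)\lesssim R^{n-1}$, while the monotonicity formula you invoke yields only lower density bounds, which is also what your patch-area estimate rests on.

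The paper's proof is designed precisely to avoid these issues. Its step one proves the upper energy bound $\pc(\X;B_r(x))\leq Cr^{n-1}$ from scratch by testing \eqref{eq:alt min def in lemma} against explicit annular competitors built inside $B_{\rho_i}(x)$ and estimating with the isoperimetric inequality, with no regularity of $\Gamma$ assumed. Step two then uses that bound together with Almgren's nucleation lemma to produce balls $B_{R_k}(x_k)$, $R_k\to\infty$, on which the volume fraction of $\X(N+1)$ is pinched away from $0$ and $1$; step three corrects the fixed volume defect at scale $R_k$ by standard volume-fixing variations, at cost $\lesssim R_k^{n-1}\cdot|v|R_k^{-n}\to 0$. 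This repair is purely measure-theoretic and never requires $\Gamma$ to be minimal, smooth, or non-self-intersecting. In short, your proposal has a genuine gap at exactly the point where the paper's construction deliberately takes a different route.
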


\begin{proof} The proof is divided into three main steps. After collecting a preliminary fact in step zero, in step one we prove an energy bound for $\X$ on any $B_r(x)$. Next, we use that bound and the fact that $|\X(N+1)| = \infty$ to locate balls $B_{R_k}(x_k)$ with $R_k\to \infty$ such that the volume fraction of $\X(N+1)$ on $B_{R_k}(x_k)$ is bounded away from $0$ and $1$. Finally, we use the volume fixing variations construction on $B_{R_k}(x_k)$ to fix the volumes of the improper chambers and prove \eqref{eq:alt min def in lemma}.

\medskip

\noindent{\it Step zero}: First, the local minimality condition \eqref{eq:alt min def in lemma} (with the volume of all the chambers being preserved on $B_r(0)$) is the same as the notion used in \cite[Theorem 3.1]{Leo01} (see \cite[pg. 8]{Leo01}), and so we may apply that theorem to find positive $\eta$ and $\rho$ such that
\begin{align}\label{eq:elimination us 1}
    \mbox{``if $r<\rho$ and $|B_r(x) \cap (\cup_{j\leq N} \X(j))|\leq \eta r^n$, then $(\cup_{j\leq N} \X(j))\cap B_{r/2}(x)=\varnothing$"}\,.
\end{align}
As a consequence of \eqref{eq:elimination us 1} and the fact that $|\X(j)| < \infty$ for $j\leq N$, $\X(j)$ is bounded if $j\leq N$. 

\medskip

\noindent{\it Step one}: Here we prove that if $x\in \rn$, $r>0$, and $\rho_i\nearrow r$, 
then for any $1\leq J\leq N+2$ such that $|\X(J) \cap B_r(x)|>0$,
\begin{align}\label{eq:basic energy bound}
    \pc(\X;B_r(x)) \leq C(N,n,{\bf c})\big[|B_r(x) \setminus \X(J)|^{(n-1)/n}+ \limsup_{i\to \infty}\hn(\pa B_{\rho_i}(x) \setminus \X(J)^\one) \big]\,.
\end{align}
For later use, we note that \eqref{eq:basic energy bound} also implies
\begin{align}\label{eq:basic energy bound 2}
 \pc(\X;B_r(x)) \leq C r^{n-1} \quad \mbox{for all $x\in \rn$, $r>0$}\,.   
\end{align}
Since $|\X(J)\cap B_r(x)|>0$ and $\rho_i \nearrow r$, by restricting to a tail of $\{\rho_i\}_i$ we may assume that $|\X(J) \cap B_{\rho_i}(x)|>0$ for all $i$. For each $i$, define a new cluster $\X_i$ by setting
\begin{align}\notag
    \X_i(j) \cap B_{\rho_i}(x) = A_j^i\,,\qquad \X_i(j) \setminus B_{\rho_i}(x) = \X(j) \setminus B_{\rho_i}(x)
\end{align}
where $A_j^i$ are pairwise disjoint open annuli centered at $x$ satisfying two conditions: first, $|A_j^i \cap B_{\rho_i}(x) | = |\X(j) \cap B_{\rho_i}(x)|$, and second that the outermost annulus is $A_J^i$. With $\X_i$ defined as such, the largest individual contribution to $\pc(\X;B_{\rho_i}(x))$ comes from the inner boundary of the outermost annulus $A_J^i$, which in turn depends on $|B_{\rho_i}(x) \setminus \X(J)|$. More precisely, we may estimate 
\begin{align}\notag
    \pc(\X_i;B_{\rho_i}(x)) &\leq \sup_{j<k}c_{jk}\sum_{j} \hn(\pa A_j^i \cap B_{\rho_i}(x)) \\ \label{eq:rhok inside energy}
    &\leq (N+1) (\sup_{j<k} c_{jk} )n\,\omega_n^{1/n}|B_{\rho_i}(x) \setminus \X(J)|^{(n-1)/n}\,.
\end{align}
Now since $A_J^i\neq \varnothing$ and borders $\pa B_{\rho_i}(x)$, the Lebesgue density of $A_J^i$ and thus $\X_i(J)$ is at least $1/2$ for all $y\in \pa B_{\rho_i}(x)$. Therefore by \eqref{eq:hn partition} and $\pa^* \X_i(J) \ehn \X_i(J)^\half$, $\pa B_{\rho_i}(x) \overset{\mathcal{H}^{n-1}}{\subset} \X_i(J)^\one \cup \pa^* \X_i(J)$. Furthermore, the Lesbesgue density of $\X_i(J)$ at $y\in \pa B_{\rho_i}(x)$ is at least that of $\X(J)$ at $y$, since $\X_i(J) = \X(J) \cup A_J^i$ in a neighborhood of $\pa B_{\rho_i}(x)$. Thus $\X(J)^\one \cap \pa B_{\rho_i}(x) \subset \X_i(J)^\one \cap \pa B_{\rho_i}(x)$. Putting these observations together, we estimate
\begin{align}\label{eq:pa rhok calc}
    \pc(\X_i;\pa B_{\rho_i}(x)) \leq  \sup_{j<k}c_{jk}\hn(\pa B_{\rho_i}(x) \setminus \X_i(J)^\one)\leq \sup_{j<k}c_{jk}\hn(\pa B_{\rho_i}(x) \setminus \X(J)^\one)\,.
\end{align}
Then by the minimality \eqref{eq:alt min def in lemma} and \eqref{eq:rhok inside energy}-\eqref{eq:pa rhok calc},
\begin{align}\notag
\pc(\X;B_r(x)) \leq \pc(\X_i;B_r(x)) &= \pc(\X_i;B_{\rho_i}(x)) + \pc(\X_i;\pa B_{\rho_i}(x)) + \pc(\X;B_r(x) \setminus \cl B_{\rho_i}(x)) \\ \notag 
&\leq C(N,n,{\bf c})\big[|B_{\rho_i}(x) \setminus \X(J)|^{(n-1)/n}+ \hn(\pa B_{\rho_i}(x) \setminus \X(J)^\one) \big] \\ \notag
&\qquad + \pc(\X;B_r(x) \setminus \cl B_{\rho_i}(x))\,.
\end{align}
Since $\X(j)$ are sets of locally finite perimeter, $\pc(\X;B_r(x) \setminus \cl B_{\rho_i}(x))\to 0$ as $\rho_i \to r$. Therefore, sending $i\to \infty$ concludes the proof of \eqref{eq:basic energy bound}.

\medskip

\noindent{\it Step two}: In this step we show the following claim, which we use to finish the proof in step three. 

\medskip

\noindent{\it Claim}: There exists $\delta\in (0,1/2)$ and sequences $R_k\to \infty$ and $\{x_k\}_k \subset \rn$ such that
\begin{align}\label{eq:big ball density bounds}
   \delta \omega_n R_k^n \leq |\X(N+1) \cap B_{R_k}(x_k)| \leq (1-\delta) \omega_n R_k^n\quad \forall k\,.
\end{align}

\medskip

\noindent{\it To prove \eqref{eq:big ball density bounds}}: If \eqref{eq:big ball density bounds} holds with $x=0$ and any sequence $R_k$ diverging to infinity, we are done. So it suffices to consider the case that
\begin{align}\label{eq:volume ratio dies}
   \min\big\{|\X(N+1) \cap B_{R}(0)|/(\omega_nR^n)\,\,,\,\,| B_R(0) \setminus \X(N+1)|/(\omega_nR^n) \big\} \to 0\quad \mbox{ as }R\to \infty\,. 
\end{align}
Since $R \mapsto |\X(N+1) \cap B_R(0)|/(\omega_nR^n)$ is continuous in $R$, if \eqref{eq:volume ratio dies} holds, then actually only one of $|\X(N+1) \cap B_{R}(0)|/(\omega_nR^n)\to 0$ or $| B_R(0) \setminus \X(N+1)|/(\omega_nR^n)\to 0$ holds along the whole sequence. Furthermore, since $|\X(j)|<\infty$ for $j\notin \{N+1,N+2\}$, the former case entails $| B_R(0) \setminus \X(N+2)|/(\omega_nR^n)\to 0$, while the latter entails $|\X(N+2) \cap B_{R}(0)|/(\omega_nR^n)\to 0$. So by interchanging the labels on $\X(N+1)$ and $\X(N+2)$ if necessary, we may as well assume that
\begin{align}\label{eq:n+1 volume ratio dies}
    |\X(N+1) \cap B_R(0)|/ (\omega_n R^n) \to 0\quad \mbox{as }R\to \infty
\end{align}
and prove \eqref{eq:big ball density bounds}. The idea is to use the energy bound \eqref{eq:basic energy bound} to find $R_k\to \infty$ such that
\begin{align}\label{eq:scale invariant perimeter bound}
P(\X(N+1) \cap B_{R_k}(0)) \lesssim |\X(N+1) \cap B_{R_k}(0)|^{(n-1)/n}\,.
\end{align}
\eqref{eq:scale invariant perimeter bound} implies that the rescaled unit volume sets $E_k:=(\X(N+1) \cap B_{R_k}(0))/|\X(N+1) \cap B_{R_k}(0)|^{1/n}$ have uniformly bounded perimeters. Such a uniform perimeter bound is only possible if these rescalings are not too ``sparse"; that is, $\max_{x\in \rn} |E_k \cap B_1(x)|$ is bounded away from $0$ uniformly in $k$. These ``good" balls on which the volume fraction of $E_k$ is non-negligible will rescale to be our $B_{R_k}(x_k)$, with $R_k\to \infty$ since $|\X(N+1)|=\infty$.

\medskip

First, we obtain a rewritten version of \eqref{eq:basic energy bound}. Let $m(r) = |B_r(0) \cap \X(N+1)|$. By the co-area formula, $m$ is differentiable almost everywhere, with derivative belonging to $L^1_\loc(\mathbb{R})$ and given by $m'(r) = \hn(\X(N+1)^\one \cap \pa B_r(0))$ for almost every $r$. Then almost every $r$ is a Lebesgue point of $m'$. For such a Lebesgue point $r$, we can find a sequence $\rho_i\nearrow r$ as $i\to \infty$ such that $m'(\rho_i) \to m'(r)$ as $i\to \infty$. Inserting these $\rho_i$ into the estimate \eqref{eq:basic energy bound} with $J=N+2$, we obtain for almost every $r>0$
\begin{align}\label{eq:rk bound}
   \pc(\X;B_{r}(0)) \leq C(N,n,{\bf c})\big[|B_{r}(0) \setminus \X(N+2)|^{(n-1)/n}+ \hn(\pa B_{r}(0) \setminus \X(N+2)^\one) \big]\,. 
\end{align}
Now since $|\X(N+1)|=\infty$ and $|\X(j)|<\infty$ for $1\leq j \leq N$, we can choose $R_0>0$ such that
\begin{align}\label{eq:comparable N+1 and everybody else}
    |B_r(0) \setminus \X(N+2)|^{(n-1)/n} \leq 2|B_r(0) \cap \X(N+1)|^{(n-1)/n}\quad \forall r>R_0\,.
\end{align}
Also, by the boundedness of the proper chambers from step zero and \eqref{eq:hn partition}, there exists $R_1>R_0$ such that
\begin{align}\label{eq:these slices dont see propers}
    \pa B_r(0) \shn \X(N+1)^\one \cup \X(N+2)^\one \cup \X(N+1,N+2)\quad \forall r>R_1\,.
\end{align}
Since $\hn \mres \X(N+1,N+2)$ is locally finite, $\hn(\X(N+1,N+2) \cap \pa B_r(0)) = 0$ for all but countably many $r>0$. So
\begin{align}\label{eq:in the ones zero}
    \pa B_r(0) \shn \X(N+1)^\one \cup \X(N+2)^\one\quad \mbox{for a.e.~ $r>R_1$}\,,
\end{align}
in which case
\begin{align}\label{eq:in the ones}
    \hn(\pa B_r(0) \setminus \X(N+2)^\one ) = \hn (\pa B_r(0) \cap \X(N+1)^\one)\quad \mbox{for a.e.~ $r>R_1$}\,.
\end{align}
By plugging \eqref{eq:comparable N+1 and everybody else} and \eqref{eq:in the ones} into \eqref{eq:rk bound}, for almost every $r>R_1$ we have
\begin{align}\label{eq:rk bound v2}
    \pc(\X;B_{r}(0)) \leq C(N,n,{\bf c}) \big[2|B_{r}(0) \cap \X(N+1)|^{(n-1)/n} + \hn(\pa B_{r}(0) \cap \X(N+1)^\one)\big]
\end{align}
as well as \eqref{eq:in the ones zero}. As a last modification of this energy bound, by \eqref{eq:pos} and \eqref{eq:hn partition} we have 
$$
P(\X(N+1);B_{r}(0)) \leq \max c_{jk}^{-1} \pc(\X;B_{r}(0))\,.
$$
Thus up to changing the values of the multiplicative constants in \eqref{eq:rk bound v2}, for almost every $r>R_1$,
\begin{align}\label{eq:rk bound v3}
    P(\X(N+1);B_{r}(0)) &\leq C\big[|B_{r}(0) \cap \X(N+1)|^{(n-1)/n} + \hn(\pa B_{r}(0) \cap \X(N+1)^\one)\big]\mbox{ and } \\ \label{eq:rk bound v3a}
    \pa B_r(0) &\shn \X(N+1)^\one \cup \X(N+2)^\one\,.
\end{align}

\medskip

We claim now that we can choose $r_k\to \infty$ such that \eqref{eq:rk bound v3}-\eqref{eq:rk bound v3a} hold for all $k$ and
\begin{align}\label{eq:boundary dominates inside}
    (C+1/2) \hn(\pa B_{r_k}(0) \cap \X(N+1)^\one) \leq P(\X(N+1);B_{r_k}(0))/2\,.
\end{align}
Indeed, supposing momentarily for the sake of contradiction that this is not possible, then there would be $R_2>R_1$ such that for almost every $r>R_2$, $P(\X(N+1);B_r(0))/2< (C+1/2) \hn(\pa B_r(0) \cap \X(N+1)^\one) $. Adding $\hn(\pa B_r(0)\cap \X(N+1)^\one)/2$ to both sides of this inequality and using \eqref{eq:cut and past intersection} (which applies by \eqref{eq:rk bound v3a}) gives
\begin{align}\notag
    P(\X(N+1) \cap B_r(0))/2 &= P(\X(N+1);B_r(0))/2 + \hn(\X(N+1)^\one \cap \pa B_r(0))/2 \\ \label{eq:bad diff ineq}
    &\leq (C+1) \hn(\X(N+1)^\one \cap \pa B_r(0)) \quad \mbox{for a.e. $r>R_2$}\,.
\end{align}
By recalling that $m'(r) = \hn(\X(N+1)^\one \cap \pa B_r(0))$ for almost every $r$ and using the isoperimetric inequality on the left hand side of \eqref{eq:bad diff ineq}, this inequality rewrites as
\begin{align}\label{eq:bad diff ineq 2}
    n\omega_n^{1/n}m(r)^{(n-1)/n} /2\leq (C+1) m'(r) \quad \mbox{for a.e.~ $r>R_2$}\,.
\end{align}
Since $|\X(N+1)|=\infty$, then $m(r)>0$ on some interval $(R_3,\infty)\subset (R_2,\infty)$, and we may integrate \eqref{eq:bad diff ineq 2} on $(R_3,\rho)$ to find
\begin{align}\label{eq:bad diff ineq 3}
    C_1\rho + C_2 \leq m(\rho)^{1/n} \quad \mbox{for a.e.~ $\rho>R_3$}
\end{align}
and some constants $C_1$ and $C_2$ with $C_1>0$. But \eqref{eq:bad diff ineq 3} implies that $m(\rho)$ grows like $\rho^n$ as $\rho\to \infty$, which contradicts \eqref{eq:n+1 volume ratio dies}. Concluding our contradiction argument for \eqref{eq:rk bound v3}, \eqref{eq:rk bound v3a}, and \eqref{eq:boundary dominates inside}, it must be the case that we can in fact choose $r_k\to \infty$ such that \eqref{eq:rk bound v3}, \eqref{eq:rk bound v3a}, and \eqref{eq:boundary dominates inside} hold for all $r_k$. 

\medskip

Now for these $r_k\to \infty$ we have chosen, we add $\hn(\X(N+1)^\one \cap \pa B_{r_k}(0))/2$ to both sides of \eqref{eq:rk bound v3}, yielding
\begin{align}\notag
    P(\X(N+1);B_{r_k}(0)) &+ \hn(\X(N+1)^\one \cap \pa B_{r_k}(0))/2  \\ \notag
    &\leq C|B_{r_k}(0) \cap \X(N+1)|^{(n-1)/n} + (C+1/2)\hn(\pa B_{r_k}(0) \cap \X(N+1)^\one)\,.
\end{align}
Due to the inequality \eqref{eq:boundary dominates inside}, this simplifies to
\begin{align}\notag
    \big[ P(\X(N+1);B_{r_k}(0)) &+ \hn(\X(N+1)^\one \cap \pa B_{r_k}(0))\big]/2 \leq C|B_{r_k}(0) \cap \X(N+1)|^{(n-1)/n}\,.
\end{align}
By the containment in \eqref{eq:rk bound v3a} and \eqref{eq:cut and past intersection}, the left hand side is $P(\X(N+1) \cap B_{r_k}(0))/2$, so we have
\begin{align}\label{eq:rk bound v4}
    P(\X(N+1) \cap B_{r_k}(0))\leq 2C|B_{r_k}(0) \cap \X(N+1)|^{(n-1)/n}\,.
\end{align}
Now, setting $s_k = |\X(N+1) \cap B_{r_k}(0))|^{1/n}$ we define the rescaled sets of finite perimeter 
$$
E_k = (\X(N+1) \cap B_{r_k}(0))/s_k\,.
$$
By the definition of $s_k$, the isoperimetric inequality and \eqref{eq:rk bound v4}, and $|\X(N+1)| = \infty$ and \eqref{eq:n+1 volume ratio dies},
\begin{align}\label{eq:Ek unit volume}
    &|E_k| = 1\,,\quad E_k \subset B_{r_k/s_k}(0)\,, \\ \label{eq:Ek unit perimeter}
    &C(n) \leq P(E_k) = s_k^{1-n}P(\X(N+1) \cap B_{r_k}(0)) \leq 2C \quad \mbox{for some $C(n)$}\,,\\ \label{eq:Ek in big balls}
    &s_k  \to \infty\quad\mbox{and}\quad s_k/r_k\to 0\,.
\end{align}
By the statement of Almgren's nucleation lemma \cite[VI.13]{Alm76} from \cite[Lemma 29.10]{Mag12}, \eqref{eq:Ek unit volume}-\eqref{eq:Ek unit perimeter} yield the existence of a dimensional constant $c(n)$ such that, setting $\varepsilon = \min \{1 ,  C(n) / (2nc(n)) \}/2$, there exist $y_k \in \rn$ such that
\begin{align}\label{eq:vol dens from nucleation}
    &|E_k \cap B_1(y_k)|  \geq \bigg(c(n) \frac{\varepsilon}{P(E_k)} \bigg)^n \geq  \bigg(c(n) \frac{\varepsilon}{2C} \bigg)^n\,.
\end{align}
Set 
$$\delta = \min\{(c(n)\e/(2C))^n/\omega_n,1/4 \}\,.
$$
Rescaling back by $s_k$ and setting $x_k =s_k y_k$, \eqref{eq:vol dens from nucleation} says that in terms of $\X(N+1) \cap B_{r_k}$,
\begin{align}\label{eq:vol dens from nucleation 2}
    |\X(N+1) \cap B_{s_k}(x_k)| \geq |\X(N+1) \cap B_{r_k}(0) \cap B_{s_k}(x_k)| \geq \delta \omega_n s_k^{n}\,,
\end{align}
where $\delta \in (0,1/2)$. 

\medskip

We may now finish the proof of \eqref{eq:big ball density bounds}. If
\begin{align}\label{eq:good upper density}
|\X(N+1) \cap B_{s_k}(x_k)|\leq (1-\delta)\omega_n s_k^n\,,
\end{align}
then we set $s_k=R_k$, and \eqref{eq:big ball density bounds} follows from \eqref{eq:vol dens from nucleation 2}, \eqref{eq:good upper density}, and \eqref{eq:Ek in big balls}. Suppose instead that, up to a subsequence,
\begin{align}\label{eq:bad upper density}
|\X(N+1) \cap B_{s_k}(x_k)|> (1-\delta)\omega_n s_k^n\,.
\end{align}
We observe that since $\X(N+1) \cap B_{r_k}(0) \cap B_{s_k}(x_k)\neq \varnothing$ and $s_k/r_k\to 0$ (from \eqref{eq:Ek in big balls}), it must be the case that $x_k \in B_{2r_k}(0)$ for large $k$. By combining this containment with \eqref{eq:n+1 volume ratio dies}, we estimate
\begin{align}\label{eq:n+1 volume ratio dies 2}
    \frac{|\X(N+1) \cap B_{r_k}(x_k)|}{\omega_n r_k^n} \leq 3^n \frac{|\X(N+1) \cap B_{3r_k}(0)|}{\omega_n (3r_k)^n} \to 0\quad\mbox{as }k\to \infty\,.
\end{align}
By the continuity of $r\mapsto |\X(N+1) \cap B_r(x_k)|/(\omega_nr^n)$, \eqref{eq:bad upper density} and \eqref{eq:n+1 volume ratio dies 2} allow us to choose $R_k \in (s_k,r_k)$ for large $k$ such that
\begin{align}\label{eq:good density final}
    |\X(N+1) \cap B_{R_k}(x_k)| = \omega_n R_k^n/2\,,
\end{align}
which also implies \eqref{eq:big ball density bounds}.

\medskip

\noindent{\it Step three}: By rescaling the density bound \eqref{eq:big ball density bounds} and the energy bound \eqref{eq:basic energy bound 2}, the sets $F_k:=(\X(N+1)-x_k)/R_k$ satisfy
\begin{align}\label{eq:density and energy bound}
   \delta \omega_n \leq |F_k \cap B_1(0)| \leq (1-\delta)\omega_n \quad\mbox{and}\quad \sup_k P(F_k;B_r(0)) \leq Cr^{n-1}\quad \forall r>0\,.
\end{align}
After restricting to a further subsequence and using the facts that $\cup_{j\leq N}\X(j)\cc B_R(0)$ for some $R>0$ and $R_k\to \infty$, we may choose $z\in \cl B_1(0)$ and $t_k\to 0$ such that
\begin{align}\label{eq:propers near z}
    B_1(0) \cap  \big[(\cup_{j\leq N}\X(j) - x_k)/R_k\big]\subset B_1(0) \cap \big[(B_R(0) -x_k)/R_k\big] \subset  B_{t_k}(z)\quad \mbox{for large $k$}\,;
\end{align}
the left hand side is empty if $\cup_{j\leq N}\X(j) \cap B_{R_k}(x_k) = \varnothing$. From the compactness for sets of finite perimeter, there exists a limiting set of locally finite perimeter $F$ that also satisfies the density bound on $B_1(0)$ from \eqref{eq:density and energy bound}. In particular, $\pa^* F \cap B_1(0)\neq \varnothing$, which means that we can choose $x\in \pa^* F \cap B_1(0)$ and $t>0$ such that $B_{t_k}(z)\cap B_t(x)=\varnothing$ for large $k$. Thus for large $k$,
\begin{align}\notag
    \cup_{j\leq N}\X(j) \cc B_R(0) \subset B_{R_kt_k}(R_kz+x_k)  \subset B_{R_kt}(R_kx+x_k)^c\,.
\end{align}
By the volume fixing variations construction \cite[Theorem 29.14]{Mag12} applied to $\{F_k\}_k$, we may obtain $\sigma_0>0$, $C>0$, and $K_0\in \mathbb{N}$ such that for any $\sigma\in [-\sigma_0,\sigma_0]$ and $k\geq K_0$, there exists $F_k^\sigma$ such that $F_k^\sigma \Delta F_k \cc B_{t}(x)$, 
\begin{align}\notag
  &|F_k^\sigma \cap B_{t}(x)| = |F_k \cap B_{t}(x)| + \sigma \qquad \mbox{and}\\ \notag
  &P(F_k^\sigma;B_{t}(x))\leq P(F_k;B_{t}(x)) + C |\sigma|\,. 
\end{align}

Now let $\X'$ be an $(N,2)$-cluster such that $\X'(j)\Delta \X(j) \subset B_r(0)$ for some $r>0$ and $|\X(j)| = |\X'(j)|$ for each $j$. We will be done if we prove \eqref{eq:alt min def in lemma} on $B_{r}(0)$; note that we may as well assume that $r>R$ (recalling that $B_R(0)$ contains the proper chambers of $\mathcal{X}$), since  the energy inequality on a smaller ball can be deduced from the analogous one on a larger ball by cancelling like terms. Let $\sigma_k = (|\X(N+1)\cap B_r(0)| - |\X'(N+1) \cap B_r(0)| )/ R_k^n$, and define the $(N,2)$-clusters $\X_k$ for $k$ large enough such that $B_r(0) \cap B_{R_kt}(R_kx + x_k)=\varnothing$ (in other words, the rescaled and translated volume fixing variations $R_kF_k^\sigma + x_k$ do not disturb the proper chambers) and $|\sigma_k| \leq \sigma_0$ by
\begin{align}\notag
    \X_k(j) &= \X'(j)\,\, \mbox{ for }j\leq N\,,\\ \notag
    \X_k(N+1) &= \big[\X'(N+1) \cap B_{r}(0)\big] \cup \big[(R_kF_k^{\sigma_k} + x_k) \cap B_{R_kt}(R_kx+x_k)\big]\\ \notag
    &\qquad\cup \big[\X(N+1) \cap B_r(0)^c \cap B_{R_kt}(R_kx+x_k)^c\big] \\ \notag
    \X_k(N+2) &= \big[\X'(N+2) \cap B_{r}(0)\big] \cup \big[B_{R_kt}(R_kx+x_k) \setminus (R_kF_k^{\sigma_k} + x_k)\big]\\ \notag
    &\qquad \cup \big[\X(N+2) \cap B_r(0)^c \cap B_{R_kt}(R_kx+x_k)^c\big]\,.
\end{align}
Then $\X_k(j) \Delta \X(j) \cc B_{r}(0) \cup B_{R_kt}(x_k + R_kx)$ for each $j$. Let $B_{s_k}(0)$ be large balls containing $B_r(0) \cup B_{R_kt}(x_k + R_kx)$. Due to the facts that for $j\leq N$, $\X(j) \cc B_R(0) \subset B_r(0)$, $|\X(j)| = |\X'(j)|$, and $\X'(j)\Delta \X(j) \cc B_r(0)$, we can compute
\begin{align}\notag
    |\X_k(j) \cap B_{s_k}(0)|= |\X_k(j) \cap B_r(0)| = |\X(j) \cap B_r(0)| = |\X(j) \cap B_{s_k}(0)|\quad \forall j\leq N\,.
\end{align}
Furthermore, using in order $\X_k(N+1) \Delta \X(N+1) \cc B_{r}(0) \cup B_{R_kt}(x_k + R_kx)$ and the definition of $F_k^{\sigma_k}$ followed by the definition of $F_k$ and our choice of $\sigma_k$,
\begin{align}\notag
  |\X_k(N+&1) \cap B_{s_k}(0)|\\ \notag
  &= |\X'(N+1) \cap B_{r}(0)|+ |(R_k F_k^{\sigma_k}+x_k) \cap B_{R_kt}(x_k + R_kx) | \\ \notag 
  &\quad +|\X(N+1)\cap B_{s_k}(0) \cap B_r(0)^c \cap B_{R_kt}(x_k+R_kx)^c| \\ \notag
  &= |\X'(N+1) \cap B_{r}(0)| + R_k^n\big(| F_k\cap B_{t}(x)| + \sigma_k\big)\\ \notag 
  &\quad +|\X(N+1)\cap B_{s_k}(0) \cap B_r(0)^c \cap B_{R_kt}(x_k+R_kx)^c|  \\ \notag
  &= |\X'(N+1) \cap B_{r}(0)| + |\X(N+1) \cap B_{R_kt}(x_k+R_kx)| + |\X(N+1)\cap B_r(0)| \\ \notag
  &\quad - |\X'(N+1) \cap B_r(0)|+|\X(N+1)\cap B_{s_k}(0) \cap B_r(0)^c \cap B_{R_kt}(x_k+R_kx)^c|\\ \notag
  &=|\X(N+1) \cap B_{s_k}(0)|\,.
\end{align}
So $|\X_k(j) \cap B_{s_k}(0)| = |\X(j) \cap B_{s_k}(0)|$ for $j \leq N+1$, and thus for $j=N+2$ also since $\{\X(j) \cap B_{s_k}(0)\}_j$ partition $B_{s_k}(0)$. We may therefore test the volume-constrained minimality of $\X$ against $\X_k$ on $B_{s_k}(0)$ and cancel like terms, yielding
\begin{align}\notag
    \pc(\X;B_r(0)) &+ c_{(N+1)(N+2)}P(\X(N+1);B_{R_kt}(x_k+R_kx))  \\ \notag
    &\leq \pc(\X';B_r(0)) + c_{(N+1)(N+2)}P(R_kF_k^{\sigma_k}+x_k;B_{R_kt}(x_k+R_kx)) \\ \notag
    &\leq \pc(\X';B_r(0)) +c_{(N+1)(N+2)}P(\X(N+1);B_{R_kt}(x_k+R_kx)) \\ \notag
    &\qquad +  R_k^{n-1}C \big||\X(N+1)\cap B_r(0)| - |\X'(N+1) \cap B_r(0)|\big| R_k^{-n}\,.
\end{align}
By subtracting $c_{(N+1)(N+2)}P(\X(N+1);B_{R_kt}(x_k+R_kx))$ from both sides and sending $k\to \infty$, we conclude that $\pc(\X;B_r(0)) \leq \pc(\X';B_r(0))$. Thus $\X$ is $\bf c$-locally minimizing in the sense of Definition \ref{def:loc mins}.
\end{proof}

\noindent In \cite{NovPaoTor23}, the authors prove a closure theorem \cite[Theorem 2.17]{NovPaoTor23} for $\bf c$-locally minimizing clusters when $\bf c$ is a family of equal positive weights under the assumption that for every pair of indices $j\neq k$ corresponding to improper chambers of the limiting cluster $\X$, there exists an $(n-1)$-dimensional halfspace contained in $\X(j,k)$ \cite[Definition 2.15]{NovPaoTor23}. For the case where there are two improper chambers, the following corollary of Lemma \ref{lemma:equiv min notions} removes this assumption on the limiting cluster.

\begin{corollary}[Closure for locally minimizing clusters]\label{corollary:closure theorem}
    If ${\bf c}_{\rm Id}$ is the family of equal unit weights, $\{\X_m\}_m$ is a sequence of $(N',N+2-N')$-clusters for some $N'<N+2$ such that, for every open $A \cc \rn$, 
\begin{align}\label{eq:alt min corollary}
    P_{{\bf c}_{\rm Id}}(\X_m;A)\leq P_{{\bf c}_{\rm Id}}(\X';A) 
\end{align}
    whenever $\X'(j) \Delta \X_m(j) \cc A$ and $|\X'(j) \cap A| = |\X_m(j)\cap A|$ for $1\leq j \leq N+2$, and there exists an $(N,2)$-cluster $\X$ such that $\X_m(j) \overset{\loc}{\to}\X(j)$ for $1\leq j \leq N+2$, then $\X$ is a ${\bf c}_{\rm Id}$-locally minimizing cluster in the sense of Definition \ref{def:loc mins}.
\end{corollary}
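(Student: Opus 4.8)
\emph{Proposal.} Since the family ${\bf c}_{\rm Id}$ of equal unit weights clearly satisfies \eqref{eq:pos}--\eqref{eq:triangle}, Lemma \ref{lemma:equiv min notions} applies to the $(N,2)$-cluster $\X$, so it is enough to verify that $\X$ satisfies the ``volume-on-balls'' minimality: for every $r>0$ and every cluster $\X'$ with $\X(j)\Delta\X'(j)\cc B_r(0)$ and $|\X(j)\cap B_r(0)|=|\X'(j)\cap B_r(0)|$ for all $j$, one has $P_{{\bf c}_{\rm Id}}(\X;B_r(0))\le P_{{\bf c}_{\rm Id}}(\X';B_r(0))$. As a preliminary reduction, $\sum_j\hn(\pa^*\X(j)\cap\pa B_r(0))>0$ for at most countably many $r$, and any competitor on $B_r(0)$ restricts to an admissible competitor on a slightly smaller $B_{r'}(0)$ (it agrees with $\X$ on the intervening annulus, so both the containment and the volume constraint persist, and the inequality on $B_r(0)$ follows from the one on $B_{r'}(0)$ together with $\hn(\pa^*\X(j)\cap\pa B_{r'}(0))=0$). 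Hence we may assume $r$ lies outside this countable exceptional set.

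Fix such an $r$ and a competitor $\X'$, and fix $s>r$. Because $\int_r^s\hn\big(\pa B_\rho(0)\cap(\X(j)^\one\Delta\X_m(j)^\one)\big)\,d\rho\le\|\mathbf 1_{\X(j)}-\mathbf 1_{\X_m(j)}\|_{L^1(B_s(0))}\to0$, after passing to a subsequence we may pick $\rho\in(r,s)$ with $\hn\big(\pa B_\rho(0)\cap(\X(j)^\one\Delta\X_m(j)^\one)\big)\to0$ for each $j$. Define the cluster $\X_m'$ by $\X_m'(j)=\X'(j)$ on $B_\rho(0)$ and $\X_m'(j)=\X_m(j)$ on $\rn\setminus B_\rho(0)$ (recall $\X'(j)=\X(j)$ on $B_\rho(0)\setminus B_r(0)$, so this is well-defined and one checks it is a genuine $(N',N+2-N')$-cluster for large $m$). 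Then $\X_m'(j)\Delta\X_m(j)\subset\cl B_\rho(0)\cc B_s(0)$, and the cut-and-paste formula \eqref{eq:cut and paste} with $H=B_\rho(0)$ gives
\begin{align*}
P_{{\bf c}_{\rm Id}}(\X_m';\cl B_\rho(0))\le P_{{\bf c}_{\rm Id}}(\X';B_\rho(0))+o(1)\,,
\end{align*}
the error being the interface created along $\pa B_\rho(0)$. A volume computation yields $|\X_m'(j)\cap B_s(0)|-|\X_m(j)\cap B_s(0)|=|\X(j)\cap B_\rho(0)|-|\X_m(j)\cap B_\rho(0)|=:-\sigma_j^m$ with $\sigma_j^m\to0$ and $\sum_j\sigma_j^m=0$.

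It remains to correct these small volume discrepancies before invoking the minimality \eqref{eq:alt min corollary} of $\X_m$. Using a volume-fixing variations construction for clusters localized near a fixed finite set of interface points of $\X$ (points of $\bigcup_{i<k}\X(i,k)$, which for large $m$ are also interface points of $\X_m$), one modifies $\X_m'$ inside a region disjoint from $\cl B_s(0)$ so that $|\X_m'(j)\cap B_{s'}(0)|=|\X_m(j)\cap B_{s'}(0)|$ for all $j$ and all large $m$, where $s'>s$ is fixed so that $B_{s'}(0)$ contains this region, at an extra perimeter cost $O(\sum_j|\sigma_j^m|)=o(1)$; alternatively one invokes the Almgren-type almost-minimality inequality \eqref{eq:lambda min ineq} for the $\X_m$ with a constant that is uniform in $m$ because $\X_m\to\X$, as in Remark \ref{remark:bounded proper chambers}. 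Now $\X_m'$ is admissible for $\X_m$ on $A=B_{s'}(0)$ in the sense of \eqref{eq:alt min corollary}, so $P_{{\bf c}_{\rm Id}}(\X_m;B_{s'}(0))\le P_{{\bf c}_{\rm Id}}(\X_m';B_{s'}(0))$. Splitting both sides across $\pa B_\rho(0)$, using $\X_m'=\X_m$ on $B_{s'}(0)\setminus\cl B_\rho(0)$ up to the $o(1)$ volume-fixing cost, and cancelling the (finite) common term $P_{{\bf c}_{\rm Id}}(\X_m;B_{s'}(0)\setminus\cl B_\rho(0))$, we arrive at $P_{{\bf c}_{\rm Id}}(\X_m;B_\rho(0))\le P_{{\bf c}_{\rm Id}}(\X';B_\rho(0))+o(1)$. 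Letting $m\to\infty$ along the subsequence and applying the lower semicontinuity \eqref{eq:lsc} gives $P_{{\bf c}_{\rm Id}}(\X;B_\rho(0))\le P_{{\bf c}_{\rm Id}}(\X';B_\rho(0))$; since $\X'=\X$ on $B_\rho(0)\setminus\cl B_r(0)$ and $\hn(\pa^*\X(j)\cap\pa B_r(0))=0$, this descends to $P_{{\bf c}_{\rm Id}}(\X;B_r(0))\le P_{{\bf c}_{\rm Id}}(\X';B_r(0))$, completing the verification and hence, by Lemma \ref{lemma:equiv min notions}, the proof.

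\emph{Main obstacle.} The delicate point is the volume-fixing step: one must restore the volumes of \emph{all} $N+2$ chambers with a perimeter penalty that is $o(1)$ \emph{uniformly} in $m$. This requires locating interface points of the limit cluster $\X$ at which the relevant pairs of chambers can exchange volume, verifying that $\X_m$ inherits the same local structure for large $m$, and taking care of degenerate configurations (e.g.\ chambers whose boundaries could a priori concentrate near $\pa B_s(0)$, or the chamber of $\X_m$ that becomes improper in the limit). The remaining ingredients---the annulus slicing, the cut-and-paste estimate \eqref{eq:cut and paste}, and the final passage to the limit via \eqref{eq:lsc}---are routine.
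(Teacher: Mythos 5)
The paper's proof of this corollary is essentially two sentences: it observes that \eqref{eq:alt min corollary} is precisely the definition of a locally $J$-isoperimetric partition in Novaga--Paolini--Tortorelli \cite[Definition 2.12]{NovPaoTor23} with $J=\{1,\dots,N+2\}$, cites their closure theorem \cite[Theorem 2.13]{NovPaoTor23} to conclude the limit $\X$ satisfies the same volume-on-balls minimality \eqref{eq:alt min def in lemma}, and then applies Lemma~\ref{lemma:equiv min notions} to upgrade this to Definition~\ref{def:loc mins}. Your proof has the same global architecture (first establish \eqref{eq:alt min def in lemma} for $\X$, then invoke Lemma~\ref{lemma:equiv min notions}), but instead of citing the NovPaoTor closure theorem for the first step, you attempt to re-derive it from scratch via the standard cut-and-paste and lower-semicontinuity scheme.

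Your sketch of the re-derivation is structurally sound, but the step you correctly single out as the ``main obstacle''---restoring the volumes of all $N+2$ chambers of $\X_m'$ at an $o(1)$ perimeter cost \emph{uniformly in $m$}---is left genuinely unresolved, and it is exactly the technical content of the cited closure theorem. A couple of specific issues: (1) you appeal to \eqref{eq:lambda min ineq}, but that inequality is stated in Remark~\ref{remark:bounded proper chambers} for $\bf c$-locally minimizing clusters in the sense of Definition~\ref{def:loc mins}, whereas the $\X_m$ only satisfy the weaker condition \eqref{eq:alt min corollary}; an Almgren-type $\Lambda$-minimality for the $\X_m$ would need to be established directly and, more seriously, with a constant $\Lambda$ uniform in $m$. (2) Volume-fixing variations for a partition must exchange volume between \emph{pairs} of adjacent chambers at interface points; you need enough such points (one for each pair of indices that might carry a nonzero discrepancy $\sigma_j^m$), contained in a fixed compact set disjoint from $\cl B_s(0)$, at which the local structure of $\X_m$ converges to that of $\X$ strongly enough to make the diffeomorphism construction uniform. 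None of this is impossible---it is what NovPaoTor do---but it is not a ``routine'' gap, and your proposal leaves it open. For the record, once \eqref{eq:alt min def in lemma} is in hand, the final passage through Lemma~\ref{lemma:equiv min notions} is exactly as in the paper.
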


\begin{proof}
In the terminology of Novaga-Paolini-Tortorelli \cite[Definition 2.12]{NovPaoTor23}, the minimality condition \eqref{eq:alt min corollary} means that each $\X_m$ is a locally $J$-isoperimetric $(N+2)$-partition of $\rn$, where $J=\{1,\dots, N+2\}$ refers to the indices of the chambers subject to volume constraints. By the closure theorem for $J$-isoperimetric partitions \cite[Theorem 2.13]{NovPaoTor23}, $\X$ is a locally $J$-isoperimetric $(N+2)$-partition of $\rn$, or in the language of Lemma \ref{lemma:equiv min notions}, an $(N,2)$-cluster satisfying \eqref{eq:alt min def in lemma} for the energy $P_{{\bf c}_{\rm Id}}$. By the conclusion of Lemma \ref{lemma:equiv min notions}, $\X$ is a ${\bf c}_{\rm Id}$-locally minimizing $(N,2)$-cluster.
\end{proof}

\noindent The previous corollary should hold for any family of weights $\bf c$ satisfying \eqref{eq:pos}-\eqref{eq:triangle} as well, although we do not pursue this here.

\section{Proof of Theorems \ref{thm:local minimality of lens} and \ref{thm:double bubble}}\label{sec:main proof}

\begin{proof}[Proof of Theorem \ref{thm:local minimality of lens}]
    Let $\X$ be a $(1,2)$-cluster such that $\X(j) \Delta \xl(j) \cc B_r(0)$ for some $r>0$ and each $j$, and $|\X(1)|=1$. We must show that $\pc(\xl;B_r(0))\leq \pc(\X;B_r(0))$. Since $\xl(1)$ is bounded by its definition, we choose $R>r$ such that $\xl(1) \cc B_R(0)$ and claim it is enough to show that 
\begin{align}\label{eq:desired minimality inequality}
    \pc(\xl;B_R(0)) \leq \pc(\X;B_R(0))\,.
\end{align}
   Indeed, by our assumption on $\X$, $\pc(\xl ;B_R(0) \setminus B_r(0)) = \pc(\X;B_R(0) \setminus B_r(0))$, so \eqref{eq:desired minimality inequality} gives the desired minimality on $B_r(0)$. For each $m>0$, let $B_{r_m}(y_m)$ be the ball such that $\xbub(2,3)\subset \pa B_{r_m}(y_m)$, where $\xbub$ is the standard weighted double bubble from Definition \ref{def:standard weighted double bubble} with volume vector $(1,m,\infty)$. We set
\begin{align}\label{eq:vol change 0}
    \sigma_m = |\xbub(2) \cap B_R(0)| - |\X(2) \cap B_R(0)|\,.
\end{align}
By the volume fixing variations construction \cite[Lemma 17.21]{Mag12}, there exists $\Lambda>0$ and a family $A_m$ of diffeomorphic images of $B_1(0)$ with $A_m \Delta B_1(0) \cc B_{1/2}(e_n)$ for large $m$ satisfying
\begin{align}\label{eq:vol change}
    |A_m| &= |B_1(0)| + \sigma_m / r_m^n \quad\mbox{and} \\ \label{eq:MC}
    P(A_m;B_{1/2}(e_n)) &\leq P(B_1(0);B_{1/2}(e_n)) + \Lambda |\sigma_m| / r_m^{n} \,.
\end{align}
By the fact that $B_1(0) \setminus B_{1/2}(e_n) = A_m \setminus B_{1/2}(e_n)$, the sets $r_m A_m + y_m$ satisfy
\begin{align}\label{eq:agrees with xbub} 
    (r_m A_m + y_m) \setminus B_{r_m/2}(y_m + r_me_n) &= B_{r_m}(y_m) \setminus  B_{r_m/2}(y_m + r_me_n)\,.  
\end{align}
We also have, by the definition of $B_{r_m}(y_m)$,
\begin{align}\label{eq:agree outside}
    B_{r_m}(y_m) \setminus B_R(0) = \xbub(2) \setminus B_R(0)\,.
\end{align}
Next, we note that $y_m \cdot e_n \to \infty$ as $m\to \infty$ by the definition of $\xbub$, and so
\begin{align}\label{eq:no ball overlap}
\cl B_R(0) \cap B_{r_m/2}(y_m+r_me_n)= \varnothing \quad \mbox{for large $m$}\,.
\end{align}
As a consequence of \eqref{eq:agrees with xbub} and \eqref{eq:no ball overlap}, $B_{r_m}(y_m)\cap B_R(0)=(r_mA_m+y_m) \cap B_R(0)$ for large $m$. By this equivalence, \eqref{eq:vol change} rescaled by $r_m^n$, and \eqref{eq:agree outside}, we have
\begin{align}\notag
    |(r_m A_m+y_m) \setminus B_R(0)| &=|(r_mA_m+y_m)|- |B_{r_m}(y_m) \cap B_R(0)| \\ \label{eq:vol change 2} 
    &=|B_{r_m}(y_m)| + \sigma_m - |B_{r_m}(y_m) \cap B_R(0)|=|\xbub(2) \setminus B_R(0)| + \sigma_m\,.
\end{align}
Next, using in order \eqref{eq:agree outside}, \eqref{eq:no ball overlap} to split $\cl B_R(0)^c$, \eqref{eq:agrees with xbub} to cancel equal terms, and \eqref{eq:MC} rescaled by $r_m^{n-1}$, we estimate
\begin{align}\notag
    P(r_mA_m+y_m&; \cl B_R(0)^c) - P(\xbub(2); \cl B_R(0)^c) \\ \notag
    &=P(r_mA_m+y_m; \cl B_R(0)^c) - P(B_{r_m}(y_m); \cl B_R(0)^c) \\ \notag
    &= P(r_mA_m+y_m; B_{r_m/2}(y_m+r_me_n)) - P(B_{r_m}(y_m); B_{r_m/2}(y_m+r_me_n)) \\ \notag
    &\qquad + P\big(r_mA_m+y_m;B_{r_m/2}(y_m+r_me_n)^c\cap \cl B_R(0)^c \big) \\ \notag
    &\qquad -P\big(B_{r_m}(y_m);B_{r_m/2}(y_m+r_me_n)^c\cap \cl B_R(0)^c\big)\\ \notag
    &= P(r_mA_m+y_m; B_{r_m/2}(y_m+r_me_n)) - P(B_{r_m}(y_m); B_{r_m/2}(y_m+r_me_n))\\ \label{eq:MC 2}
    &=r_m^{n-1}\big[ P(A_m;B_{1/2}(e_n)) - P(B_1(0);B_{1/2}(e_n))\big] \leq \Lambda |\sigma_m| / r_m \overset{m\to \infty}{\to} 0 \,.
\end{align}
Let us now define the $(2,1)$-clusters $\X_m$ via 
\begin{align}\notag
 &\X_m(1) = \X(1)\,, \quad \X_m(2) = \big(\X(2) \cap B_R(0)\big) \cup \big((r_mA_m+y_m) \setminus B_R(0)\big)\,, \\ \notag
 &\X_m(3) = \big(\X(3) \cap B_R(0)\big) \cup \big(B_R(0)^c \setminus (r_mA_m+y_m) \big)\,.
\end{align}
Due to the definition of $\X_m$, \eqref{eq:vol change 2}, and \eqref{eq:vol change 0}, we may compute
\begin{align}\notag
    |\X_m(2) | &= |\X(2) \cap B_R(0)| + |(r_m A_m + y_m) \setminus B_R(0)| \\ \notag
    &=|\X(2) \cap B_R(0)|+ |\xbub(2)\setminus B_R(0)|+\sigma_m \\ \notag
    &=|\X(2) \cap B_R(0)|+ |\xbub(2)\setminus B_R(0)| + |\xbub(2) \cap B_R(0)| - |\X(2) \cap B_R(0)| \\ \notag
    &=|\xbub(2)| \,.  
\end{align}
Therefore, ${\bf m}(\X_m) = (1,m,\infty)$. By $\X(j) \Delta \xl(j) \cc B_R(0)$ for each $j$ and \eqref{eq:agrees with xbub}, 
\begin{align}\notag
    (\X(2)^\one \Delta (r_mA_m+y_m) ) \cap \pa B_R(0) = (\xl(2)^\one \Delta \xbub(2)^\one) \cap \pa B_R(0)\,,
\end{align}
so that by the local Hausdorff convergence of $\xbub(2)$ to $\xl(2)$ from Lemma \ref{lemma:convergence of weighted double bubbles},
\begin{align}\label{eq:bdry term dies}
    \hn \big( (\X(2)^\one \Delta (r_mA_m+y_m)) \cap \pa B_R(0)\big) \overset{m\to \infty}{\to} 0\,.
\end{align}
Now, by $\pc(\xbub ; B_R(0))\to \pc(\xl;B_R(0))$ from Lemma \ref{lemma:convergence of weighted double bubbles} and the minimality of $\xbub$ tested against $\X_m$ (which is permissible since ${\bf m}(\X_m) = (1,m,\infty)$), we may write
\begin{align}\notag
    \pc (\xl ; B_R(0)) &= \lim_{m\to \infty}\pc(\xbub ; B_R(0)) = \lim_{m\to \infty}\pc(\xbub) - \pc (\xbub ; \rn \setminus  B_R(0)) \\ \label{eq:first xl min comp}
    &\leq \liminf_{m\to \infty} \pc(\X_m ) -  \pc (\xbub ; \rn \setminus B_R(0))\,.
\end{align}
We compute $\pc(\X_m)$ using \eqref{eq:cut and paste} and estimate $ -  \pc (\xbub ; \rn \setminus \cl B_R(0))$ using \eqref{eq:MC 2}, yielding
\begin{align}\notag
    \pc(\X_m) - \pc (\xbub ; \rn \setminus B_R(0)) &\leq \pc(\X;B_R(0)) + c_{23}\hn \big( (\X(2)^\one \Delta (r_mA_m+y_m)) \cap \pa B_R(0)\big) \\ \notag
    &\qquad + c_{23}P(r_mA_m + y_m ; \rn \setminus \cl B_R(0)) \\ \notag
    &\qquad - c_{23}P(r_mA_m + y_m ; \rn \setminus \cl B_R(0)) + c_{23}\Lambda |\sigma_m|/r_m \\ \label{eq:second xl min comp}
    &= \pc (\X;B_R(0))  + \mathrm{o}(1)\,,
\end{align}
where in the last line we have used \eqref{eq:bdry term dies}. Combining \eqref{eq:first xl min comp}-\eqref{eq:second xl min comp} and sending $m\to \infty$ gives the desired minimality inequality \eqref{eq:desired minimality inequality}. 
\end{proof}

For the remainder of this section we focus on the case where $c_{12}=c_{13}$. It will be convenient to rewrite the energy. Specifically, for a $(1,2)$-cluster $\X$ and energy $\pc(\cdot;B)$ corresponding to weights $c_{12}=c_{13}$ and $c_{23}$, we have
\begin{align}\label{eq:pc rewrite 2}
    \pc(\X;B) = c_{13} P(\X(1);B) + c_{23} \hn(\X(2,3) \cap B)
    \quad \textup{for every Borel $B\subset \rn$}\,.
\end{align}
Before presenting the proof of Theorem \ref{thm:double bubble}, we give a lemma on the ``symmetrization" of a $(1,2)$-cluster $\X$ that will be used several times. For a $(1,2)$-cluster $\X$, we define $\X^\ste$ via
\begin{align}\notag
    \X^\ste(1) = \X(1)^\ste\,,\quad \X^\ste(2)= \{x\in \rn : x_n > 0\}\setminus \X^\ste(1)\,,\quad \X^S(3) = \rn \setminus (\X^\ste(1) \cup \X^\ste(2))\,;
\end{align}
recall that $\X(1)^S$ denotes the Steiner symmetrization of $\X(1)$ over the plane $\{x\in \mathbb{R}^n:x_n=0\}$.

\begin{lemma}[Steiner inequality for $(1,2)$-clusters]\label{lemma:proj and symm}
    If $\bf c$ is a family of positive weights with $c_{12}=c_{13}$, $\X$ is a $(1,2)$-cluster in $\rn$, and there exists $a>0$ such that 
\begin{align}\label{eq:x2x3 convention}
    \mathbb{R}^{n-1}\times (a,\infty) \subset \X(2)^\one\,,\quad \mathbb{R}^{n-1}\times (-\infty,-a) \subset \X(3)^\one\,,
\end{align}  
    then, setting $E(\X):=\{\overline{x}\in \mathbb{R}^{n-1}:\hone(\X(1)_\ovx)=0 \}$,
\begin{enumerate}[label=(\roman*)]
\item $\X^S(2,3)\ehn E(\X) \times \{0\}$, and for any Borel $B\subset \rn$
\begin{align}\label{eq:sum proj and symm}
\pc(\X^\ste;B)= c_{13} P(\X(1)^\ste;B) + c_{23}\hn( (E(\X)\times \{0\} )\cap B) \,,\qquad \mbox{and}
\end{align}
\item for any $r>0$, 
\begin{align}\label{eq:steiner x ineq}
    P(\X(1);C_r) &\geq P(\X^\ste(1);C_r)\qquad\mbox{and} \\ \label{eq:proj x ineq}
    \hn(\X(2,3); C_r) &\geq \hn(E(\X) \cap B_r^{n-1}(\overline{0}))\,,
\end{align}
with equality in \eqref{eq:steiner x ineq} only if for $\hn$-a.e.~ $x\in B_r^{n-1}(\overline{0})$, $\X(1)_\ovx$ is an interval, and equality in \eqref{eq:proj x ineq} only if there exists $E'_r(\X)\ehn E(\X) \cap B_r^{n-1}(\overline{0})$ and $t:E'_r(\X) \to [-a,a]$ such that 
\begin{align}\label{eq:its piecewise flat}
\X(2,3) \cap C_r\overset{\mathcal{H}^{n-1}}&{=} \{(\overline{y},t(\overline{y})):\overline{y}\in E'_r(\X)\}\quad\mbox{and} \\ \label{eq:normal conclusion}
\nu_{\X(2)}(x) &= -e_n \quad \mbox{for $\hn$-a.e.~ $x\in \X(2,3) \cap C_r$}\,. 
\end{align}
\end{enumerate}
\end{lemma}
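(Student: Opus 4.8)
The plan is to derive everything from the line-slicing lemma (Lemma~\ref{lemma:slice}) and the rewritten energy \eqref{eq:pc rewrite 2}. As a preliminary reduction, \eqref{eq:x2x3 convention} and Fubini's theorem give $\X(1)_\ovx\subset(-a,a)$ up to $\hone$-null for $\hn$-a.e.\ $\ovx$, so $L_{\X(1)}\le 2a$ a.e., the symmetral $\X^\ste(1)=\X(1)^\ste$ is $\hn$-essentially contained in $\R^{n-1}\times(-a,a)$, and $\X(1)\cap C_{r'}$ is an essentially bounded set of locally finite perimeter, hence of finite perimeter, for each $r'$. For Part~(i): the definitions give $\X^\ste(2)\subset\{x_n>0\}$ and $\X^\ste(3)\subset\{x_n<0\}$ modulo $\hn$-null, so by Federer's theorem $\pa^*\X^\ste(2)\subset\{x_n\ge0\}$ and $\pa^*\X^\ste(3)\subset\{x_n\le0\}$, whence $\X^\ste(2,3)\subset\{x_n=0\}$, say $\X^\ste(2,3)=G\times\{0\}$ with $G\subset\R^{n-1}$ Borel. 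Applying Lemma~\ref{lemma:slice} to $\X^\ste$, for $\hn$-a.e.\ $\ovx$ we have $[\X^\ste(2,3)]_\ovx=\pastr(\X^\ste(2)_\ovx)\cap\pastr(\X^\ste(3)_\ovx)$; since $\X^\ste(1)_\ovx=(-L_{\X(1)}(\ovx)/2,L_{\X(1)}(\ovx)/2)$ by construction, this equals $\{0\}$ when $\ovx\in E(\X)$ and $\varnothing$ otherwise, which compared with $[\X^\ste(2,3)]_\ovx=\{0\}\Leftrightarrow\ovx\in G$ forces $G\ehn E(\X)$. Thus $\X^\ste(2,3)\ehn E(\X)\times\{0\}$, and \eqref{eq:sum proj and symm} follows from \eqref{eq:pc rewrite 2}.

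For Part~(ii): inequality \eqref{eq:steiner x ineq} is the classical Steiner inequality \eqref{eq:steiner inequality} with $B=B_r^{n-1}(\overline 0)$, applied to the finite-perimeter set $\X(1)\cap C_r$, whose symmetral coincides with $\X^\ste(1)$ inside $C_r$. For \eqref{eq:proj x ineq}, let $p\colon\rn\to\R^{n-1}$ be the orthogonal projection; since $p$ is $1$-Lipschitz, $\hn(\X(2,3)\cap C_r)\ge\hn(p(\X(2,3)\cap C_r))$, so it is enough to show $E(\X)\cap B_r^{n-1}(\overline 0)$ is $\hn$-contained in $p(\X(2,3)\cap C_r)$. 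For $\ovx$ in the good slicing set with $\ovx\in E(\X)\cap B_r^{n-1}(\overline 0)$, the slice $\X(1)_\ovx$ is $\hone$-null, so $\X(2)_\ovx$ and $\X(3)_\ovx$ are complementary modulo $\hone$-null and $[\X(2,3)]_\ovx=\pastr(\X(2)_\ovx)$ by Lemma~\ref{lemma:slice}(iii); as $\X(2)_\ovx$ contains $(a,\infty)$ but is $\hone$-null on $(-\infty,-a)$, it is neither $\hone$-null nor $\hone$-conull, so $\pastr(\X(2)_\ovx)$ is a nonempty subset of $[-a,a]$ and hence $\ovx\in p(\X(2,3)\cap C_r)$.

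For the equality cases: if equality holds in \eqref{eq:steiner x ineq} on $C_r$, then $P(\X(1);C_r)=P(\X(1)^\ste;C_r)$; since $B\mapsto P(\X(1);B\times\R)-P(\X(1)^\ste;B\times\R)$ is a nonnegative Radon measure on $\R^{n-1}$ by \eqref{eq:steiner inequality}, it vanishes on $B_r^{n-1}(\overline 0)$ and hence on every Borel subset, so for $r'<r$ the global Steiner deficit of the finite-perimeter set $\X(1)\cap C_{r'}$ is zero (symmetrization preserves slice lengths, so the lateral contribution on $\pa C_{r'}$ is unchanged), and \cite[Theorem 1.1]{BarCagFus13} then yields that $\X(1)_\ovx$ is an interval for $\hn$-a.e.\ $\ovx\in B_{r'}^{n-1}(\overline 0)$; letting $r'\uparrow r$ finishes. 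If equality holds in \eqref{eq:proj x ineq}, both estimates in the previous paragraph are equalities, so $\hn(p(\X(2,3)\cap C_r))=\hn(\X(2,3)\cap C_r)$; the area formula for the rectifiable set $\X(2,3)\cap C_r$ then forces its approximate tangent plane to be $\R^{n-1}\times\{0\}$ $\hn$-a.e.\ and $p$ to be $\hn$-a.e.\ injective, producing $E'_r(\X)\ehn E(\X)\cap B_r^{n-1}(\overline 0)$ and the graph function $t\colon E'_r(\X)\to[-a,a]$ of \eqref{eq:its piecewise flat}; since the tangent plane to $\pa^*\X(2)$ is horizontal $\hn$-a.e.\ on $\X(2,3)\cap C_r$ we get $\nu_{\X(2)}=\pm e_n$ there, and $\X(2)_\ovx=(t(\ovx),\infty)$ modulo $\hone$-null together with Lemma~\ref{lemma:slice}(iv) pins the sign, giving \eqref{eq:normal conclusion}. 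The main difficulty is the measure-theoretic bookkeeping, above all transporting the equality characterization of the Steiner inequality from the global statement of \cite[Theorem 1.1]{BarCagFus13} to the cylinder $C_r$; recognizing the Steiner deficit as a measure and using that Steiner symmetrization preserves slice lengths is the key point there, while the slicing lemma is what makes the projection estimate \eqref{eq:proj x ineq} and its equality case tractable.
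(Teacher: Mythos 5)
Your proof is correct, and it reaches the same conclusions by a route that is organized a bit differently from the paper's. Part (i) is essentially the paper's Step one in slightly different words. The main structural difference is in Part (ii), where you establish \eqref{eq:proj x ineq} through the chain $\hn(\X(2,3)\cap C_r)\ge\hn(p(\X(2,3)\cap C_r))\ge\hn(E(\X)\cap B_r^{n-1}(\overline 0))$, using that $p$ is $1$-Lipschitz together with the slicing lemma to get the second containment. The paper instead writes the area formula directly with the weight $1/|\nu_{\X(2)}\cdot e_n|$, which yields the identity \eqref{eq:cons of area 1a} and then the lower bound; for the equality case it records the explicit conditions $(a)$, $(b)$, $(c)$ and uses a separate preliminary estimate (Step two of the paper's proof) showing that $\hn$-null sets in $\R^{n-1}$ capture no mass of $\X(2,3)$. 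Your argument avoids that preliminary step because the conclusion $J^{\X(2,3)}p=1$ $\hn$-a.e.~(from saturating $\hn(S)\ge\int_S J^Sp=\int N\ge\hn(p(S))$) already prevents any portion of $\X(2,3)\cap C_r$ of positive measure from projecting onto a null set, which is the role Step two plays in the paper's Step four. Pinning the sign of $\nu_{\X(2)}$ via the $1$-dimensional slices and Lemma~\ref{lemma:slice}(iv) is common to both. You also supply a genuine addition that the paper passes over in a single sentence: the localization of the equality case of the classical Steiner inequality from global $B=\R^{n-1}$ down to $C_r$, via the observation that the Steiner deficit $B\mapsto P(\X(1);B\times\R)-P(\X(1)^S;B\times\R)$ is a nonnegative Radon measure and that intersecting with $C_{r'}$ produces a finite-perimeter set whose lateral contribution is symmetrization-invariant. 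Your preliminary reduction establishing that $\X(1)\cap C_{r'}$ has finite perimeter (using \eqref{eq:x2x3 convention} and Fubini to essentially contain $\X(1)$ in the strip $\R^{n-1}\times(-a,a)$) is needed for this and is correctly handled. A minor point of care: in concluding that $p$ is $\hn$-a.e.\ injective you should make explicit that the chain of inequalities $\hn(S)\ge\int_S J^Sp=\int_{\R^{n-1}}N\,d\hn\ge\hn(p(S))$ collapses to equalities; your phrasing presupposes this, and the reader must supply it, but it is standard. Overall the proposal is sound and arguably a cleaner packaging, at the cost of being somewhat terse in the equality bookkeeping that the paper spells out as conditions $(a)$--$(c)$.
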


\begin{proof}
The proof is divided into steps, with item $(i)$ in step one and item $(ii)$ further broken up into steps two through four.

\medskip

\noindent{\it Step one}: Here we prove $(i)$. We begin with \eqref{eq:pc rewrite 2} for $\X^\ste$, which reads
\begin{align}\notag
\pc(\X^\ste;B)= c_{13}P(\X^\ste(1);B) + c_{23}\hn(\pa^* \X^\ste(2) \cap \pa^* \X^\ste(3)\cap B)\,.
\end{align}
Therefore, $(i)$ would follow from the $\hn$-equivalence
\begin{align}\label{eq:hn equivalence}
    \pa^* \X^\ste(2) \cap \pa^* \X^\ste(3) \ehn E(\X) \times \{0\} \,.
\end{align}
Towards showing \eqref{eq:hn equivalence}, we first observe that by $\pa^* \X^S(j) \ehn \X^S(j)^{\scriptsize{(1/2)}}$ for $j=2,3$ (Federer's theorem) and the containments $\X^S(2)^\one\subset \{x_n>0\}$ and $\X^S(3)^\one \subset \{x_n<0\}$, 
\begin{align}\label{eq:first intermediate equiv}
    \pa^* \X^\ste(2) \cap \pa^* \X^\ste(3)  = \pa^* \X^\ste(2) \cap \pa^* \X^\ste(3) \cap \{x\in \rn: x_n = 0 \}\,.
\end{align}
By appealing to Lemma \ref{lemma:slice}, let $F\subset \mathbb{R}^{n-1}$ be a set such that $\hn(F^c)=0$ and if $\ovx \in F$, then $(i)$-$(iv)$ from that lemma hold for both $\X$ and $\X^S$. Applying in order $\hn (F^c)=0$, a rephrasing of the set intersection, and Lemma \ref{lemma:slice}.$(iii)$, we have
\begin{align}\notag
    \pa^* \X^\ste(2) \cap \pa^* \X^\ste(3) \cap \{x\in \rn: x_n = 0 \}\overset{\mathcal{H}^{n-1}}&{=} \pa^* \X^\ste(2) \cap \pa^* \X^\ste(3) \cap (F \times \{0\}) \\ \notag
    &= \{(\overline{x},0)\in   \pa^* \X^\ste(2) \cap \pa^* \X^\ste(3): \overline{x} \in F\} \\ \label{eq:second intermediate equiv}
    &= \{(\overline{x},0): 0 \in \pastr [\X^\ste(2)_\ovx]\cap \pastr [\X^\ste(3)_\ovx],\,\overline{x}\in F \}\,.
\end{align}
But by the definition of Steiner symmetrization,
\begin{align}\notag
    \pastr [\X^\ste(2)_\ovx] = \{\mathcal{H}^1(\X(1)_\ovx)/2 \}\quad \mbox{and}\quad   \pastr [\X^\ste(3)_\ovx] =\{-\mathcal{H}^1(\X(1)_\ovx)/2 \}\,,
\end{align}
so $0$ can only belong to both if $\mathcal{H}^1(\X(1)_\ovx)=0$. Thus the right hand side of \eqref{eq:second intermediate equiv} is 
\begin{align}\label{eq:third intermediate equiv}
    \{(\overline{x},0): \mathcal{H}^1(\X(1)_\ovx)=0 ,\,\overline{x}\in F \}\,.
\end{align}
Combining \eqref{eq:first intermediate equiv}-\eqref{eq:third intermediate equiv} and using $\hn(F^c)=0$ one last time, we arrive at \eqref{eq:hn equivalence} as desired.

\medskip

\noindent{\it Step two}: In steps two through four we prove $(ii)$. Since \eqref{eq:steiner x ineq} and the corresponding claim regarding equality cases is simply a restatement of the Steiner inequality \eqref{eq:steiner inequality} and the restriction that equality induces on the slices of $\X(1)$, we focus on the remainder of $(ii)$ involving $\X(2,3) \cap C_r$.

\medskip

As a preliminary, in this step we show that if $G\subset F$ is Borel measurable and $\hn(G)=0$, then 
\begin{align}\label{eq:g estimate}
    \hn\big(\X(2,3) \cap (G\times \mathbb{R}) \big)=0\,.
\end{align}
To prove \eqref{eq:g estimate}, it suffices to construct a sequence $\{G_j\}_j$ of subsets of $G$ with $\cup_j G_j=G$ and show that for any compact $K\cc \mathbb{R}$,
\begin{align}\label{eq:k estimate}
    \hn\big(\X(2,3) \cap (G_j \times K)\big)=0\qquad \forall j\,.
\end{align}
Let $G_j:=\{\overline{x}\in G: \mathcal{H}^0(\pastr [\X(2)_\ovx] \cap \pastr [\X(3)_\ovx] \cap K) \leq j\}$. By $G \subset F$ and Lemma \ref{lemma:slice}.$(i)$ (which guarantees the local finiteness of the perimeters of the slices), $\cup_j G_j = G$. To estimate the $\hn$-measure of $\X(2,3)\cap (G_j \times K)$, let $\p:\rn \to \mathbb{R}^{n-1}$ be the projection map $\p(x)=(x_1,\dots,x_{n-1})$. A direct computation shows that the tangential Jacobian of $\p$ with respect to the locally 
$\hn$-rectifiable set $\X(2,3)$ is given by
\begin{align}\notag
J^{\X(2,3)}\p(x) = \sqrt{\det \big(\nabla^{\X(2,3)} \p (x)^* \nabla^{\X(2,3)} \p(x)\big)} = |\nu_{\X(2)}(x)\cdot e_n|\quad\mbox{for $\hn$-a.e.~ $x\in \X(2,3)$}\,,
\end{align}
where $\nu_{\X(2)}(x)$ is the measure-theoretic outer unit normal to $\X(2)$. Using the fact that $\nu_{\X(2)}(x)\cdot e_n\neq 0$ on $G\times \mathbb{R}$ (from Lemma \ref{lemma:slice}.$(iv)$ and $G\subset F$), the area formula, and Lemma \ref{lemma:slice}.$(iii)$, we compute
\begin{align}\notag
  \int_{\X(2,3) \cap (G_j\times K)}1\,d\hn(x) &= \int_{\X(2,3)\cap (G_j\times K)} \frac{J^{\X(2,3)}\p(x)}{|\nu_{\X(2)}(x)\cdot e_n|}\,d\hn(x) \\ \notag
  &= \int_{G_j}\bigg(\int_{ \X(2,3)\cap (\{\overline{y}\} \times K)}\frac{1}{|\nu_{\X(2)}(x)\cdot e_n|} \,d\mathcal{H}^0(x)\bigg)\,d\mathcal{H}^{n-1}(\overline{y}) \\ \notag
  &\leq \int_{G_j} j\times \sup_{x\in (\{\overline{y}\}\times K) \cap \X(2,3)} \frac{1}{|\nu_{\X(2)}(x)\cdot e_n|}\,d\hn(\overline{y})=0\,,
\end{align}
where in the last equality we have used $\nu_{\X(2)}\cdot e_n \neq 0$ on $G \times \mathbb{R}$ and $\hn(G_j) \leq \hn(G)=0$ to conclude that the integral vanishes. This completes \eqref{eq:g estimate}.

\medskip

\noindent{\it Step three}: In the third step we prove the inequality \eqref{eq:proj x ineq}, with the equality case being analyzed in step four. Let $\X(2,3)^\parallel=\{x\in \X(2,3):\nu_{\X(2)}(x) \cdot e_n \neq 0\}$ and $\X(2,3)^\perp =\X(2,3) \setminus \X(2,3)^\parallel$. Then by the area formula and the fact that $\nu_{\X(2)}(x)\cdot e_n\neq 0$ on $F\times \mathbb{R}$,
\begin{align}\notag
  \int_{\X(2,3) \cap C_r} 1\,d\hn(x) &= \int_{\X(2,3)^\parallel\cap ([F\cap E(\X)]\times \mathbb{R}) \cap C_r} \frac{J^{\X(2,3)}\p(x)}{|\nu_{\X(2)}(x)\cdot e_n|}\,d\hn(x)\\ \notag
  &\qquad + \hn \big(\big[\X(2,3)^\perp \cup ((F^c\cup E(\X)^c)\times \mathbb{R})\big]\cap C_r\big) \\ \notag
  &= \int_{F \cap E(\X) \cap B_r^{n-1}(\overline{0})}\bigg(\int_{ \X(2,3)\cap(\{\overline{y}\}\times \mathbb{R})}\frac{1}{|\nu_{\X(2)}(x)\cdot e_n|}\,d\mathcal{H}^0(x)\bigg)\,d\hn(\overline{y}) \\ \label{eq:cons of area}
  &\qquad + \hn \big(\big[\X(2,3)^\perp \cup ((F^c\cup E(\X)^c)\times \mathbb{R})\big]\cap C_r\big)\,. 
\end{align}
Now if $\overline{y}\in F \cap E(\X) \cap B_r^{n-1}(\overline{0})$, then by the definitions of $F$ and $E(\X)$, $\X(2)_{\overline{y}}$ and $\X(3)_{\overline{y}}$ are sets of locally finite perimeter in $\mathbb{R}$ and
\begin{align}\label{eq:x1 not on slice}
\mathcal{H}^1(\X(1)_{\overline{y}})=0\,.
\end{align}
Furthermore, by \eqref{eq:x2x3 convention} and Lemma \ref{lemma:slice}.$(ii)$, 
\begin{align}\label{eq:ints in x2x3}
(a,\infty)\overset{\mathcal{H}^1}{\subset}\X(2)_{\overline{y}}^\oner\quad\mbox{and}\quad (-\infty,-a)\overset{\mathcal{H}^1}{\subset}\X(3)_{\overline{y}}^\oner\quad \forall \,\overline{y}\in F \cap E(\X) \cap B_r^{n-1}(\overline{0})\,.
\end{align}
The combination of \eqref{eq:x1 not on slice} and \eqref{eq:ints in x2x3} implies that $\pastr [\X(2)_{\overline{y}}] \cap \pastr [\X(3)_{\overline{y}}] \neq \varnothing$, which by Lemma \ref{lemma:slice}.$(iii)$ yields
\begin{align}\label{eq:reduced bdry projects}
    \pa^* \X(2) \cap \pa^* \X(3) \cap (\{y\}\times \mathbb{R})\neq \varnothing\quad \forall y\in F \cap E(\X) \cap B_r^{n-1}(\overline{0})\,.
\end{align}
Inserting this into \eqref{eq:cons of area}, we conclude that
\begin{align}\label{eq:cons of area 1a}
    \hn(\X(2,3) \cap C_r) &= \int_{F \cap E(\X) \cap B_r^{n-1}(\overline{0})}\bigg(\int_{ \X(2,3)\cap(\{\overline{y}\}\times \mathbb{R})}\frac{1}{|\nu_{\X(2)}(x)\cdot e_n|}\,d\mathcal{H}^0(x)\bigg)\,d\hn(\overline{y}) \\ \notag
  &\qquad + \hn \big(\big[\X(2,3)^\perp \cup ((F^c\cup E(\X)^c)\times \mathbb{R})\big]\cap C_r\big) \\ \label{eq:cons of area 2}
  &\geq \int_{F \cap E(\X) \cap B_r^{n-1}(\overline{0})} 1\,d\hn(\overline{y})=  \hn(F \cap E(\X) \cap B_r^{n-1}(\overline{0}))\,.
\end{align}
The inequality \eqref{eq:cons of area 2} is equivalent to \eqref{eq:proj x ineq} since $\hn(F^c)=0$. 

\medskip

\noindent{\it Step four}: It remains to demonstrate the two necessary conditions \eqref{eq:its piecewise flat}-\eqref{eq:normal conclusion} for equality in \eqref{eq:proj x ineq}. If equality holds in \eqref{eq:proj x ineq}, then it holds in \eqref{eq:cons of area 2} since $\hn(F^c)=0$. Inspecting \eqref{eq:cons of area 2}, we see that equality forces several conditions:
\begin{align}\notag
  &\mbox{$(a)$ for $\hn$-a.e.~ $\overline{y}\in F \cap E(\X) \cap B_r^{n-1}(\overline{0})$, $\X(2,3)\cap (\{\overline{y}\}\times \mathbb{R})$ is a singleton $\{(\overline{y},t(\overline{y}))\}$}\,, \\ \notag
  &\mbox{$(b)$ for these $\overline{y}\in F \cap B_r^{n-1}(\overline{0})$ such that $\mathcal{H}^0\big(\X(2,3)\cap (\{\overline{y}\}\times \mathbb{R})\big)=1$, $\nu_{\X(2)}((\overline{y},t(\overline{y})))=\pm e_n$, and } \\ \notag
  &\mbox{$(c)$ $\hn \big(\big[\X(2,3)^\perp \cup ((F^c\cup E(\X)^c)\times \mathbb{R})\big]\cap C_r\big)=0$}\,.
\end{align}
By $(c)$ and the fact that $\nu_{\X(2)}(x) \cdot e_n\neq 0$ on $F \times \mathbb{R}$,
\begin{align}\notag
    \hn(\X(2,3) \cap C_r) &= \hn \big(\X(2,3)^\parallel \cap \big([F\cap E(\X) ]\times \mathbb{R}\big)\cap C_r\big)\\ \label{eq:F cap E gobbles everything}
    &=\hn \big(\X(2,3) \cap \big([F\cap E(\X) ]\times \mathbb{R}\big)\cap C_r\big)\,.
\end{align}
Also, due to $(a)$ and Lemma \ref{lemma:slice}.$(iii)$, for $\hn$-a.e.~ $\overline{y}\in F \cap E(\X)\cap B_r^{n-1}(\overline{0})$, $\pastr [\X(2)_{\overline{y}}] \cap \pastr [\X(2)_{\overline{y}}]$ consists of the single point $t(\overline{y})$. In addition, by \eqref{eq:ints in x2x3}, $t(\overline{y}) \in [-a,a]$ and
\begin{align}\label{eq:t normal is down}
\nu_{\X(2)_{\overline{y}}}(t(\overline{y}))=-1 \quad \mbox{for $\hn$-a.e.~ $\overline{y}\in F \cap E(\X)\cap B_r^{n-1}(\overline{0})$}\,.    
\end{align}
So then using in order \eqref{eq:t normal is down}, Lemma \ref{lemma:slice}.$(iv)$, and $(b)$, we have
\begin{align}\notag
-1 &= \nu_{\X(2)_{\overline{y}}}(t(\overline{y})) = \frac{\nu_{\X(2)}(\overline{y},t(\overline{y}))\cdot e_n}{ |\nu_{\X(2)}(\overline{y},t(\overline{y}))\cdot e_n|} \\ \label{eq:normal points down} 
&= \nu_{\X(2)}(\overline{y},t(\overline{y}))\cdot e_n \quad \mbox{for $\hn$-a.e.~ $\overline{y}\in F \cap E(\X) \cap B_r^{n-1}(\overline{0})$}\,.
\end{align}
Therefore, by $(a)$ and \eqref{eq:normal points down}, we may choose Borel measurable $E_r'(\X)\subset F \cap E(\X) \cap B_r^{n-1}(\overline{0})$ with
\begin{align}\label{eq:eprime sees it all}
\hn(F \cap E(\X) \cap B_r^{n-1}(\overline{0}) \setminus E'_r(\X))=0
\end{align}
such that
\begin{align}\label{eq:its piecewise flat 3}
\X(2,3) \cap C_r \cap (E_r'(\X)\times \mathbb{R})&= \{(\overline{y},t(\overline{y})):\overline{y}\in E'_r(\X)\}\quad\mbox{and} \\ \label{eq:normal conclusion 3}
\nu_{\X(2)}(x) &= -e_n \quad \mbox{for $\hn$-a.e.~ $x\in \X(2,3) \cap C_r \cap (E_r'(\X)\times \mathbb{R})$}\,. 
\end{align}
Since $\hn(F \cap E(\X) \cap B_r^{n-1}(\overline{0}) \setminus E'_r(\X))=0$, by our preliminary estimate \eqref{eq:g estimate} and \eqref{eq:its piecewise flat 3} we find
\begin{align}\notag
   \hn \big(\X(2,3) \cap \big([F\cap E(\X) ]\times \mathbb{R}\big)\cap C_r\big) &= \hn \big(\X(2,3) \cap (E'_r(\X) \times \mathbb{R})\cap C_r\big) \\ \label{eq:f' sees all} 
   &= \hn\big(\X(2,3) \cap \{(\overline{y},t(\overline{y})):\overline{y}\in E'_r(\X)\}\big)\,.
\end{align}
Finally, \eqref{eq:f' sees all} and \eqref{eq:F cap E gobbles everything} guarantee that
\begin{align}\label{eq:e'r sees all}
    \hn( \X(2,3)\cap (E_r'(\X)^c \times \mathbb{R}) \cap C_r) = 0\,,
\end{align}
in which case \eqref{eq:its piecewise flat}-\eqref{eq:normal conclusion} follow from \eqref{eq:its piecewise flat 3}-\eqref{eq:normal conclusion 3}.
\end{proof}

\begin{remark}[Alternative proof of minimality]\label{remark:alternate proof of minimality}
    When $c_{12}=c_{13}$, the symmetrization inequalities from Lemma \ref{lemma:proj and symm} can be combined with the characterization of minimizing liquid drops in Theorem \ref{thm:liquid drops} to give an short proof of the $\bf c$-local minimality of $\xl$.
\end{remark}

\begin{proof}[Proof of Theorem \ref{thm:double bubble}]
The minimality of $\xl$ has already been proved in Theorem \ref{thm:local minimality of lens}. The uniqueness proof when $c_{12}=c_{13}$ is divided into steps, and the main outline is to combine the various types of rigidity contained in \eqref{eq:asymptotic exp 2} (the asymptotic expansion), Lemma \ref{lemma:proj and symm} (the ``flatness" in \eqref{eq:its piecewise flat}-\eqref{eq:normal conclusion}), Theorem \ref{thm:liquid drops} (characterization of minimizing liquid drops), and Theorem \ref{thm:symm} (rigidity in the Steiner inequality) to conclude that up to rotation and translation, $\X=\xl$. 

\medskip

\noindent{\it Step one}: Let $\X$ be a $\bf c$-locally minimizing $(1,2)$-cluster in $\rn$, with the added assumption that $\X$ has planar growth at infinity if $n\geq 8$. In this step we collect some basic properties of $\X$. 
    
\medskip
    
    First, by Lemma \ref{lemma:density estimates}, there exists $R>0$ such that $\X(1)\cc B_R(0)$. Second, by the boundedness of $\X(1)$ and the planar growth of $\X$ at infinity in $\rn$ for $n\geq 8$, we may apply Corollary \ref{corollary:asymptotic expansion} to find $R_0>R$, such that, up to a rotation and translation, $\X(2,3)\setminus C_{R_0}$ coincides with the graph of a solution $u:\mathbb{R}^{n-1}\setminus B_{R_0}^{n-1}(\overline{0})$ to the minimal surface equation satisfying the expansion at infinity
\begin{equation}\label{eq:asymptotic exp 2}
  u(\ovx) = \begin{cases}
  0 & \overline{x}\in \mathbb{R}^1,\,\, n=2\\ 
    0 + \displaystyle\frac{b}{|\overline{x}|^{n-3}}+ \displaystyle\frac{\overline{c}\cdot \overline{x}}{|\overline{x}|^{n-1}}+ \mathrm{O}(|\overline{x}|^{1-n})  & \overline{x}\in \R^{n-1},\,\,n\geq 3 \\ 
\end{cases}
\end{equation}
for some $b \in \mathbb{R}$, which we assume is $0$ when $n=3$, and $\overline{c} \in \mathbb{R}^{n-1}$. 

\medskip

Next, we claim that there exists $a>0$ such that,
\begin{align}\label{eq:containment in strip main proof}
    \cl \X(1) \cup \X(2,3) \cc \mathbb{R}^{n-1}\times [-a,a]\,.
\end{align}
To see this, note that for large $r>R_0$, the rectifiable varifold $V_r=\var (\X(2,3)\cap C_r, 1)$ is stationary in the open set $\rn \setminus \big[\cl \X(1) \cup \big(\pa B_r^{n-1}(\overline{0})\times [-1,1]\big)\big]$. Since $\cl \X(1) \cup (\pa B_r^{n-1}(\overline{0})\times [-1,1])$ is compact, we can apply the convex hull property \cite[Theorem 19.2]{Sim83} to deduce that
\begin{align}\label{eq:convex hull}
    \spt\, V_r = \cl (\X(2,3) \cap C_r) \subset \mathrm{Conv}\, \big( \cl \X(1) \cup \big(\pa B_r^{n-1}(\overline{0})\times [-1,1]\big)\big)\,,
\end{align}
where $\mathrm{Conv}$ denotes the convex hull. Let $a\geq 1$ be large enough so that $\cl \X(1) \cc \mathbb{R}^{n-1}\times [-a,a]$. Then sending $r\to \infty$ in \eqref{eq:convex hull} yields \eqref{eq:containment in strip main proof}. As a corollary, up to interchanging $\X(2)$ and $\X(3)$, 
\begin{align}\label{eq:x2x3 convention 2}
    \mathbb{R}^{n-1}\times (a,\infty) \subset \X(2)^\one\,,\quad \mathbb{R}^{n-1}\times (-\infty,-a) \subset \X(3)^\one\,.
\end{align} 
This will allow us to freely use Lemma \ref{lemma:proj and symm} on $\X$ and any compactly supported variation of $\X$, since \eqref{eq:x2x3 convention 2} is the only assumption on $\X$ in the statement of that lemma.

\medskip

\noindent{\it Step two}: In this step we prove via a comparison argument that $\X$ achieves equality in \eqref{eq:steiner x ineq} and \eqref{eq:proj x ineq} for all $r>R_0$. Let us begin by noting that by $\X(1)\cc B_R(0)$ and \eqref{eq:containment in strip main proof}, 
\begin{align}\label{eq:ex containment}
    E(\X):=\{\overline{x}\in \mathbb{R}^{n-1}:\hone(\X(1)_\ovx)=0 \} \supset \mathbb{R}^{n-1}\setminus B_{R}^{n-1}(\overline{0})\,.
\end{align}
Thus by \eqref{eq:x2x3 convention 2}-\eqref{eq:ex containment} and the fact that $\hn$-measure decreases upon projection onto $\{x_n=0\}$,
\begin{align}\notag
  \hn (\X(2,3) \cap  C_r \setminus C_s) &\geq\omega_{n-1}(r^{n-1}-s^{n-1})  \\ \label{eq:increasing 1}
  &= \hn(E(\X) \cap B_r^{n-1}(\overline{0}) \setminus B_s^{n-1}(\overline{0}) ) \qquad \forall R<R_0<s<r\,.
\end{align}
Therefore, the function $\ee:(R_0,\infty)\to \mathbb{R}$ defined by
\begin{equation}\notag
   \ee(r)= c_{13}\big[P(\X(1))-P(\X^S(1)) \big]+ c_{23}\big[\hn(\X(2,3) \cap C_r)- \hn(E(\X) \cap B_r^{n-1}(\overline{0}))\big] 
\end{equation}
satisfies
\begin{align}\label{eq:excess increases}
    \mbox{$\ee(r)$ is increasing for $r>R_0$}\,,
\end{align}
by \eqref{eq:increasing 1}, and, by \eqref{eq:steiner x ineq}-\eqref{eq:proj x ineq},
\begin{align}\label{eq:excess is positive}
    0 \leq \ee(r) \quad \forall r>R_0\,.
\end{align}

\medskip

We show now that $\ee(r)=0$ for all $r>R_0$. Let us first consider the case that $n\geq 3$.  For each $r>R_0$, let $\Pi_r$ be the plane
$$
\Pi_r = \{(\ovx, \overline{c} \cdot \ovx / r^{n-1}): \ovx \in \mathbb{R}^{n-1}\}\,,
$$
and let $H_r^+$ and $H_r^-$ be the halfspaces with common boundary $\Pi_r$ such that $\nu_{H_r^+}\cdot e_n<0$. $\Pi_r$ is chosen in this fashion because on $\partial C_r$, it is very close to $\X(2,3)$. More precisely, by the definition of $\Pi_r$ and since $\X(2,3)\setminus C_R$ is the graph of $u$, for $r>R_0$,
\begin{align}\notag
  \pa C_r \cap ((H_r^+)^\one \Delta \X(2)^\one) \subset \big\{(\ovx,t):\ovx\in \pa B_r^{n-1}(\overline{0}),\,\, |t - u(\ovx)| \leq |u(\ovx) - \overline{c}\cdot \overline{x}/r^{n-1}| \big\}\,.
\end{align}
By the asymptotic expansion \eqref{eq:asymptotic exp 2}, we may thus estimate
\begin{align}\label{eq:pa cr estimate}
    \hn \big( \pa C_r \cap ((H_r^+)^\one \Delta \X(2)^\one) \big) \leq \omega_{n-2}r^{n-2}\mathrm{O}(r^{1-n}) = \mathrm{O}(r^{-1})\,.
\end{align}
Next, let $T_r$ be a rotation matrix such that $T_r(\{x_n=0\})=\Pi_r$, and define a new $(1,2)$-cluster $\X_r$ by replacing $\X(j) \cap C_r$ with $T_r(\X^S(j))\cap C_r$ for each $j$, so
\begin{align}\notag
    \X_r(1) &= T_r(\X^S(1))\,,\\   \notag
     \X_r(2)&= \big( T_r(\X^S(2)) \cap C_r\big) \cup (\X(2) \setminus C_r)= \big(H_r^+ \setminus T_r(\X^S(1)) \cap C_r\big) \cup (\X(2) \setminus C_r)\,, \\ \notag
      \X_r(3)&= \big( T_r(\X^S(3)) \cap C_r\big) \cup (\X(3) \setminus C_r)= \big(H_r^- \setminus T_r(\X^S(1)) \cap C_r\big) \cup (\X(3) \setminus C_r)\,.
\end{align}
Then for large $r$, $\X(j) \Delta \X_r(j) \cc B_{r+1}^{n-1}(\overline{0})\times (-a-1,a+1)$. Also, recalling the characterization of $\X^S(2,3)$ from Lemma \ref{lemma:proj and symm}.$(i)$, we have
\begin{align}\label{eq:rotated char}
    \pa^* \X_r(2) \cap \pa^* \X_r(3) \cap C_r = T_r( \X^S(2,3)) \cap C_r =  T_r\big(E(\X) \times \{0\} \big) \cap C_r\,.
\end{align}
By the $\bf c$-local minimality of $\X$, \eqref{eq:rotated char}, and the fact that $\X$ and $\X_r$ agree outside $C_r$, we estimate
\begin{align}\notag
   0&\leq \pc(\X_r;B_{r+1}^{n-1}(\overline{0})\times (-a-1,a+1)) - \pc(\X;B_{r+1}^{n-1}(\overline{0})\times (-a-1,a+1)  \\ \notag
   &= c_{13}P(T_r(\X^S(1))) + c_{23}\hn\big(T_r\big(E(\X) \times \{0\}\big) \cap C_r \big) + c_{23}P(\X_r(2);\pa C_r) \\ \notag
   &\qquad - c_{13}P(\X(1)) - c_{23}\hn (\X(2,3) \cap C_r) \\ \notag
   &= c_{13}\big[P(\X^S(1)) - P(\X(1)) \big] +  c_{23}\hn\big(T_r\big(E(\X) \times \{0\}\big) \cap C_r \big) + c_{23}P(\X_r(2);\pa C_r) \\ \label{eq:big cr calc} 
   &\qquad - c_{23}\hn (\X(2,3) \cap C_r)\,.
\end{align}
By using in order $T_r(\{x_n=0\}) = \Pi_r$ and the definition of $E(\X)$, the definition of $\Pi_r$ and the ``cut-and-paste" formula \eqref{eq:cut and paste} along $\pa C_r$, $\sqrt{1+t^2} \leq 1+t^2$ and \eqref{eq:pa cr estimate}, and $n\geq 3$, we bound the second, third, and fourth terms from above:
\begin{align}\notag
  &\hspace{-1cm}c_{23}\hn\big( T_r\big(E(\X) \times \{0\}\big) \cap C_r \big) + c_{23}P(\X_r(2);\pa C_r)- c_{23}\hn (\X(2,3) \cap C_r) \\ \notag
  &=c_{23}\hn( \Pi_r \cap C_r)  - c_{23}\hn\big(T_r(\{\overline{x}:\mathcal{H}^1(\X(1)_\ovx)>0\})\big) + c_{23}P(\X_r(2);\pa C_r) \\ \notag
  &\qquad - c_{23}\hn (\X(2,3) \cap C_r) \\ \notag
  &= c_{23}\int_{B_r^{n-1}(\overline{0})}\sqrt{1+|\overline{c}|^2/r^{2n-2} }\,d\overline{x} - c_{23}\hn\big(\{\overline{x}:\mathcal{H}^1(\X(1)_\ovx)>0\}\big)\\ \notag
  &\qquad + c_{23}\hn\big((\X(2)^\one \Delta (H_r^+)^\one )\cap \pa C_r\big) - c_{23}\hn (\X(2,3) \cap C_r)  \\ \notag
    &\leq c_{23} \omega_{n-1}r^{n-1} + c_{23}|\overline{c}|^2\omega_{n-1}r^{n-1+2-2n} - c_{23}\hn\big(\{\overline{x}:\mathcal{H}^1(\X(1)_\ovx)>0\}\big) \\ \notag
    &\qquad + \mathrm{O}(r^{-1})  - c_{23}\hn (\X(2,3) \cap C_r)\\ \label{eq:Cr comp}
    &= c_{23}\hn(E(\X) \cap B_r^{n-1}(\overline{0}) ) - c_{23}\hn (\X(2,3) \cap C_r) +  \mathrm{O}(r^{-1})\,.
\end{align}
Plugging the bound \eqref{eq:Cr comp} into \eqref{eq:big cr calc} for $r>R_0$, we arrive at
\begin{align}\notag
   0\leq  c_{13}\big[P(\X^S(1))- P(\X(1)) \big]+ c_{23}\big[ \hn(E(\X) \cap B_r^{n-1}(\overline{0}))-\hn(\X(2,3) \cap C_r)\big]+ \mathrm{O}(r^{-1})\,.
\end{align}
After moving the first two terms to the other side, this inequality says in terms of $\ee(r)$ that
\begin{align}\label{eq:excess vanishes}
   \ee(r) \leq  \mathrm{O}(r^{-1})\,.
\end{align}
In summary, by combining \eqref{eq:excess increases}, \eqref{eq:excess is positive}, and \eqref{eq:excess vanishes}, we have shown that for $n\geq 3$, $\ee(r)$ is increasing, non-negative, and bounded from above by $\mathrm{O}(r^{-1})$ as $r\to \infty$ --  so $\ee(r)$ must be $0$ for all $r>R_0$ if $n\geq 3$. The same conclusion when $n=2$ follows by testing the minimality of $\X$ against $\X^S$, which is acceptable by \eqref{eq:asymptotic exp 2}, to find $\mathbf{e}(r) \leq 0$, and recalling that $\mathbf{e}(r) \geq 0$ from \eqref{eq:excess is positive}. In fact, since \eqref{eq:steiner x ineq}-\eqref{eq:proj x ineq} guarantee that both the terms in $\ee(r)$ are each non-negative for $r>R_0$, $\ee(r) \equiv 0$ on $(R_0,\infty)$ actually gives the finer information 
\begin{align}\label{eq:finer info 1}
   &P(\X(1))= P(\X^S(1))\quad\mbox{and}  \\ \label{eq:finer info 2}
   &\hn(\X(2,3) \cap C_r) = \hn(E(\X) \cap B_r^{n-1}(\overline{0}))\qquad \forall r>R_0\,. 
\end{align}

\medskip

\noindent{\it Step three}: Let $\rl$ be the radius of the disk $\xl(1)^\one \cap \{x_n=0\}$. Here we show that \eqref{eq:finer info 1}-\eqref{eq:finer info 2} force
\begin{align}\label{eq:its a big sheet with a hole}
    \X(2,3) \overset{\hn}&{=} \{x_n = 0 \} \setminus \big(\cl B_{\rl}^{n-1}(0) \times \{0\}\big)\quad \mbox{and} \\ \label{eq:xs is the lens}
    \X^S &= \xl\quad \mbox{up to translations of $\X^S(1)$ along $\{x_n=0\}$ and $\mathcal{L}^n$-null sets}\,.
\end{align}
First, according to \eqref{eq:its piecewise flat}-\eqref{eq:normal conclusion} from Lemma \ref{lemma:proj and symm}, the equality \eqref{eq:finer info 2} for every $r>R_0$ entails the existence of $E'(\X)\ehn E(\X)$ and $t:E'(\X) \to [-a,a]$ such that 
\begin{align}\label{eq:its piecewise flat 2}
\X(2,3) \overset{\hn}&{=} \{(\overline{y},t(\overline{y})):\overline{y}\in E'(\X)\}\quad\mbox{and} \\ \label{eq:normal conclusion 2}
\nu_{\X(2)}(x) &= -e_n \quad \mbox{for $\hn$-a.e.~ $x\in \X(2,3)$}\,. 
\end{align}
By a simple argument using the divergence theorem \cite[Lemma 2.2]{SteZie94}, the fact that $\nu_{\X(2)}=-e_n$ $\hn$-a.e.~ on $\X(2,3)$ implies that for every $x\in \X(2,3)$ and $B_r(x)$ with $|B_r(x) \setminus (\X(2) \cup \X(3))|=0$, which ensures that $\pa^* \X(2) \cap B_r(x) = \X(2,3)$, we have
\begin{align}\notag
\pa^* \X(2)\cap B_r(x)=P_x \cap B_r(x)\quad \mbox{for the plane $P_x\ni x$ with normal $-e_n$}\,.
\end{align}
Thus for any compact $K\cc \rn$ such that $\cl \X(1) \subset K$ and $\rn\setminus K$ is connected, 
\begin{align}\label{eq:outside is a plane}
\X(2,3) \setminus K = \{ x_n=0\} \setminus K\,,
\end{align}
where we have used \eqref{eq:asymptotic exp 2} (which says that asymptotically $\X(2,3)$ is $\{x_n=0\}$) to eliminate the possiblity that $\X(2,3)$ is a vertical translation of $\{x_n=0\}$. In particular,
\begin{align}\notag
    \X(j) \Delta \xl(j) \subset B_{R_0}(0) \quad \forall 1\leq j \leq 3\,.
\end{align}
Now we can test the minimality of $\X$ against $\xl$ on $C_r$ for any $r>R_0$ to obtain
\begin{align}\label{eq:we tested it}
    \pc (\X ; C_r) \leq \pc (\xl ; C_r)\,.
\end{align}
By \eqref{eq:energy of symmetric cluster} applied to $\X^S$, \eqref{eq:finer info 1}-\eqref{eq:finer info 2}, \eqref{eq:we tested it}, and then \eqref{eq:energy of symmetric cluster} applied to $\xl$, for any $r>R_0$
\begin{align}\notag
2c_{13}\mathcal{F}_{c_{23}/(2c_{13})}(\X^S(1) \cap H;H) +  c_{23}\omega_{n-1}r^{n-1} &=\pc (\X^S;C_r) = \pc(\X; C_r)\\ \notag
&\leq \pc(\xl;C_r) \\ \notag
&= 2c_{13}\mathcal{F}_{c_{23}/(2c_{13})}(\xl(1) \cap H;H) +  c_{23}\omega_{n-1}r^{n-1} \,.
\end{align}
But by Theorem \ref{thm:liquid drops}, $\xl \cap \{x_n>0 \}$ is the unique, volume-constrained minimizer for $\mathcal{F}_{c_{23}/(2c_{13})}$ up to horizontal translations and $\mathcal{L}^n$-null sets. This immediately yields \eqref{eq:xs is the lens}.

\medskip

Turning now towards \eqref{eq:its a big sheet with a hole}, let us ignore the horizontal translation and null sets, so that $\xl=\X^S$. We define the function $L_{\X(1)}:\mathbb{R}^{n-1} \to [0,\infty)$ by
\begin{align}\notag
    L_{\X(1)}(\ovx) = \mathcal{H}^1(\X(1)_\ovx) = \mathcal{H}^1(\X^S(1)_\ovx)\,.
\end{align}
Since $\X^S = \xl$, the precise representative of $L_{\X(1)}$ (see Subsection \ref{subsec:sfop prelim}) is given by
\begin{align}\label{eq:prec rep of L}
    L_{\X(1)}^*(\ovx) = \mathcal{H}^1( \xl(1)_\ovx) =: L_{\rm lens}(\ovx)\,.
\end{align}
Therefore
\begin{align}\label{eq:whats ex}
E'(\X) \ehn E(\X) \ehn \mathbb{R}^{n-1} \setminus B_{\rl}^{n-1}(\overline{0})\quad \mbox{and thus }\quad |\X(1) \setminus C_{\rl}|=0 \,. 
\end{align}
By the volume density estimate \eqref{eq:lower volume bound}, this yields $\cl \X(1) \setminus \cl C_{\rl} = \varnothing$. We can therefore choose $r'>0$ such that $\cl \X(1) \subset \cl B_{\rl}^{n-1}(\overline{0}) \times [-r',r']$. Since this set is compact and connected, \eqref{eq:outside is a plane} says that
\begin{align}\label{eq:whats x23}
  \X(2,3) \setminus \big( \cl B_{\rl}^{n-1}(\overline{0}) \times [-r',r']\big) = \{x_n=0\} \setminus \big(\cl B_{\rl}^{n-1}(0) \times \{0\}\big)\,.
\end{align}
Combined with \eqref{eq:whats ex} and \eqref{eq:its piecewise flat 2}, \eqref{eq:whats x23} implies \eqref{eq:its a big sheet with a hole}.

\medskip

\noindent{\it Step four}: Finally we conclude the proof of uniqueness by showing that $\xl = \X$ up to the rotations, translations, and null sets we have accrued thus far. By the fact that $P(\X^S(1); \partial C_r)=P(\xl(1);\pa C_r)=0$, \eqref{eq:steiner inequality},  and \eqref{eq:finer info 1}, 
\begin{align}\notag
P(\X^S(1))&=P(\X^S(1);B_{\rl}^{n-1}(\overline{0})\times \mathbb{R})\leq P(\X(1) ; B_{\rl}^{n-1}(\overline{0})\times \mathbb{R}) \\ \notag
&\leq P(\X(1);B_{\rl}^{n-1}(\overline{0})\times \mathbb{R}) + P(\X(1);\partial B_{\rl}^{n-1}(\overline{0})\times \mathbb{R})= P(\X(1))= P(\X^S(1))\,.
\end{align}
Thus these are all equalities, and so 
\begin{align}\label{eq:equality over lens}
    P(\X^S(1);B_{\rl}^{n-1}(\overline{0})\times \mathbb{R}) = P(\X(1) ; B_{\rl}^{n-1}(\overline{0})\times \mathbb{R})\,.
\end{align}
Furthermore, by \eqref{eq:prec rep of L}
\begin{align}\label{eq:prec rep pos on B}
L^*_{\X(1)}(\ovx)=L_{\rm lens}(\ovx)>0\quad \forall \ovx \in  B_{\rl}^{n-1}(\overline{0})\,.
\end{align}
Lastly, by $\X^S(1) = \xl(1)$ and the explicit description of $\pa \xl(1)$ as two spherical caps,
\begin{align}\label{eq:no vertical parts 2}
   \mathcal{H}^{n-2}\big(\{x\in \pa^* (\X(1)^S) : \nu_{E^S}(x) \cdot e_n = 0 \}\cap (B_{\rl}^{n-1}(\overline{0})\times \mathbb{R})\big)=0
\end{align}
(in fact it is empty). We therefore have a connected $(n-1)$-dimensional set $\Omega = B_{\rl}^{n-1}(\overline{0})$, a set of finite perimeter $\X(1)$ achieving equality in the Steiner inequality \eqref{eq:steiner inequality} over $\Omega$ (by \eqref{eq:equality over lens}), and a distribution function $L^*_{\X(1)}$ that is strictly positive $\mathcal{H}^{n-2}$-a.e.~ on $\Omega$ (by \eqref{eq:prec rep pos on B}) and satisfies \eqref{eq:no vertical parts} (by \eqref{eq:no vertical parts 2}). These are exactly the assumptions of Theorem \ref{thm:symm}, which we apply then to conclude that $\X(1) \overset{\mathcal{L}^n}{=} \X^S(1)$ up to a vertical translation. However, we know that $\X(2,3)\ehn \{x_n = 0 \} \setminus \big(\cl B_{\rl}^{n-1}(0) \times \{0\}\big)$ by \eqref{eq:its a big sheet with a hole}, and it is impossible that the $(1,2)$-cluster $\X$ could have interfaces given by
\begin{align}\notag
  \X(1,2) \cup \X(1,3) \ehn  \pa^* \xl(1) + te_n\mbox{ for $t\neq 0$}\,, \quad \X(2,3) \ehn \{x_n = 0 \} \setminus \big(\cl B_{\rl}^{n-1}(0) \times \{0\}\big)\,.
\end{align}
So $\X(1) \overset{\mathcal{L}^n}{=} \X^S(1)$, which together with \eqref{eq:its a big sheet with a hole}-\eqref{eq:xs is the lens} gives 
\begin{align}\notag
 \X(j)\overset{\mathcal{L}^n}{=}\xl(j) \quad \forall \, 1\leq j \leq 3\,.   
\end{align}
The proof of Theorem \ref{thm:double bubble} is complete.
\end{proof}

\begin{remark}
    As mentioned in Remark \ref{remark:residue connection}, the exterior minimal surface $\X(2,3)$ is similar to the exterior minimal surfaces arising from large volume isoperimetry in \cite{MagNov22}. The usage of the convex hull property in step one, the gluing onto tilted planes in step two, and the monotonicity of the ``cylindrical energy gap" have natural analogues in \cite[Steps 2-3, proof of Theorem 1.6]{MagNov22} and \cite[Step 2, proof of Theorem 1.1]{MagNov22}.
\end{remark}

\bigskip

\noindent{\bf Acknowledgments.} LB acknowledges support from an NSERC (Canada) Discovery Grant.

\bibliographystyle{abbrv}
\bibliography{references}

\begin{thebibliography}{10}

\bibitem{AlaBroLuWan22}
S.~Alama, L.~Bronsard, X.~Lu, and C.~Wang.
\newblock Core shells and double bubbles in a weighted nonlocal isoperimetric problem, accepted in SIAM Math Anal 2023.

\bibitem{AlaBroLuWanprep}
S.~Alama, L.~Bronsard, X.~Lu, and C.~Wang.
\newblock Decorated phases in a model of tri-block co-polymers, in preparation.

\bibitem{AlaBroVri23}
S.~Alama, L.~Bronsard, and S.~Vriend.
\newblock The standard lens cluster in {$\mathbb{R}^2$} uniquely minimizes relative perimeter.
\newblock {\em Trans. Amer. Math. Soc. Ser. B}, 12:516--535, 2025.

\bibitem{AllAlm81}
W.~K. Allard and F.~J. Almgren, Jr.
\newblock On the radial behavior of minimal surfaces and the uniqueness of their tangent cones.
\newblock {\em Ann. of Math. (2)}, 113(2):215--265, 1981.

\bibitem{Alm76}
F.~J. Almgren, Jr.
\newblock Existence and regularity almost everywhere of solutions to elliptic variational problems with constraints.
\newblock {\em Mem. Amer. Math. Soc.}, 4(165):viii+199, 1976.

\bibitem{AmbBra90}
L.~Ambrosio and A.~Braides.
\newblock Functionals defined on partitions in sets of finite perimeter. {II}. {S}emicontinuity, relaxation and homogenization.
\newblock {\em J. Math. Pures Appl. (9)}, 69(3):307--333, 1990.

\bibitem{AmbFusPal00}
L.~Ambrosio, N.~Fusco, and D.~Pallara.
\newblock {\em Functions of bounded variation and free discontinuity problems}.
\newblock Oxford Mathematical Monographs. The Clarendon Press, Oxford University Press, New York, 2000.

\bibitem{BarCagFus13}
M.~Barchiesi, F.~Cagnetti, and N.~Fusco.
\newblock Stability of the {S}teiner symmetrization of convex sets.
\newblock {\em J. Eur. Math. Soc. (JEMS)}, 15(4):1245--1278, 2013.

\bibitem{CagColDePMag14}
F.~Cagnetti, M.~Colombo, G.~De~Philippis, and F.~Maggi.
\newblock Rigidity of equality cases in {S}teiner's perimeter inequality.
\newblock {\em Anal. PDE}, 7(7):1535--1593, 2014.

\bibitem{Cha95}
C.~C. Chan.
\newblock {\em Structure of the singular set in energy-minimizing partitions and area-minimizing surfaces in {R}('{N})}.
\newblock ProQuest LLC, Ann Arbor, MI, 1995.
\newblock Thesis (Ph.D.)--Stanford University.

\bibitem{ColEdeSpo22}
M.~Colombo, N.~Edelen, and L.~Spolaor.
\newblock The singular set of minimal surfaces near polyhedral cones.
\newblock {\em J. Differential Geom.}, 120(3):411--503, 2022.

\bibitem{MinEde24}
N.~Edelen and P.~Minter.
\newblock Uniqueness of regular tangent cones for immersed stable hypersurfaces.
\newblock {\em Arch. Ration. Mech. Anal.}, 248(6):Paper No. 121, 24, 2024.

\bibitem{EngSpoVel19}
M.~Engelstein, L.~Spolaor, and B.~Velichkov.
\newblock ({L}og-)epiperimetric inequality and regularity over smooth cones for almost area-minimizing currents.
\newblock {\em Geom. Topol.}, 23(1):513--540, 2019.

\bibitem{EvaGar92}
L.~C. Evans and R.~F. Gariepy.
\newblock {\em Measure theory and fine properties of functions}.
\newblock Studies in Advanced Mathematics. CRC Press, Boca Raton, FL, 1992.

\bibitem{FoiAlfBroHodZim93}
J.~Foisy, M.~Alfaro, J.~Brock, N.~Hodges, and J.~Zimba.
\newblock The standard double soap bubble in {${\bf R}^2$} uniquely minimizes perimeter.
\newblock {\em Pacific J. Math.}, 159(1):47--59, 1993.

\bibitem{HutMorRitRos02}
M.~Hutchings, F.~Morgan, M.~Ritor\'{e}, and A.~Ros.
\newblock Proof of the double bubble conjecture.
\newblock {\em Ann. of Math. (2)}, 155(2):459--489, 2002.

\bibitem{LawMor94}
G.~Lawlor and F.~Morgan.
\newblock Paired calibrations applied to soap films, immiscible fluids, and surfaces or networks minimizing other norms.
\newblock {\em Pacific J. Math.}, 166(1):55--83, 1994.

\bibitem{Law14}
G.~R. Lawlor.
\newblock Double bubbles for immiscible fluids in {$\mathbb{R}^n$}.
\newblock {\em J. Geom. Anal.}, 24(1):190--204, 2014.

\bibitem{Leo01}
G.~P. Leonardi.
\newblock Infiltrations in immiscible fluids systems.
\newblock {\em Proc. Roy. Soc. Edinburgh Sect. A}, 131(2):425--436, 2001.

\bibitem{Mag12}
F.~Maggi.
\newblock {\em Sets of Finite Perimeter and Geometric Variational Problems: An Introduction to Geometric Measure Theory}.
\newblock Cambridge Studies in Advanced Mathematics. Cambridge University Press, 2012.

\bibitem{MagNov22}
F.~Maggi and M.~Novack.
\newblock Isoperimetric residues and a mesoscale flatness criterion for hypersurfaces with bounded mean curvature, 2022.

\bibitem{MilNee22}
E.~Milman and J.~Neeman.
\newblock The structure of isoperimetric bubbles on $\mathbb{R}^n$ and $\mathbb{S}^n$, 2022.

\bibitem{MilNee23}
E.~Milman and J.~Neeman.
\newblock Plateau bubbles and the quintuple bubble theorem on $\mathbb{S}^n$, 2023.

\bibitem{Mor98}
F.~Morgan.
\newblock Immiscible fluid clusters in {${\bf R}^2$} and {${\bf R}^3$}.
\newblock {\em Michigan Math. J.}, 45(3):441--450, 1998.

\bibitem{Mor16}
F.~Morgan.
\newblock {\em Geometric measure theory}.
\newblock Elsevier/Academic Press, Amsterdam, fifth edition, 2016.
\newblock A beginner's guide, Illustrated by James F. Bredt.

\bibitem{NovPaoTor23}
M.~Novaga, E.~Paolini, and V.~M. Tortorelli.
\newblock Locally isoperimetric partitions.
\newblock {\em Trans. Amer. Math. Soc.}, 378(4):2517--2548, 2025.

\bibitem{PaoTor20}
E.~Paolini and V.~M. Tortorelli.
\newblock The quadruple planar bubble enclosing equal areas is symmetric.
\newblock {\em Calc. Var. Partial Differential Equations}, 59(1):Paper No. 20, 9, 2020.

\bibitem{Rei08}
B.~W. Reichardt.
\newblock Proof of the double bubble conjecture in {$\mathbb{R}^n$}.
\newblock {\em J. Geom. Anal.}, 18(1):172--191, 2008.

\bibitem{ReiHeiLaiSpi03}
B.~W. Reichardt, C.~Heilmann, Y.~Y. Lai, and A.~Spielman.
\newblock Proof of the double bubble conjecture in {${\bf R}^4$} and certain higher dimensional cases.
\newblock {\em Pacific J. Math.}, 208(2):347--366, 2003.

\bibitem{Sch83}
R.~M. Schoen.
\newblock Uniqueness, symmetry, and embeddedness of minimal surfaces.
\newblock {\em J. Differential Geom.}, 18(4):791--809 (1984), 1983.

\bibitem{Sim83a}
L.~Simon.
\newblock Asymptotics for a class of nonlinear evolution equations, with applications to geometric problems.
\newblock {\em Ann. of Math. (2)}, 118(3):525--571, 1983.

\bibitem{Sim83}
L.~Simon.
\newblock {\em Lectures on geometric measure theory}, volume~3 of {\em Proceedings of the Centre for Mathematical Analysis, Australian National University}.
\newblock Australian National University, Centre for Mathematical Analysis, Canberra, 1983.

\bibitem{Sim85a}
L.~Simon.
\newblock Isolated singularities of extrema of geometric variational problems.
\newblock In {\em Harmonic mappings and minimal immersions ({M}ontecatini, 1984)}, volume 1161 of {\em Lecture Notes in Math.}, pages 206--277. Springer, Berlin, 1985.

\bibitem{Sim85}
L.~Simon.
\newblock Asymptotic behaviour of minimal graphs over exterior domains.
\newblock {\em Ann. Inst. H. Poincar\'{e} Anal. Non Lin\'{e}aire}, 4(3):231--242, 1987.

\bibitem{SteZie94}
P.~Sternberg and W.~P. Zeimer.
\newblock Local minimisers of a three-phase partition problem with triple junctions.
\newblock {\em Proc. Roy. Soc. Edinburgh Sect. A}, 124(6):1059--1073, 1994.

\bibitem{SulMor96}
J.~M. Sullivan and F.~Morgan.
\newblock Open problems in soap bubble geometry.
\newblock {\em Internat. J. Math.}, 7(6):833--842, 1996.

\bibitem{Vol67}
A.~I. Vol'pert.
\newblock Spaces {${\rm BV}$} and quasilinear equations.
\newblock {\em Mat. Sb. (N.S.)}, 73(115):255--302, 1967.

\bibitem{Whi86}
B.~White.
\newblock Regularity of the singular sets in immiscible fluid interfaces and solutions to other {P}lateau-type problems.
\newblock In {\em Miniconference on geometry and partial differential equations ({C}anberra, 1985)}, volume~10 of {\em Proc. Centre Math. Anal. Austral. Nat. Univ.}, pages 244--249. Austral. Nat. Univ., Canberra, 1986.

\bibitem{Whi96}
B.~White.
\newblock Existence of least-energy configurations of immiscible fluids.
\newblock {\em J. Geom. Anal.}, 6(1):151--161, 1996.

\bibitem{Wic04}
W.~Wichiramala.
\newblock Proof of the planar triple bubble conjecture.
\newblock {\em J. Reine Angew. Math.}, 567:1--49, 2004.

\end{thebibliography}
\end{document}